\documentclass{article}
\usepackage{commands}
\usepackage{layout}

\title{Optimal Control of Nonautonomous Saddle Node Bifurcations}
\author{Christopher Beekmann\thanks{Department of Mathematics and Computer Science, Eindhoven University of Technology, De Groene Loper 5, 5612 AE Eindhoven, The Netherlands.}\ \thanks{c.a.j.beekmann@tue.nl.}  , \  Kerstin Lux-Gottschalk\thanks{Department of Mathematics and Computer Science, Eindhoven University of Technology, De Groene Loper 5, 5612 AE Eindhoven, The Netherlands.}\ \thanks{k.m.lux@tue.nl.}}

\begin{document}

\date{}
\maketitle

\begin{center}
\section*{Abstract}
\end{center}
    Nonautonomous saddle node bifurcations commonly appear in mathematical models, however, an optimal control theory tailored to such systems is not yet well established. We present conceptual work on the nonautonomous saddle node normal form. We develop an approach that reformulates the optimal control problem into an unconstrained minimization problem by exploiting a connection between bifurcation and rate-induced overshoots. The key component is a rate function that assigns the critical rate to every nonautonomous forcing. We show that in the saddle node normal form, the critical rate is unique, and hence the rate function is well defined. 
    While this reformulation is valid in any case, the rate function is often unknown. To resolve this, we additionally present an approximation of the rate function that leads to a closed form expression of the approximated control.
    This makes the approach appealing in situations, in which we need repeated solves of the control problem.

\clearpage 

\section{Introduction} \label{sec::introduction}
Differential equations are powerful tools to model the evolution of complex real-world systems. Oftentimes, these dynamics are highly nonlinear and interesting phenomena can appear such as tipping behaviour of the system, i.e.~sudden, drastic and often irreversible changes in a systems dynamics upon small changes in its input. These transitions are also called critical transitions and commonly occur in climate models (\cite{HumanSystems23, TippingElementsinEarthsclimateSystem, inverseSquareLaw19}), neuronal models (\cite{Ashwin_neuroscience2016, avitabile22, kirillov15, Baesens_2013}), ecological models (\cite{HumanSystems23, WieczorekZombieFires, liquidcrystals13}) and even laser systems (\cite{semiconductorlasers}). In climate models, tipping can for example correspond to a collapse of an ocean circulation such as the Atlantic Meridional Overturning Circulation. In ecological models, the collapse of a population can be induced by crossing a tipping point and in neuronal models, this mechanism is often used to model the creation of an action potential.
\\
A rigorous mathematical theory for tipping phenomena has been developed (\cite{universal_explosive_phenomena}) and recently a lot of research has addressed different types of tipping: bifurcation-, rate-, and noise-induced tipping, the construction and analysis of early warning signs for tipping points as well as the implications of overshooting tipping thresholds (see e.g.\ \cite{Lux24, ahswin12Tippinginopensystems, Ashwin19_RateInducedThresholdForAMOC, Ashwin2017, rat_induced_applications_ecology, ditlevsen, non_markovian_scaling_laws, DUENAS2023138, boers26}).
\\
\\
Very often critical transitions can be explained by the occurrence of a (nonautonomous) saddle node bifurcation (e.g.~\cite{HumanSystems23, inverseSquareLaw19, Ashwin_neuroscience2016, avitabile22, WieczorekZombieFires, Lux24, Ashwin19_RateInducedThresholdForAMOC, Ashwin2017, rat_induced_applications_ecology, ditlevsen}), i.e.~the collision and subsequent annihilation of an attractor-repeller pair. Preventing this bifurcation is an important area of active research and of utmost importance for many applications such as climate systems (\cite{global_tipping_point_report}) or neuronal systems (\cite{Rajabi25, Wilson22, Li_neuron}). 
\\
\\
However, whereas previous research has already addressed tipping thresholds and overshoot scenarios, much less
has been done in terms of how to actually control such saddle node systems in a cost efficient way to
prevent critical transitions.

The authors of \cite{Rajabi25, Wilson22, Li_neuron} solve their optimal control problems using powerful tools like Euler-Lagrange or Hamilton-Jacobi-Bellman equations, or by employing the Pontryagin maximum principle (\cite{optimalControl}). Other approaches include discretizing the dynamics and parsing them as constraints to a nonlinear problem solver, e.g.~by using CasADi (\cite{Casadi}). However, all these methods tend to ignore the underlying saddle node structure that is present in the models, and as a result become increasingly harder to solve,
the more complex the model becomes.
We see great potential to improve this scaling behaviour,
since the underlying saddle node bifurcation structure remains the same, even in more complicated, higher-
dimensional models. Hence, an optimal control theory building on this structure has the potential to scale
better with the complexity of the surrounding dynamics.
\\
\\
In this paper, we make a first step towards such an optimal control theory, by developing it
for the simple case of the nonautonomous saddle node bifurcation normal form. The dynamics of this equation have been
studied extensively, for example in \cite{Longo_2024CriticalTransitionsForCaratheodory, QuadraticOde21, PiecewiseUniformlyContinuous24, Anagnostopoulou2012}.
The goal is to develop a formalism that provides more flexibility and better generalization to complex models by leveraging the underlying generic bifurcation structure. In particular, we show that one can reformulate the optimal control problem to an unconstrained minimization problem using information about the systems dynamics.  
\\
\\
In Section \ref{sec::problem_setup}, we introduce the mathematical problem setup by formulating the constrained optimal control problem \eqref{prob::OCP} subject to the dynamics of a nonautonomous saddle node bifurcation normal form. Further, we already showcase the reformulated problem \eqref{prob::RCP}. 
In Section \ref{sec::nonautonomous_fold}, we characterise the structural properties of the nonautonomous saddle node normal form that are needed to reformulate \eqref{prob::OCP}.
In particular, in Section \ref{sec::nonautonomoustheory}, we reiterate necessary concepts of nonautonomous dynamics, while in Section \ref{sec::saddlenode} some important results about the bifurcation behaviour of nonautonomous saddle node bifurcations, mostly based on \cite{Longo_2024CriticalTransitionsForCaratheodory,PiecewiseUniformlyContinuous24,QuadraticOde21}, are summarized and put in the context of the specific case of the nonautonomous saddle node bifurcation normal form. Finally, in Section \ref{sec::ratefunction}, we show the existence of a rate function that, for any nonautonomous input, distinguishes the dynamical possibilities of tipping and stabilizing the system.
In Section \ref{sec::reformOCproblem}, we derive the unconstrained reformulation of the \eqref{prob::OCP} based on the newly defined rate function. This reformulation is valid in any case, however, the rate function is often unknown. Therefore, in Section \ref{sec::analyticalsol}, by approximating the rate function, we show how to obtain a closed form expression for the optimal control of the reformulated \eqref{prob::OCP}. This makes an approximated control quickly and readily available, which is particularly appealing in the case where repeated solves of the \eqref{prob::OCP} are necessary (e.g.\ in case of uncertain parameters). In several numerical experiments inspired by climate and neurosciences applications, we compare the obtained approximated control with the numerical solution obtained by passing the problem as a constrained nonlinear optimization problem to IPOPT (\cite{ipopt}), a software package to solve nonlinear constrained optimization problems.

\subsection*{Notation}
Here, we summarize some notational conventions that are frequently used throughout this paper:
\begin{itemize}
    \item We write $l$ for the Lebesgue measure on $\R$, and $l$-a.a./$l$-a.e. for Lebesgue almost all and Lebesgue almost everywhere
    \item $\R^+ \coloneqq [0,\infty)$;
    \item Let $\Omega\subset\R$, we write $\SET(\Omega) \coloneqq L^1(\Omega) \cap L^\infty(\Omega)$. Note that $h\in \SET(\Omega)$ implies $h\in L^p(\Omega)$ for all $p\in [1,\infty]$;
    \item Let $\Omega\subset\R$, $BV(\Omega)$ is the space of functions with bounded variation on $\Omega$;
    \item Let $\Omega\subset\R$, $C_{bu}(\Omega)$ is the space of bounded and uniformly continuous functions on $\Omega$;
    \item Let $\Omega \subset \R$ and $f:\R\to\R$. We denote for $p\in[1,\infty)$
        $$
            ||f||_{p;\Omega} \coloneqq \left(\int_\Omega |f(x)|^p \dd x \right)^{\frac{1}{p}}
        $$
    and further
    $$
        ||f||_{\infty;\Omega} \coloneqq \esssup_{x\in\Omega} |f(x)|;
    $$
    \item $\indicator_{[t_1,t_2]}(t)$ denotes the indicator function, i.e.~
    \begin{align*}
        \indicator_{[t_1,t_2]}(t) = \left\{\begin{array}{cc}
           1  & t_1\leq t\leq t_2 \\
           0  & \text{otherwise;}
        \end{array}\right. 
    \end{align*}
    \item For a differentiable function $f: \R \times \R \to \R$, we write $f_t$ for its partial derivative with respect to the first component and $f_x$ as the partial derivative with respect to the second component.
    \item Whenever we want to emphasize the dependence of a solution to an ordinary differential equation (ODE) $x$ on parameters $p_1,p_2,...$, we write $x_{p_1,p_2,...}$. We do the same for equations, for example, if we want to highlight the dependence of equation (A.B) on $\lambda$ we write (A.B)$_\lambda$.
\end{itemize}

\clearpage

\section{Problem setup}
\label{sec::problem_setup}
Let $\lambda >0$. We assume an autonomous saddle node normal form $\dot{x} = \lambda -x^2$ that is destabilized by an external forcing $\alpha(t)\in\SET(\R^+) \coloneqq L^1(\R^+) \cap L^\infty(\R^+)$, resulting in the nonautonomous saddle node normal form
\begin{align}\label{prob::sec1uncontrolled}
    \dot{x}(t) = \lambda - \alpha(t) -x^2(t) \qquad x(0)= \mu^x >0.
\end{align}
This forcing leads to tipping (\cite{ahswin12Tippinginopensystems}), which in the saddle node normal form materializes as vanishing attractors and divergence of trajectories to negative infinity in finite time. We assume that tipping is undesirable for the saddle node system. This motivates the introduction of a control $u\in\SET(\R^+)$ into the system as a preventive measure. Hence, one obtains the \textit{controlled equation} 
\begin{align}\label{prob::sec1controlled}
    \dot{x}(t) = \lambda - \alpha(t) + u(t) -x^2(t).
\end{align}
Further, the control comes at a cost which is to be minimized. In the following, we aim to minimize the squared $L^2$ norm of $u$, i.e.~$\int_0^\infty u^2(t) \dd t$. Note, that we do this solely for simplicity and throughout Sections \ref{sec::problem_setup}-\ref{sec::reformOCproblem} this cost function could be replaced by any other that only depends on $u$, as long as it is reasonable enough such that solutions to the resulting optimal control problem still exist. We obtain the optimal control problem

\begin{problem}[Optimal Control Problem]
    Let $\lambda >0$ and $\alpha\in\SET(\R^+)$ such that the uncontrolled equations \eqref{prob::sec1uncontrolled} satisfies: $x \to -\infty$ in finite time. We consider the optimal control problem
    \begin{align}\tag{\textbf{OCP}}\label{prob::OCP}
        \min_{u \in \SET(\R^+)} \int_0^\infty u^2(t) \dd t \qquad s.t.~
        \left\{ 
            \begin{array}{llll}
            &\dot{x}(t) = \lambda - \alpha(t) + u(t) - x^2(t)  \\
            & x(0) = \mu^x >0 \\
            &\exists \  M \in \N \ s.t.~\ x(t) \geq -M \quad \forall \  t \in \R.
            \end{array}
        \right.
    \end{align}
\end{problem}
Here, the third constraint ensures that no bifurcation induced tipping can occur.
Note, since $\alpha$ is fixed we can define the \textit{effective forcing} $g\coloneqq \alpha - u$ and equivalently minimize $\int_0^\infty [\alpha(t) -g(t)]^2 \dd t$ w.r.t.~$g$ using the same constraints.  
Our approach consists of constructing a rate function $\Kappa(\lambda, \alpha ,g)$ that time-rescales $g$ such that the optimal control problem \eqref{prob::OCP} (see below) can alternatively be formulated as the unconstrained minimization problem
\begin{align}\tag{\textbf{RCP}}\label{prob::sec1::unofficialreform}
    \min_{g \in \SET(\R^+)} \int_0^\infty [\alpha(t) - g(t\Kappa(\lambda,\alpha,g))]^2 \dd t.
\end{align}
An advantage of this reformulated control problem \eqref{prob::sec1::unofficialreform} is that all dynamical properties of the system are contained in $\Kappa$ and therefore are decoupled from the minimization problem. This enables the use of powerful standard unconstrained optimization methods such as Newton-like methods. In contrast, \eqref{prob::OCP} is a minimization problem with constraints on the dynamical evolution. 
However, this comes at the cost that the rate function $\Kappa$ is not a priori known and needs to be calculated for every effective forcing $g$. Approaches to do so numerically are, for example, the power series in \cite{Kuehn2022} (with adaptions), the time compactification \cite{Tipping_thresholdsWieczorek_2023} or simply a Taylor expansion. Another benefit of the reformulation is that \eqref{prob::sec1::unofficialreform} naturally allows for a balance between computational efficiency and precision, without loss of numerical accuracy, by changing the way $\Kappa$ is computed. In particular, it is possible to find simple expressions that approximate $\Kappa$ and nonetheless keep trajectories bounded. One such expression is presented in Section \ref{sec::analyticalsol}.
\\\\
Note that the rate function $\Kappa$ is closely connected to overshooting (e.g.~\cite{inverseSquareLaw19}) and to a mixture of rate-induced tipping, a phenomenon that has been intensively studied in recent literature (e.g.~\cite{Tipping_thresholdsWieczorek_2023, Longo_2024CriticalTransitionsForCaratheodory, QuadraticOde21, PiecewiseUniformlyContinuous24, Ashwin2017, ahswin12Tippinginopensystems, Ashwin19_RateInducedThresholdForAMOC}), and size-induced tipping (e.g~\cite{Longo_2024CriticalTransitionsForCaratheodory}). In fact, it assigns every forcing exactly the critical rate. In particular, our approach to reformulating the optimal control problem \eqref{prob::OCP} can be thought of as reformulating a bifurcation-induced tipping problem (the system crosses the bifurcation point) to a rate-induced overshooting problem.

\clearpage

\section{Nonautonomous saddle node bifurcation} \label{sec::nonautonomous_fold}
The study of nonautonomous saddle node bifurcation (as for example done in \cite{QuadraticOde21}) relies on the analysis of the corresponding skew-product flow. As such, the base flow is often assumed to be invertible, which in particular means that \eqref{prob::sec1controlled} must be defined for all $t\in\R$. This contrasts \eqref{prob::OCP}, where \eqref{prob::sec1uncontrolled} is a priori only defined for $t\geq 0$. The details of how this gap is bridged are discussed in Section \ref{sec::reformOCproblem}, for now we simply note that the dynamics of \eqref{prob::sec1controlled} must be extended to $(-\infty,0)$ and we do so by considering

\begin{center}    
\begin{tcolorbox}[width=0.9\linewidth, colback=gray!10, colframe=gray!40, boxrule=0.5pt, arc=3mm, left=2mm, right=2mm, top=0mm, bottom=3mm, halign=center, valign=center]
\begin{align}\tag{x}\label{eqq::x}
    \Dot{x}(t) = \lambda - h(t;\kappa) - x(t)^2,
\end{align}
\end{tcolorbox}
\end{center}

where $\lambda, \kappa >0$ and 
\begin{align}\label{eqq::defh}
    h(t;\kappa) \coloneqq \left\{\begin{array}{cc}
        \alpha(t) & t<0 \\
         g(t\kappa) & t\geq0 
    \end{array}\right.
\end{align}
for some $\alpha\in \SET(\R)= L^1(\R)\cap L^\infty(\R), \ g \in \SET(\R^+)$. Here, $g$ incorporates the control by $g = \alpha - u$. Furthermore, one can intuitively think about this as an autonomous fold in equilibrium that has been destabilized on $(-\infty,0)$ by a forcing $\alpha$ and arrives at $\mu^x$ at time $t=0$.
Note that $h(\cdot;\kappa)\in \SET(\R)$ for any $\kappa>0$. We show later that by choosing $\kappa>0$ correctly, one can ensure the boundedness of the attractor of \eqref{eqq::x} for any $g\in \SET(\R^+)$. 
\\\\
This section is devoted to the analysis of \eqref{eqq::x} and in particular the behaviour of the pullback attractor upon variation of $\kappa$. We rely on the works \cite{Longo_2024CriticalTransitionsForCaratheodory, QuadraticOde21, PiecewiseUniformlyContinuous24} that establish that the pullback attractor of \eqref{eqq::x} is in one of three Cases:
\begin{itemize}
    \item \textbf{Case A}: It is hyperbolic (cf.~Section \ref{sec::nonautonomoustheory}) and connects from $\sqrt{\lambda}$ at $-\infty$ to $\sqrt{\lambda}$ at $\infty$;
    \item \textbf{Case B}: It is not hyperbolic but bounded and connects from $\sqrt{\lambda}$ at $-\infty$ to $-\sqrt{\lambda}$ at $\infty$;
    \item \textbf{Case C}: It is not bounded and exists only on an open halfline $(-\infty,t_f)$.
\end{itemize}

Section 2 is structured as follows: In Section \ref{sec::nonautonomoustheory} we reiterate some necessary basics of nonautonomous dynamics, such as exponential dichotomies and persistence of hyperbolic solutions under perturbations. We stick to the resources \cite{Longo_2024CriticalTransitionsForCaratheodory, QuadraticOde21, PiecewiseUniformlyContinuous24} as they are best adapted to our case, however, most results in that section can also be found in e.g.~\cite{NonautonomousDynamicalSystems} (and references therein). Section \ref{sec::saddlenode} is dedicated to summarizing the results of \cite{Longo_2024CriticalTransitionsForCaratheodory, QuadraticOde21, PiecewiseUniformlyContinuous24} regarding scalar quadratic or coercive and concave ordinary differential equations and adjusting their results to our needs.

Finally, in Section \ref{sec::ratefunction}, we analyse how \eqref{eqq::x} changes with variation of $\kappa$. We establish that for small $\kappa>0$, \eqref{eqq::x} is in Case C, for large $\kappa>0$ it is in Case A and there is a unique $\kappa_{c} >0$, for which \eqref{eqq::x} is in Case B. In particular, there is a unique transversal transition from A to C under variation of $\kappa$. Transversal in this context means that the transition interval, where the dynamics are in Case B, is a singleton. This result is, to the best of our knowledge, new in the literature and is a key component of the reformulation in Section \ref{sec::reformOCproblem}. This then leads to the definition of a rate function $\Kappa$ assigning every $h$ as in \eqref{eqq::defh} a rate such that \eqref{eqq::x}$_{h(\cdot;\Kappa)}$ is in Case B.

In preparation for Section \ref{sec::ratefunction}, we introduce a new system (motivated by \cite{QuadraticOde21}), obtained by the coordinate transformation $y(t) \coloneqq x(t) + \int_{-\infty}^t h(s;\kappa) \dd s$, which leads to the ODE

\begin{center}    
\begin{tcolorbox}[width=0.9\linewidth, colback=gray!10, colframe=gray!40, boxrule=0.5pt, arc=3mm, left=2mm, right=2mm, top=0mm, bottom=3mm, halign=center, valign=center]

\begin{align}\tag{y}\label{eqq::y}
    \Dot{y}(t) = \lambda - \left( y(t) - \int_{-\infty}^t h(s;\kappa) \dd s\right)^2.
\end{align}

\end{tcolorbox}
\end{center}

Both \eqref{eqq::x} and \eqref{eqq::y} appear frequently throughout this paper. We also refer to \eqref{eqq::x} as the $x$-system and \eqref{eqq::y} as the $y$-system. We denote by $t\mapsto x(t,s,x_0)$ the maximal solution of \eqref{eqq::x} satisfying $x(s,s,x_0) = x_0$. Equivalently, the maximal solution of \eqref{eqq::y} satisfying $y(s,s,x_0)=x_0$ is denoted by $t\mapsto y(t,s,x_0)$. Note that, given a set of parameters $\{\lambda,\kappa,h\}$, the $x$-system is in Case A if and only if the $y$-system is in Case A (see \autoref{pro::xysamecase}). The same holds for Cases B and C. To simplify the notation, we define
\begin{align*}
    H_\kappa(t) = \int_{-\infty}^t h(s;\kappa) \dd s= \left\{\begin{array}{cc}
        \int_{-\infty}^t \alpha(s) \dd s &  t\leq 0 \\
        \int_{-\infty}^0 \alpha(s) \dd s + \int_0^t g(s\kappa) \dd s = \int_{-\infty}^0 \alpha(s) \dd s + \frac{1}{\kappa}\int_0^{t\kappa} g(s) \dd s & t>0,
    \end{array}\right.
\end{align*}

where the second equality is achieved by the time-rescaling $l=s\kappa$. Thus, \eqref{eqq::y} becomes \\
$\Dot{y}(t) = \lambda -(y(t)-H_\kappa(t))^2$. We further define
\begin{align}\label{eqq::def_delta_limits}
    \delta^- \coloneqq \lim_{t\to-\infty} H_\kappa(t) = 0 \qquad \delta_\kappa^+ \coloneqq \lim_{t\to\infty} H_\kappa(t).
\end{align}
Note that $\delta_-$ does not depend on $\kappa$ and $\delta_\kappa^+$ exists since $h(\cdot;\kappa)\in L^1$ for $\kappa > 0$. \\
Transformation to the $y$-system has three major reasons. First, for the $x$-system the moving equilibria (the equilibria of the time frozen system $\dot{x}(t) = \lambda - h(\Bar{t};\kappa)- x^2(t)$) for $\Bar{t}\in\R$ vanish whenever $h(t;\kappa)>\lambda$. However, the $y$-system always has moving equilibria for any choice of $h$. This enables approximation techniques that rely on characteristics of the time frozen system.
Second, $h$ is potentially of low regularity, while $H_\kappa$ is at least continuous and of bounded variation, as it is an integral of a $L^1$ function. We later prove that this leads to $H_\kappa(t)$ depending uniformly continuously on $\kappa$.
Third, the $y$-system is of the form $\Dot{y} = f(t, y-H_\kappa(t))$, and hence can be thought of as a transition equation between the limit equations
$$
    \Dot{y}_- = f(t, y_-\delta^-) \qquad \text{and} \qquad \Dot{y}_+ = f(t, y_+ - \delta_\kappa^+).
$$
This fits neatly in the framework considered in \cite[Chap.~4]{Longo_2024CriticalTransitionsForCaratheodory}.

\begin{remark}
    In the following, we study rate-induced tipping of the system \eqref{eqq::x}, i.e.~changes in qualitative behaviour of the attractor upon variation of $\kappa$. For the y-system \eqref{eqq::y}, the prefactor of $1/\kappa$ in front of the integral means that we actually analyse a combination of rate- and size-induced tipping (see \cite{Longo_2024CriticalTransitionsForCaratheodory} for definitions). 
    This balance is the main difference to most of the literature mentioned above and also one of the major difficulties when implementing numerical schemes to calculate critical rates such as in \cite{Kuehn2022, Tipping_thresholdsWieczorek_2023}. 
\end{remark}

\subsection{Basic notions of nonautonomous dynamics}\label{sec::nonautonomoustheory}
Let $f:\R^2 \to \R$ and consider a general nonautonomous scalar ODE
\begin{align}\label{eqq::general}
    \Dot{x} = f(t,x).
\end{align} 
\begin{conditions}\label{cond::1}
    We say that \autoref{cond::1} is satisfied for a general scalar ODE \eqref{eqq::general} if the following holds:
    \begin{enumerate}[label=(\roman*)]
        \item $f$ is Borel measurable.
        \item For all $j\in\N$, there exists $m_j \in \R$ such that, $l$-a.a., $\sup_{x\in[-j,j]} |f(t,x)| \leq m_j$.
        \item The map $x\mapsto f(t,x)$ is $C^1$ $l$-a.a.
        \item For all $j\in \N$, there exists $l_j\in\R$ such that, $l$-a.a.,
        $$
        \sup_{x_1,x_2\in [-j,j]} \frac{|f(t,x_2) - f(t,x_1)|}{|x_2-x_1|} \leq l_j \qquad \sup_{x_1,x_2\in [-j,j]} \frac{|f_x(t,x_2) - f_x(t,x_1)|}{|x_2-x_1|} \leq l_j.
        $$
    \end{enumerate}
\end{conditions}

\autoref{cond::1} ensures the existence of unique solutions (see \cite{odetheory}) and is also crucial for the rest of this section. Note that both the $x$-system and the $y$-system satisfy \autoref{cond::1} (see Appendix \autoref{app::prop::cond_allsystems}). For the rest of Section \ref{sec::nonautonomous_fold}, we always assume \autoref{cond::1} to be in place for any right hand side $f(t,x)$ we consider. 

\begin{definition}
We say that a bounded solution $b:\R\to\R$ of \eqref{eqq::general} is \textit{hyperbolic} if the corresponding variational equation $\Dot{z} = f_x(t,b(t))z$ admits an exponential dichotomy on $\R$, that is, there exists $K\geq1,$ $\beta>0$ such that one of the following two holds
\begin{align}
    & \exp \left(\int_s^t f_x(l,b(l)) \dd l \right) \leq K e^{-\beta (t-s)} && \qquad t\geq s \label{eqq::hyperbolicattracting}\\
    &\exp \left( \int_s^t f_x(l,b(l)) \dd l \right) \leq K e^{\beta (t-s)} && \qquad t\leq s. \label{eqq::hyperbolicrepelling}
\end{align}
In case of \eqref{eqq::hyperbolicattracting} we say that the hyperbolic solution $b$ is (locally) \textit{attractive} and in case of \eqref{eqq::hyperbolicrepelling} we say that the hyperbolic solution $b$ is (locally) \textit{repelling}. In both cases $(K,\beta)$ is called a dichotomy constant pair (\cite{Longo_2024CriticalTransitionsForCaratheodory, dichotomiesInStabilityTheory}). 
\end{definition}

The next result shows that hyperbolic solutions persist under small perturbations of $f$.

\begin{proposition}\label{pro::robustnesshyperbolic}
Let $\alpha,\Tilde{\alpha} \in \SET(\R)$ and $p\in C^2(\R)$. Consider the equations
\begin{align}
    & \dot{x}(t) = \alpha(t) + p(x) \label{eqq::prop::xhyperbolic}\\
    &\dot{z}(t) = \Tilde{\alpha}(t) + p(z) \label{eqq::prop::zhyperbolic}
\end{align}
and let $b^x$ be an attractive (repulsive) hyperbolic solution of \eqref{eqq::prop::xhyperbolic}. Then there exists $\delta^*>0$ such that for any $\delta \in (0,\delta^*)$, if $||\alpha - \Tilde{\alpha}||_{\infty;\R} < \delta $, then \eqref{eqq::prop::zhyperbolic} also has an attractive (repulsive) hyperbolic solution $b^z$.
\end{proposition}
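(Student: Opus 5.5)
We prove the proposition by a fixed-point argument. We look for $b^z = b^x + w$ with $w$ bounded and small, and treat the attractive case; the repulsive case follows by time reversal $t\mapsto -t$, which swaps \eqref{eqq::hyperbolicattracting} and \eqref{eqq::hyperbolicrepelling}.

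Subtracting \eqref{eqq::prop::xhyperbolic} from \eqref{eqq::prop::zhyperbolic} gives the equation for $w$:
\begin{align*}
\dot w(t) = a(t) w(t) + \big(\Tilde{\alpha}(t)-\alpha(t)\big) + N(t,w(t)),
\end{align*}
where
\begin{align*}
a(t)\coloneqq p'(b^x(t)), \qquad N(t,w)\coloneqq p(b^x(t)+w)-p(b^x(t))-p'(b^x(t))w .
\end{align*}
Since $b^x$ is bounded, say $|b^x|\le R$, and $p\in C^2$, Taylor's theorem gives $|N(t,w)|\le C|w|^2$. Likewise $|N(t,w_1)-N(t,w_2)|\le C(|w_1|+|w_2|)|w_1-w_2|$ for $|w_i|\le 1$, with $C=\sup_{|x|\le R+1}|p''|$.

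The variational equation $\dot z = a(t)z$ has the attractive dichotomy \eqref{eqq::hyperbolicattracting} with constants $(K,\beta)$. For every $f\in L^\infty(\R)$ the equation $\dot w = a w + f$ therefore has exactly one bounded solution,
\begin{align*}
(\mathcal{L}f)(t)=\int_{-\infty}^t e^{\int_s^t a(l)\dd l} f(s)\dd s ,
\end{align*}
and it satisfies $\|\mathcal{L}f\|_{\infty}\le (K/\beta)\|f\|_\infty$. On the ball $B_r=\{w\in L^\infty\cap C(\R): \|w\|_\infty\le r\}$ we define
\begin{align*}
T(w)=\mathcal{L}\big(\Tilde{\alpha}-\alpha+N(\cdot,w)\big).
\end{align*}
Then $\|T(w)\|_\infty\le (K/\beta)(\delta+Cr^2)$, and $T$ has Lipschitz constant $(K/\beta)2Cr$ on $B_r$. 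Choosing $r$ with $2CrK/\beta\le 1/2$ and then $\delta^*$ with $(K/\beta)\delta^*\le r/2$ makes $T$ a contraction of $B_r$ into itself. Its fixed point $w$ yields a bounded solution $b^z=b^x+w$ of \eqref{eqq::prop::zhyperbolic}, which holds in the Carathéodory sense since $\alpha,\Tilde\alpha$ are only measurable.

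It remains to show that $b^z$ is hyperbolic attractive, and this is the only real obstacle. The linearization of \eqref{eqq::prop::zhyperbolic} along $b^z$ has coefficient $p'(b^z(t))$, and
\begin{align*}
|p'(b^z(t))-a(t)|\le C\|w\|_\infty\le Cr .
\end{align*}
Hence
\begin{align*}
\exp\Big(\int_s^t p'(b^z(l))\dd l\Big)\le K e^{-(\beta-Cr)(t-s)}, \qquad t\ge s .
\end{align*}
This is where the scalar setting pays off: exponential dichotomies of scalar equations are explicit exponentials, so roughness under small $L^\infty$ perturbations of the coefficient is immediate. Shrinking $r$, and hence $\delta^*$, so that $Cr<\beta/2$ gives the dichotomy pair $(K,\beta/2)$, so $b^z$ is attractive hyperbolic.

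The repulsive case uses $\mathcal{L}f(t)=-\int_t^\infty e^{\int_s^t a}f(s)\dd s$ instead; the same estimates then apply verbatim.
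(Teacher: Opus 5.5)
Your argument is correct, but it is not the paper's route. The paper gives no proof of its own. It invokes the general persistence result \cite[Prop.~3.3]{Longo_2024CriticalTransitionsForCaratheodory}, checks that both right-hand sides satisfy \autoref{cond::1}, and notes that the remaining hypotheses there hold trivially because the state derivatives of the two equations coincide (both equal $p'$).

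You instead give a self-contained Lyapunov--Perron argument that exploits the two special features of this setting:
\begin{itemize}
\item[] \emph{Additive perturbation.} Because the perturbation enters only as an inhomogeneity, $w=b^z-b^x$ solves a fixed linear equation $\dot w=p'(b^x(t))w$ plus a small forcing and a quadratic remainder. Its attractive dichotomy gives the bounded-solution operator $\mathcal L$ with $\|\mathcal L\|\le K/\beta$.
\item[] \emph{Scalar equation.} Roughness of the dichotomy along $b^z$ collapses to the one-line estimate $\exp\big(\int_s^t p'(b^z(l))\,\dd l\big)\le K e^{-(\beta-Cr)(t-s)}$ for $t\ge s$.
\end{itemize}

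What the citation buys is brevity and generality: it covers Carathéodory right-hand sides in general, not just this additive structure. What your route buys is transparency and quantitative information the citation leaves invisible:
\begin{itemize}
\item[] an explicit $\delta^*$ in terms of $K$, $\beta$ and $\sup|p''|$ on the $(R+1)$-neighbourhood of $b^x$;
\item[] the bound $\|b^z-b^x\|_{\infty;\R}\le \tfrac43 (K/\beta)\|\tilde\alpha-\alpha\|_{\infty;\R}$;
\item[] the explicit dichotomy pair $(K,\beta/2)$ for $b^z$;
\item[] uniqueness of the bounded solution within the tube of radius $r$ around $b^x$.
\end{itemize}
This covers exactly the perturbation $\varepsilon\indicator_{\mathcal B}$ used later in \autoref{pro::caseAnotOptimal}.

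One bookkeeping point: you should state $r\le 1$ explicitly. Your quadratic and Lipschitz bounds on $N$ were derived only for $|w|\le 1$.
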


The above holds in much more generality (see \cite[Prop.~3.3]{Longo_2024CriticalTransitionsForCaratheodory}). To see that it applies to our setting, note that our assumptions on $\alpha,\Tilde{\alpha},p$ trivially make the right hand sides satisfy \autoref{cond::1}. Further, 3 and 4 in \cite[Prop.~3.3]{Longo_2024CriticalTransitionsForCaratheodory} are satisfied, since the state derivatives of \eqref{eqq::prop::xhyperbolic} and \eqref{eqq::prop::zhyperbolic} coincide.
\\
\\
The following Proposition establishes the earlier mentioned connection between the $x $-system and the $y$-system.

\begin{proposition}\label{pro::xysamecase}
A solution $b$ of \eqref{eqq::x} is bounded if and only if $b + H_\kappa$ is bounded for \eqref{eqq::y}. Furthermore, a solution $b$ of \eqref{eqq::x} is hyperbolic attracting (repelling) if and only if $b + H_\kappa$ is hyperbolic attracting (repelling) for \eqref{eqq::y}.
\end{proposition}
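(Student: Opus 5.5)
The argument rests on two facts about $H_\kappa$: it is bounded on $\R$, and the change of variables $y=x+H_\kappa$ turns solutions of \eqref{eqq::x} into solutions of \eqref{eqq::y} while leaving the state derivative unchanged along corresponding solutions.

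\emph{Boundedness of $H_\kappa$.} Since $h(\cdot;\kappa)\in\SET(\R)\subset L^1(\R)$, we have $|H_\kappa(t)|\le \|h(\cdot;\kappa)\|_{1;\R}<\infty$ for all $t$. Moreover $H_\kappa$ is absolutely continuous with $\dot H_\kappa = h(\cdot;\kappa)$ for $l$-a.e.\ $t$.

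\emph{Correspondence of solutions.} Let $b$ solve \eqref{eqq::x} in the Carathéodory sense and set $c\coloneqq b+H_\kappa$. Then $c$ is absolutely continuous, and for $l$-a.e.\ $t$
\begin{align*}
\dot c(t)=\lambda-h(t;\kappa)-b(t)^2+h(t;\kappa)=\lambda-(c(t)-H_\kappa(t))^2 ,
\end{align*}
so $c$ solves \eqref{eqq::y}. Conversely, if $c$ solves \eqref{eqq::y}, the same computation run backwards shows that $b=c-H_\kappa$ solves \eqref{eqq::x}. The two maximal intervals of existence therefore coincide. Because $H_\kappa$ is bounded, $\sup|b|<\infty$ if and only if $\sup|c|<\infty$, since $\bigl||b|-|c|\bigr|\le\|H_\kappa\|_{\infty;\R}$. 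This proves the first claim.

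\emph{Hyperbolicity.} The variational equations match exactly. For \eqref{eqq::x} we have $f_x(t,x)=-2x$, so the variational equation along $b$ is $\dot z=-2b(t)z$. For \eqref{eqq::y} the right-hand side is $\tilde f(t,y)=\lambda-(y-H_\kappa(t))^2$, so $\tilde f_y(t,c(t))=-2(c(t)-H_\kappa(t))=-2b(t)$. The two linear equations are therefore identical, and
\begin{align*}
\exp\Bigl(\int_s^t f_x(l,b(l))\dd l\Bigr)=\exp\Bigl(\int_s^t \tilde f_y(l,c(l))\dd l\Bigr)
\end{align*}
for all $s,t$. Any dichotomy estimate \eqref{eqq::hyperbolicattracting} or \eqref{eqq::hyperbolicrepelling} that holds for one system holds for the other with the same pair $(K,\beta)$. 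Hyperbolicity also requires the solution to be bounded, and the first part supplies this. Hence $b$ is hyperbolic attracting (repelling) if and only if $c$ is.

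\emph{Main obstacle.} The only real point of care is regularity, since $h$ is merely $L^1\cap L^\infty$. The chain rule for $b+H_\kappa$ must therefore be justified in the absolutely continuous (Carathéodory) sense, using the Lebesgue differentiation theorem to get $\dot H_\kappa=h$ almost everywhere. Both systems satisfy \autoref{cond::1} by Appendix \autoref{app::prop::cond_allsystems}, so uniqueness holds and the correspondence of maximal solutions is legitimate.
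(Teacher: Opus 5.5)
Your proposal is correct and follows the paper's proof. Boundedness of $H_\kappa$, from $h(\cdot;\kappa)\in L^1(\R)$, gives the first claim, and the identity $\tilde f_y(t,c(t))=-2(c(t)-H_\kappa(t))=-2b(t)$ makes the two variational equations coincide, so the same dichotomy pair $(K,\beta)$ works in both directions. You also justify the Carathéodory chain rule and note that boundedness is part of the definition of hyperbolicity, both of which the paper leaves implicit.
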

\begin{proof}
    The first statement readily follows from $h\in L^1(\R)$ and therefore $|H_\kappa| \leq \frac{1}{\kappa} ||h||_{1;\R}$ and $y = x+ H_\kappa$. \\
    For the second statement, we start with the attracting case, the repelling one follows analogously. Assume $b$ is a hyperbolic attracting solution for \eqref{eqq::x}, i.e.~there exists a dichotomy constant pair $(K,\beta)$ such that
    \begin{align*}
        \exp\left(\int_s^t -2 b(l) \dd l \right) \leq Ke^{-\beta(t-s)} \qquad \text{for } t \geq s.
    \end{align*}
    Further, let $c \coloneqq b + H_\kappa$ and $f(t,x) \coloneqq \lambda -(x - H_\kappa(t))^2$. Then
    \begin{align*}
        & \exp \left(\int_s^t f_x(l,c) \dd l  \right)
        = \exp \left(\int_s^t  -2\left(c(l) - H_\kappa(l) \right) \dd l  \right) = \exp \left(\int_s^t  -2b(l) \dd l \right) 
        \leq K e^{\beta (t-s)}
    \end{align*}
    for $t\geq s$. Hence $c$ is hyperbolic attracting for \eqref{eqq::y} with the same dichotomy constant pair $(K,\beta)$. This shows ``$\Longrightarrow$", for ``$\Longleftarrow$" follows the same way by reading the equations from bottom to top. 
\end{proof}
Note that \autoref{pro::xysamecase} enables us to identify Cases A,B,C for \eqref{eqq::x} with the same case for \eqref{eqq::y}. In particular for a set of parameters $\{\lambda, \kappa, h\}$, \eqref{eqq::x} and \eqref{eqq::y} are always in the same Case. Nevertheless, both equations are kept throughout this paper, as some results are easier to obtain for \eqref{eqq::x}, while for others the formulation \eqref{eqq::y} is more convenient.

\subsection{Nonautonomous saddle node bifurcation}\label{sec::saddlenode}
The following \autoref{thm::casesx} and \autoref{thm::casesy} describe the dynamical behaviour of \eqref{eqq::x} and \eqref{eqq::y}.  
They are a summary of the results obtained in \cite{Longo_2024CriticalTransitionsForCaratheodory, QuadraticOde21, PiecewiseUniformlyContinuous24} and are put in our context here.

\begin{theorem}\label{thm::casesx}
Consider the $x$-system \eqref{eqq::x}, then the following holds:
\begin{enumerate}[label=(\roman*)]
    \item There exists a maximal solution $\attr^x$ which is defined on a negative open halfline $\mathcal{R}^-\coloneqq (-\infty,t_f)$ or $\R$ and is bounded from above, such that $x(t,s,x_0)$ remains bounded as $t\to-\infty$ if and only if $x_0 \leq \attr^x(s)$.
    \item There exists a maximal solution $\rep^x$ which is defined on a positive open halfline $\mathcal{R}^+\coloneqq (t_0,\infty)$ or $\R$ and is bounded from below, such that $x(t,s,x_0)$ remains bounded as $t\to\infty$ if and only if $x_0 \geq \rep^x(s)$.
    \item $\attr^x$ is locally pullback attractive and $\rep^x$ is locally pullback repulsive.
    \item Any globally defined solution is bounded.
    \item If there exist bounded solutions, then $\attr^x$ and $\rep^x$ are globally defined, and bounded solutions are exactly those $x(t,s,x_0)$ where $\rep^x(s) \leq x_0 \leq \attr^x(s)$.
\end{enumerate}
Further, one of the following holds
\begin{itemize}
    \item Case A: The two solutions $\attr^x$ and $\rep^x$ are hyperbolic with $\attr^x$ attractive and $\rep^x$ repulsive. Moreover,
    \begin{align*}
       & \lim_{t\to-\infty} \attr^x(t) = \sqrt{\lambda} \qquad \qquad && \lim_{t\to\infty} \attr^x(t) = \sqrt{\lambda} \\
       & \lim_{t\to-\infty} \rep^x(t) = -\sqrt{\lambda} \qquad \qquad && \lim_{t\to\infty} \rep^x(t) = -\sqrt{\lambda} 
    \end{align*}
    and $\lim_{t\to-\infty} x(t,s,x_0) = - \sqrt{\lambda}$ for $x_0 < \attr^x(s)$ as well as $\lim_{t\to\infty} x(t,s,x_0) = \sqrt{\lambda}$ for $x_0 > \rep^x(s)$. Finally, $\attr^x,\rep^x$ are the only hyperbolic solutions.
    \item Case B: There exist no hyperbolic solutions, $\attr^x = \rep^x$ is the only bounded solution, and
    \begin{align*}
        \lim_{t\to-\infty} \attr^x(t) = \sqrt{\lambda} \qquad \qquad \lim_{t\to\infty} \attr^x(t) = -\sqrt{\lambda}.
    \end{align*}
    \item Case C: There exist no globally defined solutions, and
    \begin{align*}
        \lim_{t\uparrow t_f} \attr^x (t) = - \infty \qquad \qquad \lim_{t\downarrow t_0} \rep^x (t) =  \infty .
    \end{align*}
\end{itemize}
\end{theorem}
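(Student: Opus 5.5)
The plan is to deduce \autoref{thm::casesx} from the general theory for scalar Carathéodory equations that are coercive and concave in the state variable \cite{Longo_2024CriticalTransitionsForCaratheodory, QuadraticOde21, PiecewiseUniformlyContinuous24}, and then to identify the asymptotic limits using the integrability of $h$.

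\emph{Step 1: hypotheses of the general theory.} Write $f(t,x)=\lambda-h(t;\kappa)-x^2$. By the appendix result \autoref{app::prop::cond_allsystems}, $f$ satisfies \autoref{cond::1}. Since $h(\cdot;\kappa)\in\SET(\R)$, $f$ is also uniformly coercive, i.e.\ $f(t,x)\to-\infty$ as $|x|\to\infty$ uniformly in $t$, because $|h|\le\|h\|_{\infty;\R}$. Moreover, $f_{xx}=-2<0$ gives uniform strict concavity. These are exactly the standing assumptions of the coercive--concave framework (e.g.\ \cite[Thm.~3.5]{Longo_2024CriticalTransitionsForCaratheodory}, and the quadratic case in \cite{QuadraticOde21}).

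\emph{Step 2: items (i)--(v).} I would import these directly from that framework. For intuition and to fix signs, the construction runs as follows.
\begin{itemize}
\item Define $\attr^x(s)=\sup\{x_0: x(t,s,x_0)\text{ bounded as }t\to-\infty\}$; backward blow-up to $+\infty$ of large data comes from coercivity, via comparison with $\dot x=C-x^2$.
\item Define $\rep^x$ analogously in forward time.
\item Local pullback attraction and repulsion follow from monotonicity of the scalar flow.
\item A globally defined solution must be bounded, since any solution leaving $[-j,j]$ for $j$ large blows up in finite time in one time direction.
\item Ordering and monotonicity of the flow then give (v).
\end{itemize}

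\emph{Step 3: the trichotomy.} If no bounded solution exists we are in Case C, and maximality forces $\attr^x\to-\infty$ at a finite $t_f$ and $\rep^x\to+\infty$ at a finite $t_0$. Otherwise concavity yields the dichotomy used in \cite{QuadraticOde21}: either there are two distinct hyperbolic solutions $\attr^x>\rep^x$ (Case A), or there is a unique bounded solution $\attr^x=\rep^x$, which is nonhyperbolic (Case B). The idea behind the dichotomy is that for two distinct bounded solutions $b_1>b_2$, $\frac{d}{dt}\log(b_1-b_2)=-(b_1+b_2)$. Combined with concavity, this forces exponential dichotomies of opposite types along $b_1$ and $b_2$.

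\emph{Step 4: the limits, which I expect to be the main point needing care.} As $t\to\pm\infty$ the equation is asymptotic to the autonomous $\dot x=\lambda-x^2$ in the integrated sense, since $h\in L^1$. I would apply the $y$-transformation $y=x+H_\kappa$ from \autoref{pro::xysamecase}, with $H_\kappa(t)\to\delta^-=0$ and $H_\kappa(t)\to\delta^+_\kappa$. This puts us in the asymptotically autonomous transition setting of \cite[Chap.~4]{Longo_2024CriticalTransitionsForCaratheodory}, whose past and future limit equations both have hyperbolic equilibria $\pm\sqrt\lambda$ (shifted by $\delta^\pm$ in $y$, and unshifted in $x$).

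Bounded solutions must converge to equilibria of the limit problems. This follows from a Lyapunov-type argument: $h\in L^1$ makes the perturbation asymptotically negligible, so the $\omega$- and $\alpha$-limit sets are invariant for $\dot x=\lambda-x^2$ and hence are equilibria. The pullback attractor must then tend to $\sqrt\lambda$ at $-\infty$, because it is the upper bounded solution and a limit of $-\sqrt\lambda$ would contradict maximality. Similarly $\rep^x\to-\sqrt\lambda$ at $+\infty$.

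In Case A, hyperbolicity and persistence (\autoref{pro::robustnesshyperbolic}) give $\attr^x\to\sqrt\lambda$ at $+\infty$ and $\rep^x\to-\sqrt\lambda$ at $-\infty$. The statements about generic $x_0$ follow from monotonicity together with the basins of the limit equilibria. In Case B the single bounded solution must connect $\sqrt\lambda$ to $-\sqrt\lambda$. Otherwise it would be asymptotically hyperbolic at both ends with matching type, which yields an exponential dichotomy on $\R$ and contradicts nonhyperbolicity.
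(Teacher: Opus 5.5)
Your overall route is the paper's. It obtains (i)--(v) from the coercive--concave theory of \cite[Thm.~3.1, Thm.~3.4]{Longo_2024CriticalTransitionsForCaratheodory}. It then notes that \eqref{eqq::y} is a transition equation $\dot y=f(t,y-H_\kappa(t))$ with $H_\kappa$ bounded and convergent at $\pm\infty$, takes the trichotomy and all limits from \cite[Thm.~4.4]{Longo_2024CriticalTransitionsForCaratheodory}, and transfers back via \autoref{pro::xysamecase}. Had you stopped at these citations (Steps 1, 2 and the first half of Step 4), your proposal would coincide with the paper's proof.

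The mechanism you supply in Step 3, however, fails as stated. Concavity does not yield ``either two hyperbolic solutions or $\attr^x=\rep^x$''. Your identity gives $\int_s^t -2b_1=\log\frac{(b_1-b_2)(t)}{(b_1-b_2)(s)}-\int_s^t(b_1-b_2)$, which produces an exponential dichotomy only if $\inf_{\R}(b_1-b_2)>0$. For concave quadratic equations, distinct bounded solutions can merge asymptotically. For instance, for $\dot x=-x^2+c(1+|t|)^{-2}$ with $c>0$ (i.e.\ $\lambda=0$), $\attr^x\neq\rep^x$ are both bounded, both tend to $0$ at $\pm\infty$, and no solution is hyperbolic.

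What rules this out here is $\lambda>0$, i.e.\ hyperbolicity of $\pm\sqrt\lambda$ for both limit equations, so Step 4 has to come first. Once bounded solutions are known to have limits in $\{\pm\sqrt\lambda\}$, two distinct ones cannot both tend to $-\sqrt\lambda$ as $t\to\infty$, since then $\frac{d}{dt}\log(b_1-b_2)=-(b_1+b_2)\to2\sqrt\lambda$. By the same reasoning they cannot both tend to $\sqrt\lambda$ as $t\to-\infty$. Hence $\attr^x\neq\rep^x$ forces $\attr^x-\rep^x\to2\sqrt\lambda$ at both ends, i.e.\ uniform separation and Case A. If instead $\attr^x=\rep^x$, this solution inherits the limits $\sqrt\lambda$ at $-\infty$ and $-\sqrt\lambda$ at $+\infty$, which excludes any dichotomy (Case B).

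Two smaller corrections in Step 4:
\begin{enumerate}
\item Invariance of limit sets alone does not force equilibria, since $[-\sqrt\lambda,\sqrt\lambda]$ is invariant for $\dot x=\lambda-x^2$. Argue instead that $\dot y=\lambda-(y-H_\kappa(t))^2$ is continuous with $H_\kappa$ convergent. So for large $|t|$ a bounded solution crosses any non-equilibrium level in only one direction, and it must converge to an equilibrium of the limit equation.
\item The limit $\attr^x\to\sqrt\lambda$ at $+\infty$ in Case A follows directly from the attracting dichotomy estimate: a limit $-\sqrt\lambda$ would give $-2\attr^x\to2\sqrt\lambda>0$. It does not come from \autoref{pro::robustnesshyperbolic}.
\end{enumerate}
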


\begin{theorem}\label{thm::casesy}
Consider the $y$-system \eqref{eqq::y}, then the following holds:
\begin{enumerate}[label=(\roman*)]
    \item There exists a maximal solution $\attr^y$, which is defined on a negative open halfline $\mathcal{R}^-\coloneqq (-\infty,t_f)$ or $\R$ and is bounded from above, such that $y(t,s,x_0)$ remains bounded as $t\to-\infty$ if and only if $x_0 \leq \attr^y(s)$.
    \item There exists a maximal solution $\rep^y$, which is defined on a positive open halfline $\mathcal{R}^+\coloneqq (t_0,\infty)$ or $\R$ and is bounded from below, such that $y(t,s,x_0)$ remains bounded as $t\to\infty$ if and only if $x_0 \geq \rep^y(s)$.
    \item $\attr^y$ is locally pullback attractive and $\rep^y$ is locally pullback repulsive.
    \item Any globally defined solution is bounded.
    \item If there exist bounded solutions, then $\attr^y$ and $\rep^y$ are globally defined and the bounded solutions are exactly those $y(t,s,x_0)$ where $\rep^y(s) \leq x_0 \leq \attr^y(s)$.
\end{enumerate}
Further, one of the following holds
\begin{itemize}
    \item Case A: The two solutions $\attr^y$ and $\rep^y$ are hyperbolic with $\attr^y$ attractive and $\rep^y$ repulsive. Moreover,
    \begin{align*}
       & \lim_{t\to-\infty} \attr^y(t) = \sqrt{\lambda} + \delta^- \qquad \qquad && \lim_{t\to\infty} \attr^y(t) = \sqrt{\lambda} + \delta_\kappa^+ \\
       & \lim_{t\to-\infty} \rep^y(t) = -\sqrt{\lambda} + \delta^- \qquad \qquad && \lim_{t\to\infty} \rep^y(t) = -\sqrt{\lambda} + \delta_\kappa^+
    \end{align*}
    and $\lim_{t\to-\infty} y(t,s,x_0) = - \sqrt{\lambda} + \delta^-$ for $x_0 < \attr^y(s)$ as well as $\lim_{t\to\infty} y(t,s,x_0) = \sqrt{\lambda} + \delta_\kappa^+$ for $x_0 > \rep^y(s)$, where $\delta^-,\delta^+_\kappa$ are from \eqref{eqq::def_delta_limits}. Finally, $\attr^y,\rep^y$ are the only hyperbolic solutions.
    \item Case B: There exist no hyperbolic solutions, $\attr^y = \rep^y$ is the only bounded solution and
    \begin{align*}
        \lim_{t\to-\infty} \attr^y(t) = \sqrt{\lambda} + \delta^- \qquad \qquad \lim_{t\to\infty} \attr^y(t) = -\sqrt{\lambda} + \delta_\kappa^+.
    \end{align*}
    \item Case C: There exist no globally defined solutions, and
    \begin{align*}
        \lim_{t\uparrow t_f} \attr^y (t) = - \infty \qquad \qquad \lim_{t\downarrow t_0} \rep^y (t) =  \infty .
    \end{align*}
\end{itemize}
\end{theorem}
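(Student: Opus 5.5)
The plan is to derive \autoref{thm::casesy} from \autoref{thm::casesx} through the transformation $y = x + H_\kappa$, using \autoref{pro::xysamecase} for everything that concerns boundedness and hyperbolicity. The only new work is to track how limits and the distinguished solutions transform.

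First, I will note the basic correspondence. $H_\kappa$ is absolutely continuous with $\dot H_\kappa = h(\cdot;\kappa)$ $l$-a.e. Hence $t\mapsto x(t,s,x_0)$ solves \eqref{eqq::x} if and only if $t\mapsto x(t,s,x_0)+H_\kappa(t)$ solves \eqref{eqq::y}. Comparing initial values gives the identity
\begin{align*}
y(t,s,y_0) = x(t,s,y_0 - H_\kappa(s)) + H_\kappa(t),
\end{align*}
and the two solutions have the same maximal interval of existence. Since $H_\kappa$ is bounded ($|H_\kappa|\le \|\alpha\|_{1;\R}+\kappa^{-1}\|g\|_{1;\R^+}$), a solution of one system is bounded (as $t\to\pm\infty$, from above or from below) exactly when the corresponding solution of the other system is.

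Next, I will set $\attr^y \coloneqq \attr^x + H_\kappa$ and $\rep^y \coloneqq \rep^x + H_\kappa$. These are defined on the same half-lines $\mathcal R^\mp$ or on $\R$, and they inherit boundedness from above and below respectively. For (i) and (ii): $y(t,s,y_0)$ stays bounded as $t\to-\infty$ if and only if $x(t,s,y_0-H_\kappa(s))$ does, which by \autoref{thm::casesx}(i) holds if and only if $y_0 - H_\kappa(s)\le \attr^x(s)$, that is, $y_0\le \attr^y(s)$. Item (ii) is handled the same way. Items (iv) and (v) follow by the same translation.

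For (iii), local pullback attraction and repulsion are statements about $|y(t,s,y_0)-\attr^y(t)| = |x(t,s,y_0-H_\kappa(s))-\attr^x(t)|$. The $H_\kappa(t)$ terms cancel, and a neighbourhood of $\attr^y(s)$ corresponds to a neighbourhood of $\attr^x(s)$ of the same size. Whatever definition of local pullback attractivity the cited works use therefore transfers verbatim.

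For the trichotomy, the case that \eqref{eqq::x} is in determines the case of \eqref{eqq::y}. Hyperbolicity and uniqueness of hyperbolic solutions transfer by \autoref{pro::xysamecase}, and the identity $\attr^y=\rep^y$ in Case B is preserved. For the limits, I will use $H_\kappa(t)\to\delta^-=0$ as $t\to-\infty$ (since $\alpha\in L^1$) and $H_\kappa(t)\to\delta_\kappa^+$ as $t\to\infty$. This gives, for instance, $\attr^y(t)\to\sqrt\lambda+\delta^-$ and $\attr^y(t)\to\pm\sqrt\lambda+\delta_\kappa^+$, together with the stated limits of the non-distinguished solutions. In Case C, the blow-up $\attr^x\to-\infty$ as $t\uparrow t_f$ transfers because $H_\kappa$ is bounded (and continuous at $t_f$); the same holds for $\rep^x\to\infty$ as $t\downarrow t_0$.

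The main point needing care is (iii). The plan is to check that the definition of local pullback attractivity in \cite{Longo_2024CriticalTransitionsForCaratheodory} is invariant under a bounded, time-dependent translation of the state. Since the translation is the same for every solution at a given time, the distance between two solutions is unchanged, so I expect this to go through. Everything else is routine bookkeeping.
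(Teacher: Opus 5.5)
Your translation bookkeeping is sound. With $y(t,s,y_0)=x(t,s,y_0-H_\kappa(s))+H_\kappa(t)$ and $H_\kappa$ bounded, continuous and convergent at $\pm\infty$, every item of \autoref{thm::casesx} does carry over to the corresponding item of \autoref{thm::casesy}. Item (iii) is in fact the easy part.

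The gap is the logical direction. In the paper the two theorems share one proof. The Case A/B/C part of \autoref{thm::casesx} is obtained precisely by transporting the $y$-result back through \autoref{pro::xysamecase} and $x=y-H_\kappa$. That part consists of the trichotomy itself, the limits $\pm\sqrt{\lambda}$ of $\attr^x,\rep^x$ at $\pm\infty$, and the limits of the remaining solutions in Case A. Only the general structural items rest on an independent source, namely \cite[Thm.~3.1, Thm.~3.4]{Longo_2024CriticalTransitionsForCaratheodory} via coercivity and concavity. Taking \autoref{thm::casesx} as input therefore makes your argument circular for exactly the nontrivial content of the statement.

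What is missing is the reason the $y$-system was introduced. Equation \eqref{eqq::y} reads $\dot y=f(y-H_\kappa(t))$ with $f(z)=\lambda-z^2$. Here $H_\kappa$ is bounded and uniformly continuous (\autoref{lem::propH}) and has limits $\delta^-=0$ and $\delta^+_\kappa$. So \eqref{eqq::y} is a transition equation between the autonomous limit problems $\dot y=\lambda-(y-\delta^-)^2$ and $\dot y=\lambda-(y-\delta^+_\kappa)^2$. Their hyperbolic equilibria $\pm\sqrt{\lambda}+\delta^-$ and $\pm\sqrt{\lambda}+\delta^+_\kappa$ are exactly the limits in the statement.

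This is the setting of \cite[Thm.~4.4]{Longo_2024CriticalTransitionsForCaratheodory}, which gives the trichotomy for \eqref{eqq::y} directly. The structural items follow from \cite[Thm.~3.1, Thm.~3.4]{Longo_2024CriticalTransitionsForCaratheodory} once \autoref{cond::1}, coercivity and concavity are checked (\autoref{app::prop::cond_allsystems}). The $x$-system does not fit that framework directly: $h\in\SET(\R)$ enters additively and need not have limits at $\pm\infty$. Once the $y$-statement is proved this way, your transformation argument, run in the opposite direction, is exactly what yields the Case A/B/C part of \autoref{thm::casesx}.
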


\begin{proof}[Proof of \autoref{thm::casesx} and \autoref{thm::casesy}]
   After noticing that both \eqref{eqq::x} and \eqref{eqq::y} are coercive and concave (see Appendix \autoref{app::prop::cond_allsystems} for definition and proof) statements (i)-(iv) follow from \cite[Thm.~3.1,~Thm.~3.4]{Longo_2024CriticalTransitionsForCaratheodory}, since their conditions f1-f5 are covered by our \autoref{cond::1}.
    \\\\
   For the statements about the different cases; note that the $y$-system \eqref{eqq::y} is of the form $\dot{y} = f(t, y - H_\kappa(t))$, where $H_\kappa \in L^\infty(\R)$ (since $h \in L^1$) and moreover, $\lim_{t\to\pm\infty} H_\kappa(t)$ exist. Hence, the $y$-system fulfils all the assumptions of 
   \cite[Thm.~4.4]{Longo_2024CriticalTransitionsForCaratheodory}, which proves the result. The corresponding results for the $x$-system \eqref{eqq::x} then follow readily by \autoref{pro::xysamecase} and the fact that $x(t) = y(t) + H_\kappa (t)$.
\end{proof}

\begin{remark}
    Up to this point, we identified $\attr^{x,y}$ ($\rep^{x,y}$) as pullback attractors (repellers). Note that the nonautonomous nature of \eqref{eqq::x} and \eqref{eqq::y} demands a distinction between pullback and forward attractors (repellers). \autoref{thm::casesx} and \autoref{thm::casesy} show that $\attr^{x,y}$ and $\attr^{x,y}$ are forward attractors/ repellers, i.e.~they form a classical attractor-repeller pair of a hyperbolic solution (\cite{Longo_2024CriticalTransitionsForCaratheodory}), if and only if they are in Case A. However, in any case, they are pullback attractors/repellers. Since the properties of pullback/forward attractors/repellers, apart from what has already been shown in \autoref{thm::casesx} and \autoref{thm::casesy}, are not relevant for the rest of this paper, we sometimes simply call $\attr^{x,y}$ attractor and $\rep^{x,y}$ repeller going forward.
\end{remark}

\subsection{Introduction of a rate function \texorpdfstring{$\Kappa$}{Kappa}}\label{sec::ratefunction}

In this section, we analyse how, for a fixed $\lambda>0$, \eqref{eqq::x} and \eqref{eqq::y} transition between Cases A,B and C upon variation of $\kappa$. Throughout \autoref{thm::casesopen} to \autoref{cor::transversalTipping} we establish that (under certain weak assumptions on $\alpha$ and $g$) there is a unique rate $\Bar{\kappa}$ such that: 
\begin{itemize}
        \item \eqref{eqq::x}$_{\kappa}$ is in Case A for $\kappa > \Bar{\kappa}$;
        \item \eqref{eqq::x}$_{\kappa}$ is in Case B for $\kappa = \Bar{\kappa}$;
        \item \eqref{eqq::x}$_{\kappa}$ is in Case C for $\kappa < \Bar{\kappa}$.
\end{itemize}
In particular, this means that the boundedness of $\attr^x_\kappa$ can be enforced by increasing $\kappa$.
A key result in this section is \autoref{thm::transversaltipping} which establishes that for any system of the form \eqref{eqq::x} there can be maximally one transition from Case A to C upon varying $\kappa$. \\
The strategy in this chapter is as follows: In \autoref{thm::casesopen}, we show that any transition from Case A to C must go through Case B, then in \autoref{thm::largekappa}, we establish that, upon making $\kappa$, large enough \eqref{eqq::x}$_\kappa$ is always in Case A. Hence, we can ``save" the attractor by increasing the rate. As a next step, we establish in \autoref{thm::transversaltipping} that there is at most one transition between Case A and C. These results together then culminate in our main statement above (\autoref{cor::transversalTipping}) and the definition of the rate function $\Kappa$, which assigns every set of parameters $\{\lambda, \alpha, g\}$ the $\Bar{\kappa}$ such that \eqref{eqq::x}$_{\Bar{\kappa}}$ is in Case B (see \autoref{def::Kappa}). This structure is also visualised in \autoref{fig::flow_diagram}.

\begin{figure}
    \centering
    \begin{tikzpicture}[
        node distance=1.3cm and 1.6cm, 
        box/.style={rectangle, rounded corners, draw, thick, align=center, text width=4.2cm, minimum height=1.4cm, inner sep=8pt},
        theorem/.style={
            box,
            fill=blue!8
        },
        corollary/.style={
            box,
            fill=green!10
        },
        definition/.style={
            box,
            fill=orange!12
        },
        arrow/.style={
            -{Latex[length=3mm]},
            thick
        }
    ]
    
    \node[theorem] (T38) {
        \underline{\autoref{thm::casesopen}}\\[2mm]
        Any transition from Case A to Case C must go through Case B.
    };
    
    \node[theorem, right=of T38] (T310) {
        \underline{\autoref{thm::largekappa}}\\[2mm]
        System in Case A for large $\kappa$.
    };
    
    \node[theorem, right=of T310] (T311) {
        \underline{\autoref{thm::transversaltipping}}\\[2mm]
        We have at most one transition.
    };

    \node[corollary, below=of T310] (C313) {
        \underline{\autoref{cor::transversalTipping}}\\[2mm]
        If $g$ is such that there exists a tipping rate, then there exists exactly one rate-induced transition.
    };
    
    \node[definition, below=of C313] (D315) {
        \underline{\autoref{def::Kappa}}\\[2mm]
        Definition of rate function $\Kappa$.
    };

    \draw[arrow] (T38.south east) -- (C313.north west);
    \draw[arrow] (T310.south) -- (C313.north);
    \draw[arrow] (T311.south west) -- (C313.north east);
    \draw[arrow] (C313.south) -- (D315.north);

\end{tikzpicture}
    \caption{Flow chart of the structure of this section.}
    \label{fig::flow_diagram}
\end{figure}

\begin{theorem}\label{thm::casesopen}
    For the $y$-system \eqref{eqq::y} we have:
    \begin{enumerate}[label=(\roman*)]
        \item The set $\{\kappa>0: \eqref{eqq::y}_\kappa \text{ is in Case A} \}$ is open;
        \item The set $\{\kappa>0: \eqref{eqq::y}_\kappa \text{ is in Case C} \}$ is open;
        \item If there exist $0<\kappa_1< \kappa_2 $ such that \eqref{eqq::y}$_{\kappa_1}$ is in Case C and \eqref{eqq::y}$_{\kappa_2}$ is in Case A, then, there exists $\kappa \in (\kappa_1,\kappa_2)$ such that \eqref{eqq::y}$_{\kappa}$ is in Case B. In particular, any transition from Case A to Case C must go through Case B.
    \end{enumerate}
 By \autoref{pro::xysamecase} the same holds for the $x$-system \eqref{eqq::x}.
\end{theorem}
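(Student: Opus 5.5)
The plan is to treat the $y$-system as a continuous family in $\kappa$. The starting point is that $\kappa\mapsto H_\kappa$ is continuous in $\|\cdot\|_{\infty;\R}$, and we then use robustness results for Case A and Case C.

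\textbf{Step 0 (continuity of $H_\kappa$).} For $t\le 0$, $H_\kappa$ does not depend on $\kappa$. For $t>0$,
\[
H_\kappa(t)-H_{\kappa'}(t)=\int_0^t \big(g(s\kappa)-g(s\kappa')\big)\dd s .
\]
Approximate $g$ in $L^1(\R^+)$ by a continuous compactly supported $\tilde g$. The rescaling estimate $\int_0^\infty|g(s\kappa)-\tilde g(s\kappa)|\dd s=\kappa^{-1}\|g-\tilde g\|_{1}$ then gives $\sup_t|H_\kappa(t)-H_{\kappa'}(t)|\le \|g(\cdot\kappa)-g(\cdot\kappa')\|_{1;\R^+}\to 0$ as $\kappa'\to\kappa$. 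This is the usual continuity of dilations in $L^1$.

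\textbf{(i) Case A is open.} Rewrite the $y$-system in $z=y-H_\kappa$ coordinates. This is just the $x$-system, and its derivative involves $h$, which is not uniformly continuous in $\kappa$. So it is better to work directly with the $y$-system. If $c$ is the attractive hyperbolic solution for $\kappa$, consider the equation for $\kappa'$ near $c$ as a perturbation: $f_{\kappa'}(t,y)-f_\kappa(t,y)=(H_{\kappa'}-H_\kappa)(2y-H_\kappa-H_{\kappa'})$. This is uniformly small on bounded $y$-sets. The variational equation $-2(c-H_\kappa)$ is also perturbed only by $O(\|H_{\kappa'}-H_\kappa\|_\infty)$. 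Now apply the general robustness result \cite[Prop.~3.3]{Longo_2024CriticalTransitionsForCaratheodory}, of which \autoref{pro::robustnesshyperbolic} is a special case. Its hypotheses are that the right-hand sides and their $y$-derivatives are uniformly close on a neighbourhood of $c$. This yields an attractive hyperbolic solution for $\kappa'$, and similarly a repulsive one near the repeller. Since Case B has no hyperbolic solutions and Case C has no bounded ones, the system for $\kappa'$ must be in Case A.

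\textbf{(ii) Case C is open.} In Case C the attractor blows up: $\attr^y_\kappa(t)\to-\infty$ as $t\uparrow t_f$. Fix $s$ with $\attr^y_\kappa$ defined on $(-\infty,s]$, and pick a time $T>s$ and a value $x_0$ slightly above $\attr^y_\kappa(s)$ such that $y_\kappa(T,s,x_0)$ is very negative, say below $-M$ for some large $M$. To get this, recall that solutions just above the attractor follow it closely up to blow-up; alternatively use the repeller, which lies above and also blows up. Continuous dependence on parameters over the compact interval $[s,T]$, using the sup-norm closeness of $H_{\kappa'}$ to $H_\kappa$, gives $y_{\kappa'}(T,s,x_0)<-M$ for $\kappa'$ near $\kappa$. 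Now choose $M$ larger than $\sqrt\lambda+\sup_{\kappa'}\|H_{\kappa'}\|_\infty+1$. Then $\dot y\le\lambda-(y-H)^2$ forces finite-time blow-up to $-\infty$ for $\kappa'$.

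On the other hand, by \autoref{thm::casesy}(i), the solution through $(s,x_0)$ with $x_0$ close to $\attr^y_{\kappa'}(s)$ would have to be bounded if $\kappa'$ were in Case A or B. To make this rigorous, I would instead pull back: take $x_0$ slightly below $\attr^y_\kappa(s)$. Such solutions are bounded backward and still blow up forward, since everything below the attractor blows up in Case C. The backward-bounded region for $\kappa'$ is $\{x_0\le\attr^y_{\kappa'}(s)\}$. If $\kappa'$ were not in Case C, $\attr^y_{\kappa'}$ would be globally defined and bounded, so solutions starting below it and above $\rep^y_{\kappa'}$ are bounded. Since $\rep^y_{\kappa'}$ is bounded below, a solution going below $-M$ must start below the repeller at time $T$. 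We obtain a contradiction by showing $x_0$ lies above $\rep^y_{\kappa'}(s)$. This uses a uniform-in-$\kappa'$ lower bound $\rep^y_{\kappa'}\ge -\sqrt\lambda-\sup\|H\|_\infty$, which holds since bounded solutions stay in that band. Together with the fact that $\attr^y_\kappa(s)$ itself lies above this band for suitable $s\to-\infty$, where it is close to $\sqrt\lambda$, this closes the argument.

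\textbf{(iii) Case B in between.} This follows by connectedness. $(\kappa_1,\kappa_2)$ is connected, and the Case A and Case C sets are disjoint and open by (i) and (ii). Both meet the closed interval $[\kappa_1,\kappa_2]$, so they cannot cover the interval. Hence some $\kappa\in(\kappa_1,\kappa_2)$ is in neither, and by the trichotomy of \autoref{thm::casesy} it is in Case B. The statement for the $x$-system follows from \autoref{pro::xysamecase}.

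The main obstacle is (ii): making precise the uniform bounds that turn finite-time continuity into blow-up. This plan also leaves the most freedom in the choice of $s$ and $T$.
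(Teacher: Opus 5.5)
Your route differs from the paper's. The paper gets (i) and (ii) together from the threshold $\lambda^*(2H_\kappa,-H_\kappa^2)$ of \autoref{thm::lambdastar}, which is Lipschitz in the sup norm; the Case A and Case C sets are then preimages of open sets under a continuous function of $\kappa$. Several of your steps are sound: Step 0 (continuity of dilations in $L^1$ gives $\|H_{\kappa'}-H_\kappa\|_{\infty;\R}\to 0$ without the subsequence argument of \autoref{lem::propH}), the robustness argument for (i) in $y$-coordinates, and the connectedness argument for (iii). Part (ii), however, has a genuine gap, and (iii) depends on it.

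\textbf{The gap in (ii).} The decisive step, ``we obtain a contradiction by showing $x_0$ lies above $\rep^y_{\kappa'}(s)$'', is not proved. With your final choice of $x_0$ it cannot be proved.
\begin{enumerate}
\item A uniform \emph{lower} bound on $\rep^y_{\kappa'}$ can never place $x_0$ above $\rep^y_{\kappa'}(s)$; that would require an upper bound.
\item With $x_0$ slightly \emph{below} $\attr^y_\kappa(s)$, the required inequality fails exactly in the case you must exclude. If $\kappa'$ were in Case B, then $\rep^y_{\kappa'}=\attr^y_{\kappa'}$. For $s\le 0$ one also has $\attr^y_{\kappa'}(s)=\attr^y_\kappa(s)>x_0$. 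So a solution through $(s,x_0)$ that blows up forward is entirely consistent with Case B, and no contradiction arises.
\item In Case C the repeller blows up to $+\infty$ \emph{backward} and is forward bounded, so it cannot be used to reach $-M$.
\item In your first variant, ``$x_0$ close to $\attr^y_{\kappa'}(s)$'' does not give boundedness. \autoref{thm::casesy}(i) concerns backward boundedness, whereas forward boundedness requires $x_0\ge\rep^y_{\kappa'}(s)$.
\end{enumerate}

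\textbf{The missing idea and a fix.} Since $H_{\kappa'}=H_\kappa$ on $(-\infty,0]$, the backward dynamics, and hence the attractor up to time $0$, do not depend on $\kappa$. Thus $\attr^y_{\kappa'}(s)=\attr^y_\kappa(s)$ for $s\le 0$, $s<t_f$. If $t_f\le 0$, every $\kappa'$ is trivially in Case C. Otherwise, follow the attractor itself:
\begin{enumerate}
\item Fix such an $s$ and take $M>\sqrt\lambda+\sup_{\kappa'\in[\kappa/2,2\kappa]}\|H_{\kappa'}\|_{\infty;\R}+1$.
\item Choose $T\in(s,t_f)$ with $\attr^y_\kappa(T)<-M-1$.
\item By continuous dependence on $[s,T]$, using $\|H_{\kappa'}-H_\kappa\|_{\infty;\R}\to 0$, obtain $\attr^y_{\kappa'}(T)<-M$ for $\kappa'$ near $\kappa$.
\item Your comparison bound $\dot y\le\lambda-(y-H)^2$ then forces $\attr^y_{\kappa'}$ to blow up in finite time. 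It is therefore not globally defined, so $\kappa'$ is in neither Case A nor Case B.
\end{enumerate}
Your first variant also works if you take $x_0\ge\attr^y_\kappa(s)$ with $s\le 0$, because then $x_0\ge\attr^y_{\kappa'}(s)\ge\rep^y_{\kappa'}(s)$.
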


The proof of \autoref{thm::casesopen} requires the following Lemma, the proof of which has been moved to the Appendix.
\begin{lemma}\label{lem::propH}
The following holds for $H_\kappa$:
\begin{enumerate}[label=(\roman*)]
    \item Let $\kappa>0$, then the maps $t\mapsto H_\kappa(t)$ and $t\mapsto H_\kappa(t)^2$ are bounded and uniformly continuous.
    \item Let $0<a<b$ and $(\kappa_n)_{n\in\N}$ be a sequence in $[a,b]$ with $\kappa_n \to \kappa>0$. Then there exists a subsequence $(\kappa_{n_k})_{k\in\N}$ such that
    \begin{align*}
        &H_{\kappa_{n_k}} \longrightarrow H_\kappa \quad \text{uniformly, and} \\
        &H_{\kappa_{n_k}}^2 \longrightarrow H_\kappa^2 \quad \text{uniformly.}
    \end{align*}
\end{enumerate}
\end{lemma}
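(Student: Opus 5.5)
Since $H_\kappa(t)=\int_{-\infty}^t h(s;\kappa)\dd s$, the proof of both parts rests on the explicit formula
\begin{align*}
H_\kappa(t)=\left\{\begin{array}{cc} A(t) & t\le 0\\ A(0)+\frac{1}{\kappa}G(t\kappa) & t>0,\end{array}\right.
\qquad A(t)\coloneqq\int_{-\infty}^t\alpha(s)\dd s,\quad G(\tau)\coloneqq\int_0^\tau g(s)\dd s .
\end{align*}
Because $\alpha\in L^1(\R)$ and $g\in L^1(\R^+)$, both $A$ and $G$ are bounded, by $\|\alpha\|_{1;\R}$ and $\|g\|_{1;\R^+}$ respectively.

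For (i), boundedness is immediate: $|H_\kappa|\le \|\alpha\|_{1;\R}+\kappa^{-1}\|g\|_{1;\R^+}$. For uniform continuity, note that for $t<t'$ we have $|H_\kappa(t')-H_\kappa(t)|\le \int_t^{t'}|h(s;\kappa)|\dd s\le \|h(\cdot;\kappa)\|_{\infty;\R}\,|t'-t|$. This uses that $h(\cdot;\kappa)\in L^\infty$, since $\alpha\in L^\infty(\R)$ and $g\in L^\infty(\R^+)$. Hence $H_\kappa$ is in fact Lipschitz. (If one wanted to avoid $L^\infty$, absolute continuity of the integral of an $L^1$ function gives uniform continuity as well.) For the square I use $|H^2(t')-H^2(t)|=|H(t')+H(t)|\,|H(t')-H(t)|\le 2\|H_\kappa\|_\infty|H(t')-H(t)|$, so $H_\kappa^2$ is also bounded and uniformly continuous.

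For (ii), I expect to get convergence of the whole sequence, which of course gives the subsequence. On $t\le0$ the functions $H_{\kappa_n}$ and $H_\kappa$ coincide. On $t>0$ I would split
\begin{align*}
|H_{\kappa_n}(t)-H_\kappa(t)|\le \Big|\tfrac1{\kappa_n}-\tfrac1\kappa\Big|\,|G(t\kappa_n)|+\tfrac1\kappa|G(t\kappa_n)-G(t\kappa)|.
\end{align*}
The first term is at most $|\kappa_n^{-1}-\kappa^{-1}|\,\|g\|_1\to0$, uniformly in $t$. The second term is the main obstacle, because $|t\kappa_n-t\kappa|=t|\kappa_n-\kappa|$ is not uniformly small in $t$.

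To handle the second term, I would fix $\varepsilon>0$ and choose $T_0$ with $\int_{T_0}^\infty|g|<\varepsilon$. On $t\le T_0/a$, both $t\kappa_n$ and $t\kappa$ exceed... rather, are at most $T_0 b/a$, and there $|G(t\kappa_n)-G(t\kappa)|\le\|g\|_\infty\,(T_0/a)|\kappa_n-\kappa|\to0$. On $t> T_0/a$, both $t\kappa_n\ge T_0$ and $t\kappa\ge T_0$, because $\kappa_n\in[a,b]$ and $\kappa\ge a$. Hence $|G(t\kappa_n)-G(t\kappa)|\le\int_{T_0}^\infty|g|<\varepsilon$. Together these give $\limsup_n\sup_t|H_{\kappa_n}-H_\kappa|\le\varepsilon/\kappa$, and since $\varepsilon$ was arbitrary, $H_{\kappa_n}\to H_\kappa$ uniformly.

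Finally, $|H_{\kappa_n}^2-H_\kappa^2|\le(\|H_{\kappa_n}\|_\infty+\|H_\kappa\|_\infty)\,\|H_{\kappa_n}-H_\kappa\|_\infty$. The first factor is uniformly bounded by $2\|\alpha\|_1+2\|g\|_1/a$, so $H_{\kappa_n}^2\to H_\kappa^2$ uniformly as well.
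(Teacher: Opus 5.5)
Your proof is correct. Part (i) is the paper's Lipschitz argument. You write it as $|H_\kappa(t')-H_\kappa(t)|\le\int_t^{t'}|h(s;\kappa)|\,\mathrm{d}s$ rather than through $H_\kappa'=h$, which is slightly cleaner because $H_\kappa$ is only differentiable almost everywhere.

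In part (ii), your large-$t$ estimate is the same tail argument the paper uses: once $ta$ passes the point beyond which $\int|g|$ is small, both $t\kappa_n$ and $t\kappa$ lie in the tail. The two proofs differ on the bounded range of $t$.
\begin{itemize}
\item The paper applies Arzel\`a--Ascoli to the uniformly bounded, equicontinuous family $(H_{\kappa_n})$ on $[0,T]$. This is why it only obtains a subsequence, and identifying the limit as $H_\kappa$ tacitly needs pointwise convergence, which it does not state.
\item You estimate $|G(t\kappa_n)-G(t\kappa)|\le\|g\|_{\infty;[0,\infty)}\,(T_0/a)\,|\kappa_n-\kappa|$ directly.
\end{itemize}

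Your route is more elementary and proves more. The whole sequence converges, so $\kappa\mapsto H_\kappa$ and $\kappa\mapsto H_\kappa^2$ are continuous from $(0,\infty)$ into the bounded functions with the sup norm. You never use the upper bound $b$, and $a$ can be replaced by $\kappa/2$, since $\kappa_n\ge\kappa/2$ eventually. As a consequence, in the proof of \autoref{thm::casesopen} the map $\kappa\mapsto\lambda^*(2H_\kappa,-H_\kappa^2)$ is continuous outright, with no passage to subsequences.

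Delete the stray phrase ``exceed\dots rather, are at most $T_0b/a$''; the estimate you actually use does not depend on it.
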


\begin{proof}[Proof of \autoref{thm::casesopen}]
There exists $\lambda^* = \lambda^*(2H_\kappa, -H_\kappa^2): C_{bu}(\R) \times C_{bu}(\R) \to \R$ that is continuous in the $L^\infty$ topology and is such that \eqref{eqq::y} is in Case A if and only if if $\lambda > \lambda^* $. Further, \eqref{eqq::y} is in Case C if and only if $\lambda < \lambda^* $, by Appendix \autoref{thm::lambdastar}. \\
We define
\begin{align}\label{eqq::def::lambdaB}
    \lambdaB : \R^+ \to \R,\  \lambdaB(\kappa) =  \lambda^*(2H_\kappa, -H_\kappa^2)
\end{align}
The rest of the argument is a consequence of the fact that the preimage of an open set under a continuous function is open. \\

(i) 
By Appendix \autoref{thm::lambdastar}
\begin{align*}
    \{\kappa>0: \eqref{eqq::y}_\kappa \text{ is in Case A} \} = \lambdaB^{-1}((-\infty,\lambda)) ,
\end{align*}
where $\lambdaB^{-1}(A)$ denotes the preimage of $A$ under $\lambdaB$. We show that $\lambdaB^{-1}((-\infty,\lambda))$ is open, by showing that $V \coloneqq \R \backslash \lambdaB^{-1}((-\infty,\lambda))$ is closed. Denote by $W$ the closed set $W \coloneqq [\lambda, \infty) = \R \backslash (-\infty,\lambda)$. Let $(\kappa_n)_{n\in\N}$ be a sequence in $V$, with $\kappa_n \to \kappa$. By \autoref{lem::propH} and the fact that $\lambda^*$ is continuous in the $L^\infty$ topology, there exists a subsequence $\kappa_{n_k}$ such that $\lambdaB(\kappa_{n_k}) \to \lambdaB(\kappa)$. Since $W$ is closed, $\lambdaB(\kappa) \in W$ and therefore $\kappa \in V = \lambdaB^{-1}(W)$. \\
The proof of (ii) follows the same argumentation, since
\begin{align*}
        \{\kappa>0: \eqref{eqq::y}_\kappa \text{ is in Case C} \} = \lambdaB^{-1}((\lambda,\infty)) .
\end{align*}
(iii) Assume that there exist $0<\kappa_1<\kappa_2 $ such that \eqref{eqq::y}$_{\kappa_1}$ is in Case C and in \eqref{eqq::y}$_{\kappa_2}$ Case A. Let $X \coloneqq [\kappa_1,\kappa_2]$ equipped with the subspace topology. Note that $X$ is connected and hence $\emptyset,X$ are the only sets that are simultaneously open and closed in $X$. Further,
\begin{align*}
    & \mathscr{A} \coloneqq X \cap \{\kappa>0: \eqref{eqq::y}_\kappa \text{ is in Case A} \} \\
    & \mathscr{C} \coloneqq X \cap \{\kappa>0: \eqref{eqq::y}_\kappa \text{ is in Case C} \}
\end{align*}
are both open and non-empty $X$. The former follows from the definition of the subspace topology, while the latter follows from $\kappa_2 \in \mathscr{A}$ and $\kappa_1 \in \mathscr{C}$. If there was no $\kappa \in [\kappa_1,\kappa_2]$ such that \eqref{eqq::y}$_\kappa$ is in Case B, then by \autoref{thm::casesy}
\begin{align*}
   X = [\kappa_1,\kappa_2] = \mathscr{A} \cup \mathscr{C}.
\end{align*}
In particular, $\mathscr{A} = X\backslash\mathscr{C}$ must be closed. Hence, $\mathscr{A}$ is an open and closed set in $X$. However, since $\mathscr{A}$ and $\mathscr{C}$ are both non-empty, $\mathscr{A} \notin \{\emptyset,X\}$, a contradiction. It follows that $\mathscr{A} \cup \mathscr{C} \subsetneq X$ and therefore there exists a $\kappa\in[\kappa_1,\kappa_2]$ such that \eqref{eqq::y}$_\kappa$ is in Case B.
\end{proof}

So far, we considered $\attr^x$ (resp. $\attr^y$) as the attractor of \eqref{eqq::x} (resp. \eqref{eqq::y}). This has been done to make use of the results obtained in \cite{Longo_2024CriticalTransitionsForCaratheodory, QuadraticOde21, PiecewiseUniformlyContinuous24}. However, in the optimal control problem \eqref{prob::OCP} one considers a trajectory with $x(0)=\mu^x$. As a consequence, we aim to also formulate some result of this section in terms of an initial value problem. To that end, note that $\attr^x$ and $\attr^y$ are the solutions to
\begin{align}
    \label{eqq::initalvaluex}&\dot{x}_\kappa(t) = \lambda -g(t\kappa) - x_\kappa^2 \qquad &x_\kappa(0)=\attr^x(0) \\
    \label{eqq::initalvaluey}&\dot{y}_\kappa(t) = \lambda - \left(y_\kappa(t) - \int_{-\infty}^0 \alpha(s) \dd s - \int_0^t g(s\kappa) \dd s \right)^2 \qquad &y_\kappa(0) = \attr^y(0)
\end{align}
for $t\geq 0$. It is easy to see that the uniqueness of solutions implies that $\attr^x$ and $\attr^y$ are actually the solutions to \eqref{eqq::initalvaluex} and \eqref{eqq::initalvaluey}, respectively, provided that $\attr^x(0)$ (resp. $\attr^y(0)$) exist. This will be an ongoing assumption throughout the rest of this paper. Note that this assumption is well justified since if $\attr^x(0)$ does not exist, this means that \eqref{eqq::x} diverges to $-\infty$ before time $0$. Hence, the event one wants to avoid by the control occurs before any control can be applied. This would be an ill-posed control problem. In the following, we make a slightly stronger assumption on $\alpha$, namely that it is such that $\attr^x(0) > -\sqrt{\lambda}$. This, in particular, means that completely undoing the forcing for $t\geq 0$ (i.e.~$u=\alpha$) results in a bounded trajectory.

\begin{theorem}\label{thm::largekappa}
    Let $\attr^x(0) > -\sqrt{\lambda}$, then, for any $g\in\SET(\R^+)$, there exists $\kappa_2$ such that for any $\kappa > \kappa_2$,
    \begin{enumerate}[label=(\roman*)]
        \item \eqref{eqq::x}$_\kappa$ and \eqref{eqq::y}$_\kappa$ are in Case A.
        \item If in addition $g\geq 0$ and $\attr^x(0) \in (-\sqrt{\lambda}, \sqrt{\lambda})$, $\kappa_2$ can be chosen as $\kappa_2 = \frac{||g||_{1;[0,\infty)}}{\sqrt{\lambda} + \attr^x(0)}$.
        \item Further, \eqref{eqq::x}$_{\kappa_2}$ and \eqref{eqq::y}$_{\kappa_2}$ are in Case A or B.
    \end{enumerate} 
\end{theorem}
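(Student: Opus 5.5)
The plan is to work with the initial value formulation \eqref{eqq::initalvaluex}. The attractor $\attr^x$ on $(-\infty,0]$ does not depend on $\kappa$, since $h(t;\kappa)=\alpha(t)$ for $t<0$. It is bounded there by \autoref{thm::casesx}(i), and it satisfies $a\coloneqq\attr^x(0)>-\sqrt{\lambda}$. So everything reduces to showing that the forward solution $x_\kappa$ of \eqref{eqq::initalvaluex} stays bounded below by a constant strictly larger than $-\sqrt{\lambda}$ for $t\geq 0$.

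Once this is shown, $\attr^x$ is globally defined. Upper boundedness is automatic: for $x$ large, $\dot x \leq \lambda+\|g\|_\infty - x^2<0$. Hence $\attr^x$ is a bounded solution, and by \autoref{thm::casesx} the system is in Case A or B. Case B requires $\lim_{t\to\infty}\attr^x(t)=-\sqrt{\lambda}$, which is excluded by the uniform lower bound. So we are in Case A, and \autoref{pro::xysamecase} transfers this to the $y$-system.

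For (ii), let $g\geq 0$ and $a\in(-\sqrt{\lambda},\sqrt{\lambda})$. Whenever $x_\kappa(t)\in[-\sqrt{\lambda},\sqrt{\lambda}]$ we have $\lambda-x_\kappa^2\geq 0$, and hence $\dot x_\kappa(t)\geq -g(t\kappa)$. I will argue by a first-hitting-time argument. Suppose $\tau>0$ is the first time with $x_\kappa(\tau)=-\sqrt{\lambda}$. Let $\sigma$ be the last time before $\tau$ at which $x_\kappa=\sqrt{\lambda}$, or $\sigma=0$ if there is none. Integrating over $[\sigma,\tau]$ gives
\begin{align*}
x_\kappa(\tau)\geq x_\kappa(\sigma)-\int_\sigma^\tau g(s\kappa)\dd s \geq a-\frac{1}{\kappa}\|g\|_{1;[0,\infty)} .
\end{align*}
For $\kappa>\kappa_2$ the right-hand side is strictly larger than $-\sqrt{\lambda}$, a contradiction. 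The same integration shows $x_\kappa(t)\geq a-\|g\|_{1}/\kappa>-\sqrt{\lambda}$ for all $t\geq0$, which is the uniform bound needed. This proves (ii), and (i) in that case.

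For (i) with a general $g\in\SET(\R^+)$, I will use comparison. Since $\dot x_\kappa\geq \lambda-|g(t\kappa)|-x_\kappa^2$, the solution $x_\kappa$ dominates the solution of the equation forced by $|g|$ started at $\tilde a\coloneqq\min\{a,0\}\in(-\sqrt{\lambda},\sqrt{\lambda})$ (or any point of that interval below $a$). Applying the argument of (ii) to this comparison equation gives the uniform lower bound for $\kappa>\||g|\|_1/(\sqrt{\lambda}+\tilde a)$. Making the comparison rigorous for scalar Carathéodory ODEs with \autoref{cond::1} is the only technical point, and it is standard.

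For (iii), I would use \autoref{thm::casesopen} rather than a borderline estimate. Every $\kappa>\kappa_2$ is in Case A, so $\kappa_2$ is a limit of Case A parameters. If $\kappa_2$ were in Case C, openness of the Case C set (\autoref{thm::casesopen}(ii)) would force nearby $\kappa>\kappa_2$ to be in Case C, a contradiction. Hence $\kappa_2$ is in Case A or B. In the setting of (ii) this can also be seen directly: the non-strict bound $x\geq -\sqrt{\lambda}$ gives boundedness.

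I expect the main obstacle to be the first-hitting-time bookkeeping in (ii): the solution may cross above $\sqrt{\lambda}$ and come back, which is why the integration starts at the last exit $\sigma$ and uses $x_\kappa(\sigma)\geq a$.
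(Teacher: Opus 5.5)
Your argument is correct, but it takes a different route from the paper's.

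\textbf{The paper's route.} The paper works in the $y$-system. For $g\ge 0$ it uses the constant level $L=-\sqrt{\lambda}+\delta_\kappa^+ +\varepsilon/2$ as a barrier; this is the final position of the unstable moving equilibrium plus a margin. It checks $\attr^y(0)>L$ and shows that the right-hand side of \eqref{eqq::y} is positive on $y=L$, because $g\ge 0$ keeps $L-H_\kappa(t)$ inside $(-\sqrt{\lambda},\sqrt{\lambda})$. Hence $\attr^y\ge L$, which rules out Cases B and C at once. For signed $g$ it uses a two-sided margin. For larger initial values it compares $\attr^y$ with the trajectory starting from $x(0)=0$.

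\textbf{Your route.} You stay in the $x$-system and integrate $\dot x_\kappa\ge -g(t\kappa)$ from the last visit to $\sqrt{\lambda}$ (or from $0$) up to the first visit to $-\sqrt{\lambda}$. This gives the explicit uniform bound $x_\kappa\ge a-\|g\|_{1;[0,\infty)}/\kappa$. You then handle signed $g$ and large $a$ in a single step, by comparing with the $|g|$-forced equation started at $\min\{a,0\}$.

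\textbf{What each buys.} The paper's version makes the moving-equilibrium picture of Figure~\ref{fig::moving_equilibria} literal. It can argue pointwise at a crossing time because the right-hand side of \eqref{eqq::y} is continuous in $t$. Yours needs only absolute continuity of Carath\'eodory solutions. It also gives an explicit $\kappa_2=\|g\|_{1;[0,\infty)}/(\sqrt{\lambda}+\min\{a,0\})$ in (i), and avoids the paper's case-by-case bookkeeping of $\varepsilon$.

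The comparison you call standard is in fact elementary here. The difference $w=x_\kappa-z$ satisfies $\dot w\ge-(x_\kappa+z)w$ a.e., so an integrating factor gives $w\ge 0$, even for equal initial data. For (iii), your use of openness of the Case C set (\autoref{thm::casesopen}(ii)) is the same mechanism as the paper's appeal to \autoref{thm::casesopen}(iii).

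\textbf{One small correction, to your aside on (iii).} At $\kappa=\kappa_2$ the hitting-time inequality only gives $x_\kappa(\tau)\ge-\sqrt{\lambda}$. That is no contradiction, and it does not stop the trajectory from dropping below $-\sqrt{\lambda}$ afterwards. So the ``non-strict bound'' does not follow as written. It does follow once you keep the term $\int_\sigma^\tau(\lambda-x_\kappa^2)\,\mathrm{d}s$. This term is strictly positive because $x_\kappa^2<\lambda$ on $(\sigma,\tau)$, which makes the inequality strict, so $x_{\kappa_2}$ never reaches $-\sqrt{\lambda}$. Your main argument via openness does not depend on this.
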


The intuition behind the proof is loosely based on forward basin stability (see \cite{Ashwin2017}) and is best explained for $\attr^x(0) \in (-\sqrt{\lambda}, \sqrt{\lambda})$ and $g\geq0$: The $y$-system \eqref{eqq::y} describes a nonautonomous ODE, where the equilibria of the time frozen system ($\pm \sqrt{\lambda} + \int_{-\infty}^{\Bar{t}} h(t;\kappa) \dd t$) at any time $\Bar{t}$, the moving equilibria, are moving with time. At time $t=0$, they arrive at $\pm \sqrt{\lambda} + \int_{-\infty}^{0} \alpha(t) \dd t$ and the assumption on $\attr^x(0)$ ensures that $\attr^y(0)$ is in between the two moving equilibria. Furthermore, between the two moving equilibria, $\attr^y$ has positive derivative. Note that $\int_0^t g(s\kappa) \dd s = \frac{1}{\kappa} \int_0^{t\kappa} g(l) \dd l$. Hence, upon varying $\kappa$ one can arbitrarily change how far the moving equilibria move after $t=0$. In particular, if one choses $\kappa>0$ large enough such that the unstable equilibrium moves less than the distance between $\attr^y(0)$ and the equilibrium at $t=0$ ($-\sqrt{\lambda} + \int_{-\infty}^0 \alpha(s) \dd s$), then the positive derivative between the moving equilibria ensures that $\attr^y$ stays in the basin of attraction of the stable moving equilibrium for all $t\geq0$. Therefore, $\lim_{t\to\infty}\attr^y = \sqrt{\lambda} +\int_\R h(t;\kappa) \dd t$, i.e.~is in Case A. This is visually illustrated in Figure \ref{fig::moving_equilibria}. \\

The formal proof has been moved to the Appendix.
\newline 
\begin{figure}[h!]
    \centering
    \includegraphics[width=0.99\linewidth]{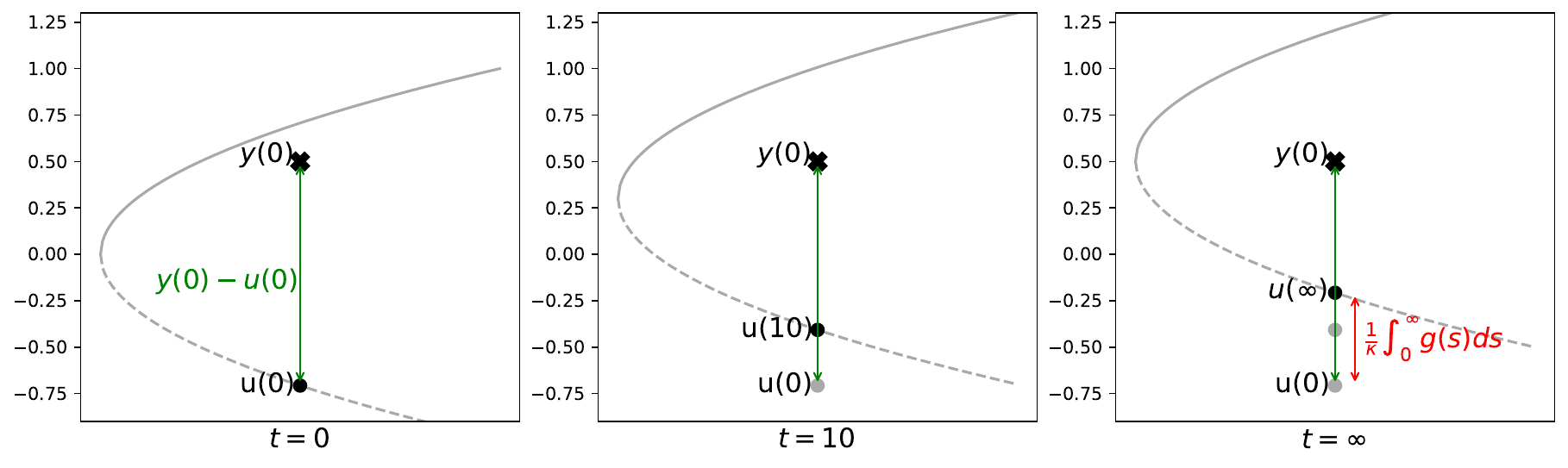}
    \caption{Moving equilibria for the \eqref{eqq::y}-system. $u(t)$ denotes the position of the unstable moving equilibrium at time $t$, $y(0)=\attr^y(0)$. The solid and dashed curved lines correspond to the bifurcation diagram of the time frozen system. We see that as long as $y(0)-u(0) \geq u(\infty) - u(0) = \frac{1}{\kappa} \int_0^\infty g(s) \dd s$, no tipping can occur. Solving for $\kappa$ gives the bound of \autoref{thm::largekappa} (ii).}
    \label{fig::moving_equilibria}
\end{figure}

The following Theorem shows that in the nonautonomous saddle node normal form \eqref{eqq::x}, there is at most one rate-induced transition between Cases A and C and further this transition is transversal, i.e. the transition interval, where \eqref{eqq::x}$_\kappa$ (resp. \eqref{eqq::y}$_\kappa$) is in Case B is a singleton.

\begin{theorem}\label{thm::transversaltipping}
    Let $\alpha \in \SET(\R)$ be such that $\attr^x(0) > 0$ and let $g \in \SET(\R^+)$.
    \begin{enumerate}[label=(\roman*)]
        \item Assume that $\kappa_1$ is such that \eqref{eqq::x}$_{\kappa_1}$ and \eqref{eqq::y}$_{\kappa_1}$ are in Case C. Then \eqref{eqq::x} and \eqref{eqq::y} are in Case C for any $\kappa<\kappa_1$.
        \item Assume there is $\Bar{\kappa}$ such that \eqref{eqq::x}$_{\Bar{\kappa}}$ and \eqref{eqq::y}$_{\Bar{\kappa}}$ are in Case B, then they are not in Case B for any $\kappa < \Bar{\kappa}$.
    \end{enumerate}
\end{theorem}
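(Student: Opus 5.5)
The plan is to work entirely on $t\ge 0$ with the initial value problem \eqref{eqq::initalvaluex}, where $x_\kappa(0)=\attr^x(0)=:a>0$ is independent of $\kappa$. By \autoref{thm::casesx}, Cases A, B, C can be read off from the forward behaviour of this single trajectory. In Case A we have $x_\kappa(t)\to\sqrt\lambda$, in Case B we have $x_\kappa(t)\to-\sqrt\lambda$ while staying bounded, and in Case C $x_\kappa$ reaches $-\infty$ at a finite time. To separate the effect of the rate from the forcing profile, I would pass to the fast time $\tau=t\kappa$ and set $X_\kappa(\tau)\coloneqq x_\kappa(\tau/\kappa)$. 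This gives
\begin{align*}
    X_\kappa'(\tau)=\tfrac{1}{\kappa}\bigl(\lambda-g(\tau)-X_\kappa(\tau)^2\bigr),\qquad X_\kappa(0)=a ,
\end{align*}
so $g$ no longer depends on $\kappa$ and $\kappa$ only enters as a time scale factor. Both (i) and (ii) then reduce to a monotonicity statement in $\kappa$ for $X_\kappa$ at fixed $\tau$. Concretely, I would aim to show that $\kappa\mapsto X_\kappa(\tau)$ is nondecreasing for each $\tau$ in the common existence interval, and strictly increasing once the trajectory has left the region $X>0$.

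For the monotonicity I would differentiate with respect to $\kappa$. The ODE is smooth in $(\kappa,X)$ for fixed measurable $g$, so Carathéodory differentiable dependence on parameters applies. The sensitivity $W\coloneqq\partial_\kappa X_\kappa$ solves $W'=-\tfrac{2X}{\kappa}W-\tfrac1\kappa X'$ with $W(0)=0$. Write $\Phi(\tau,\sigma)=\exp\bigl(-\tfrac2\kappa\int_\sigma^\tau X\bigr)$. Variation of constants followed by one integration by parts, which avoids differentiating $g$, gives
\begin{align*}
    W(\tau)=\tfrac1\kappa\Bigl(\Phi(\tau,0)\,a-X(\tau)+\tfrac{2}{\kappa}\int_0^\tau \Phi(\tau,\sigma)X(\sigma)^2\,\dd\sigma\Bigr).
\end{align*}
The positivity $a>0$ enters exactly here. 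Whenever $X(\tau)\le 0$, every term in the bracket is nonnegative and $\Phi(\tau,0)a>0$, so $W(\tau)>0$. Hence on the set where the trajectory is negative, which is the only place where tipping or convergence to $-\sqrt\lambda$ can happen, increasing $\kappa$ strictly raises the trajectory.

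With monotonicity established, the two claims follow by comparison. For (i), suppose $\kappa<\kappa_1$ and $\kappa_1$ is in Case C. Then $X_{\kappa_1}$ blows down to $-\infty$ at some finite $\tau_f$, and it is negative on some interval $(\tau^\ast,\tau_f)$. Integrating $W$ over $[\kappa,\kappa_1]$ gives $X_\kappa\le X_{\kappa_1}$ on that interval, at least up to the first time the smaller-$\kappa$ trajectory has itself blown down. So $X_\kappa$ must also reach $-\infty$ no later than $\tau_f$, and by \autoref{thm::casesx} this is Case C. For (ii), let $\bar\kappa$ be in Case B, so $X_{\bar\kappa}(\tau)\to-\sqrt\lambda$ with $X_{\bar\kappa}<0$ for all large $\tau$. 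Suppose some $\kappa<\bar\kappa$ were also in Case B. Strict positivity of $W$ on the negative region gives $X_\kappa<X_{\bar\kappa}$ there, with a gap that does not close. However, near $-\sqrt\lambda$ the Case B solution is the unique bounded solution, a repeller-type trajectory that is not hyperbolic. Any trajectory starting strictly below it at a late time, for an equation with the same asymptotic limit $\lambda-X^2$, is pushed below $-\sqrt\lambda-\varepsilon$ and then blows down. This contradicts Case B for $\kappa$. Alternatively, by (i) it suffices to rule out $\kappa\in(\kappa',\bar\kappa)$ being in Case B, and then \autoref{thm::casesopen} with an open/closed argument finishes it.

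The main obstacle is the sign of $W$ on the stretch where $X>0$. There the bracket can be negative, because larger $\kappa$ means slower relaxation in $\tau$ time and the trajectory may be lower for a while. To get around this I would show that on any interval where both trajectories stay positive neither can tip, so only the times after the first zero of the larger-$\kappa$ trajectory matter. I would then compare from that time onward, using the explicit formula above, in which the term $\tfrac{2}{\kappa}\int_0^\tau \Phi X^2$ accumulated earlier is already nonnegative. Making this restart rigorous is the delicate step, in particular because the two trajectories cross zero at different $\tau$. If it fails, the fallback is to use the Riccati substitution $x=\varphi'/\varphi$ with $\varphi''=(\lambda-g(t\kappa))\varphi$, $\varphi(0)=1$, $\varphi'(0)=a$. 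Tipping then means that $\varphi$ has a zero, and I would look for a Sturm-type comparison in the rescaled time.
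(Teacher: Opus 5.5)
Your sensitivity formula for $W=\partial_\kappa X_\kappa$ is correct, but you attach it to the wrong monotone quantity, and neither part closes. The property you aim for, that $\kappa\mapsto X_\kappa(\tau)$ is nondecreasing, fails where $X>0$. For $g\equiv 0$ and $0<a<\sqrt{\lambda}$ one has $X_\kappa(\tau)=\sqrt{\lambda}\tanh(\sqrt{\lambda}\,\tau/\kappa+c)$, which decreases in $\kappa$. The ``restart at the zero crossing'' meant to repair this is left unproved. So in (i), integrating $W$ over $[\kappa,\kappa_1]$ at fixed $\tau$ is not justified: it needs $X_{\kappa''}(\tau)\le 0$, and existence of $X_{\kappa''}$ on $[0,\tau]$, for every intermediate $\kappa''$, and you establish neither.

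Part (ii) has a more basic problem. The ordering $X_\kappa<X_{\bar\kappa}$ gives no contradiction in the limit, because in Case B both trajectories tend to the same value $-\sqrt{\lambda}$. Hence your claim that the gap ``does not close'' is unsupported. Your argument about trajectories starting below the non-hyperbolic bounded solution applies to solutions of one and the same equation. Here $X_\kappa$ and $X_{\bar\kappa}$ solve different equations, with time factors $1/\kappa$ and $1/\bar\kappa$. Finally, \autoref{thm::casesopen} only shows that the set of Case B rates is closed, so it cannot rule out a whole interval of such rates.

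The missing idea is to follow $Z_\kappa(\tau)=\kappa X_\kappa(\tau)$ instead of $X_\kappa$. Your own formula gives $\partial_\kappa Z_\kappa(\tau)=X+\kappa W=\Phi(\tau,0)\,a+\frac{2}{\kappa}\int_0^\tau\Phi(\tau,\sigma)X(\sigma)^2\,d\sigma>0$ for every $\tau$, with no sign condition on $X$. More directly, $Z_\kappa$ solves $Z'=\lambda-g(\tau)-Z^2/\kappa^2$ with $Z(0)=\kappa a$. Both the right-hand side and the initial value increase with $\kappa$, the latter precisely because $a>0$. So \autoref{thm::comparison} gives $Z_\kappa<Z_{\kappa'}$ for $\kappa<\kappa'$ on the common existence interval.

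This settles (i) including blow-down: if $Z_{\kappa_1}\to-\infty$ at $\tau_f$, then $Z_\kappa$ cannot exist past $\tau_f$. For (ii), Case B gives $Z_\kappa\to-\kappa\sqrt{\lambda}$ and $Z_{\bar\kappa}\to-\bar\kappa\sqrt{\lambda}$. Hence $-\kappa\sqrt{\lambda}\le-\bar\kappa\sqrt{\lambda}$, contradicting $\kappa<\bar\kappa$.

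This is exactly what the paper's proof does. After the Riccati substitution and rescaling, its Prüfer angle satisfies $\cot\theta_\kappa=\kappa X_\kappa$, and it compares the angles $\theta_\kappa$, which decrease in $\kappa$ and never blow up. The correct initial value is $\theta_\kappa(0)=\arctan\bigl(1/(\kappa\mu^x)\bigr)$; the $\kappa^3$ in the paper is a rescaling slip that does not affect monotonicity. Your Riccati/Sturm fallback works only with this weighted angle. The plain angle $\psi=\mathrm{arccot}\,X$ satisfies $\psi'=\frac{1}{\kappa}\bigl(\cos^2\psi+(g-\lambda)\sin^2\psi\bigr)$, which is not monotone in $\kappa$.
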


\begin{proof}
    We argue via the \eqref{eqq::x} system. The results for \eqref{eqq::y} follow from \autoref{pro::xysamecase}. 
    Fix $\alpha\in \SET(\R), \ g \in \SET(\R^+)$ with $\alpha$ such that $\attr^x(0) > 0$ and let $\kappa<\kappa_1$. Note, for $t\geq 0$, $\attr_\kappa^x$ is the solution $x_\kappa$ to 
    \begin{align*}
        \dot{x}_\kappa(t) = \lambda - g(t\kappa) - x_\kappa^2(t) \qquad x_\kappa(0) = \attr_\kappa^x(0) \eqqcolon \mu^x.
    \end{align*}
    (i) By \autoref{thm::casesx} Case C means that $x_\kappa$ has a finite time blowup to $-\infty$. We define $\phi$ by $x_\kappa = \phi_\kappa' / \phi_\kappa$. Then $\phi_\kappa$ follows the second order linear ordinary differential equation
    \begin{align*}
        \phi_\kappa''(t) = (\lambda - g(t\kappa)) \phi_\kappa(t) \qquad\qquad \phi_\kappa(0) \mu^x = \phi_\kappa'(0).
    \end{align*}
    for $\phi(0) \neq 0$. Note, there is nothing mathematically wrong with choosing $\phi(0) =0$. However, then $\phi(t)=\phi'(t) = \phi''(t)=0$ for all $t\geq0$ and we lose any interpretability of $x=\phi'/\phi =0/0$. After time-rescaling $\varphi_\kappa (t) = \phi_\kappa(t/\kappa)$, we get
    \begin{align*}
        \varphi_\kappa''(t) = \frac{\lambda - g(t)}{\kappa^2} \varphi_\kappa(t) \qquad \qquad \varphi_\kappa(0) \mu^x \kappa = \varphi_\kappa'(0).
    \end{align*}
    Since $g\in L^\infty(\R^+)$, and therefore $\phi_\kappa'$ (resp. $\varphi_\kappa'$) grow at most exponentially, the finite time explosions of $x_\kappa$ correspond to the roots of $\phi_\kappa$ (resp. $\varphi_\kappa$). We show that if $\varphi_{\kappa_1}$ has a root for $\kappa_1>0$ (i.e.~$x_{\kappa_1}$ is in Case C), then $\varphi_\kappa$ also has a root for any $0<\kappa<\kappa_1$ (i.e.~$x_\kappa$ is in Case C). To do so, we apply the Pr\"ufer transformation (see, e.g.~\cite{pruefer, Pruefer1926})
    \begin{align*}
        \varphi_\kappa(t) = \rho_\kappa(t) \sin(\theta_\kappa(t)) \qquad \varphi_\kappa'(t) = \frac{1}{\kappa^2} \rho_\kappa(t) \cos(\theta_\kappa(t)),
    \end{align*}
    where it can be shown that without loss of generality $\rho_\kappa > 0$ (\cite{Pruefer1926}). In particular, this means that the roots of $\varphi_\kappa$ (and therefore the finite time explosions of $x_\kappa$) correspond to the times $t\geq 0$, where $\theta_\kappa(t) = k \pi$ for $k\in\mathbb{Z}$.  Moreover, $\theta$ follows the ODE (\cite{pruefer})
    \begin{align}\label{eqq::proof::theta}
        \theta_\kappa'(t) = \frac{1}{\kappa^2} \cos^2(\theta_\kappa(t)) + (g(t) - \lambda) \sin^2(\theta_\kappa(t)).
    \end{align}

    Further, the initial condition transforms as follows: 
    \begin{align*}
       \varphi_\kappa(0) \mu^x \kappa &= \varphi_\kappa'(0) 
       \Longleftrightarrow \mu^x \kappa \rho_\kappa(0)\sin(\theta_\kappa(0)) = \frac{1}{\kappa^2} \rho_\kappa(0)\cos(\theta_\kappa(0)) \\
       &\Longleftrightarrow \theta_\kappa(0) = \arctan\left(\frac{1}{\kappa^3 \mu^x}\right) + l \pi \qquad l \in \mathbb{Z} .
    \end{align*}
    It is easy to see that \eqref{eqq::proof::theta} is $\pi$-periodic, hence we can, without loss of generality, take $l = 0$ and obtain
    \begin{align}\label{eqq::proof::thetafinal}
        \theta_\kappa'(t) = \frac{1}{\kappa^2} \cos^2(\theta_\kappa(t)) + (g(t) - \lambda) \sin^2(\theta_\kappa(t)) \qquad \theta_\kappa(0) = \arctan\left(\frac{1}{\kappa^3 \mu^x}\right).
    \end{align}
    Note that $\theta_\kappa(0) \in (0,\pi/2)$, since $\mu^x,\kappa>0$. \\
    We show two things: First, the solution $\theta_\kappa$ to \eqref{eqq::proof::thetafinal} is decreasing in $\kappa$ and second that $\theta_\kappa(t) > 0$ for all $\kappa>0$ and all $t\geq0$. Together, this shows that if $x_{\kappa_1}$ is in Case C (i.e.~$\theta_{\kappa_1}$ crosses $k\pi$ for some $k\in\mathbb{Z}$), then the continuity of $\theta_{\kappa_1}$ implies that there exists $\Bar{t} >0$ such that $\theta_{\kappa_1}(\Bar{t}) = \pi$. However, since \eqref{eqq::proof::thetafinal} is decreasing in $\kappa$, this means that $\theta_{\kappa}(\Bar{t}) \geq \pi$ for all $0<\kappa<\kappa_1 $. In particular, by the continuity of $\theta_\kappa$ this means that for all $0<\kappa<\kappa_1$ there also exists $t$ s.t.~$\theta_\kappa (t) = \pi$, i.e.~$x_\kappa$ is in Case C.
    \\\\
    For the first part, note that since $\theta_\kappa'$ and $\theta_\kappa(0)$ are decreasing in $\kappa$, we find that $\theta_\kappa < \theta_{\Bar{\kappa}}$ for $\Bar{\kappa} < \kappa$, by the comparison principle \autoref{thm::comparison}. We now show that $\theta_\kappa (t) > 0$ for all $t \geq 0$. For that, define
    \begin{align*}
        m \coloneqq \left\{\begin{array}{cc}
           \inf_{t\geq 0} g(t)  & \inf_{t\geq 0} g < 0 \\
            0 & \inf_{t\geq 0} g \geq 0
        \end{array}\right. 
        \qquad\qquad \Tilde{g} = g - m \geq 0
        \qquad\qquad \Tilde{\lambda} = \lambda - m > 0.
    \end{align*}
    Then \eqref{eqq::proof::thetafinal} becomes 
    \begin{align}\label{eqq::proof::thetatilde}
        \theta_\kappa'(t) = \frac{1}{\kappa^2} \cos^2(\theta_\kappa(t)) + (\Tilde{g}(t) - \Tilde{\lambda}) \sin^2(\theta_\kappa(t)) \qquad \theta_\kappa(0) = \arctan\left(\frac{1}{\kappa^3 \mu^x}\right).
    \end{align}
    Moreover, define 
    \begin{align}\label{eqq::proof::eta}
        \eta_\kappa'(t) = \frac{1}{\kappa^2} \cos^2(\eta_\kappa(t)) - \Tilde{\lambda} \sin^2(\eta_\kappa(t)) \qquad \eta_\kappa(0) = \arctan\left(\frac{1}{\kappa^3 \mu^x}\right).
    \end{align}
    Then $\eta_\kappa$ is a scalar autonomous ODE and further $\eta_\kappa (t) \leq \theta_\kappa(t)$ for all $t\geq 0$ by the comparison principle \autoref{thm::comparison} and \autoref{app::prop::cond_allsystems}, since $\Tilde{g}\geq 0$. Further, $\eta_\kappa$ has two stationary points in $(-\pi/2,\pi/2)$ given by
    \begin{align*}
        \eta_\kappa^\pm = \pm \arctan \left( \frac{1}{\kappa\sqrt{\Tilde{\lambda}}}\right).
    \end{align*}
    Moreover, the positive one is attracting as 
    \begin{align*}
        \partial_{\eta_\kappa} \left(\eta_\kappa' \right)\bigg|_{\eta_\kappa^+} &= \left(-2\frac{1}{\kappa^2} \cos(\eta_\kappa) \sin(\eta_\kappa) - 2\Tilde{\lambda} \cos(\eta_\kappa)\sin(\eta_\kappa)  \right)\bigg|_{\eta_\kappa^+} = -2\left(\frac{1}{\kappa^2} + \Tilde{\lambda}\right) \cos^2(\eta_\kappa^+) \tan(\eta_\kappa^+) \\
        &= -2\left(\frac{1}{\kappa^2} + \Tilde{\lambda}\right)\frac{1}{\kappa \sqrt{\Tilde{\lambda}}} \cos^2(\eta_\kappa^+) < 0.
    \end{align*}
    Since $\eta_\kappa(0) > 0$, it follows that $\eta_\kappa(t) >0$ for all $t\geq 0$ and therefore
    \begin{align*}
        0< \eta_\kappa(t) \leq \theta_\kappa(t) \qquad \forall \  t \geq 0.
    \end{align*}

    (ii) By \autoref{thm::casesx}, Case B means that $\lim_{t\to\infty} x_{\Bar{\kappa}} = - \sqrt{\lambda}$. By the same steps as in (i) it follows that
    \begin{align*}
        - \sqrt{\lambda} &= \lim_{t\to\infty} x_{\Bar{\kappa}} 
        = \lim_{t\to\infty} \frac{\phi_{\Bar{\kappa}}'(t)}{\phi_{\Bar{\kappa}}(t)} 
        = \lim_{t\to\infty} \frac{\phi_{\Bar{\kappa}}'(t/\Bar{\kappa})}{\phi_{\Bar{\kappa}}(t/\Bar{\kappa})} 
        = \lim_{t\to\infty} \frac{\Bar{\kappa}}{\Bar{\kappa}} \frac{\phi_{\Bar{\kappa}}'(t/\Bar{\kappa})}{\phi_{\Bar{\kappa}}(t/\Bar{\kappa})} 
        = \lim_{t\to\infty} \Bar{\kappa} \frac{\varphi_{\Bar{\kappa}}'(t)}{\varphi_{\Bar{\kappa}}(t)} \\
        &= \lim_{t\to\infty} \frac{\rho_{\Bar{\kappa}}(t) \cos(\theta_{\Bar{\kappa}})}{\Bar{\kappa} \rho_{\Bar{\kappa}}(t) \sin(\theta_{\Bar{\kappa}})}
        = \lim_{t\to\infty} \frac{\cos(\theta_{\Bar{\kappa}}(t)) }{\Bar{\kappa} \sin(\theta_{\Bar{\kappa}}(t))}.
    \end{align*}
    In particular, this means that 
    \begin{align*}
        \lim_{t\to\infty}\theta_{\Bar{\kappa}}(t) = - \arctan
        \left(\frac{1}{\kappa \sqrt{\lambda}}\right) + k \pi \qquad k \in \mathbb{Z}.
    \end{align*}
    However, $k=1$, since otherwise $\theta_{\Bar{\kappa}}$ must cross either $0$ or $\pi$, in which case \eqref{eqq::x} would be in Case C.
    Assume there exists a $\kappa < \Bar{\kappa}$ such that $x_\kappa$ is in Case B and recall from (i) that $\theta_\kappa(t) \geq \theta_{\Bar{\kappa}}(t)$ for all $t\geq 0$. This implies that 
    \begin{align*}
        - \arctan \left(\frac{1}{\Bar{\kappa} \sqrt{\lambda}}\right) + \pi 
        =\lim_{t\to \infty} \theta_{\Bar{\kappa}}(t)
        \leq \lim_{t\to \infty} \theta_\kappa (t)
        = - \arctan \left(\frac{1}{\kappa \sqrt{\lambda}}\right) + \pi
    \end{align*}
    contradicting 
    \begin{align*}
        - \arctan \left(\frac{1}{\Bar{\kappa} \sqrt{\lambda}}\right) > - \arctan \left(\frac{1}{\kappa \sqrt{\lambda}}\right)
    \end{align*}
    for $\kappa < \Bar{\kappa}$.
\end{proof}

\begin{remark}
The assumption $\attr^x(0) > 0$ is necessary here, since the initial condition of \eqref{eqq::proof::thetatilde} is otherwise not any longer decreasing in $\kappa$. However, we expect that extensions to $\attr^x(0) \in \R$ are possible, e.g.~by considering the time reversed system (then one gains an additional negative sign in the initial condition) and exploiting that a trajectory can only be hyperbolic for \eqref{eqq::x} if it is hyperbolic for the time reversed system.
\end{remark}

In the following corollary, we combine the results obtained in the previous theorems to obtain our main existence result of a unique critical rate, where \eqref{eqq::x} is in Case B, which separates Cases A and C.

\begin{corollary}\label{cor::transversalTipping}
    Assume that $\alpha\in \SET(\R)$ is such that the attractor $\attr^x$ of \eqref{eqq::x}, guaranteed by \autoref{thm::casesx}, satisfies $\attr^x(0) > 0$. If $g$ is such that there exists a $\kappa_1>0$ such that \eqref{eqq::x}$_{\kappa_1}$ is in Case C, then there exists a unique $\Bar{\kappa} > \kappa_1$ with:
    \begin{itemize}
        \item \eqref{eqq::x}$_{\kappa}$ is in Case A for $\kappa > \Bar{\kappa}$;
        \item \eqref{eqq::x}$_{\kappa}$ is in Case B for $\kappa = \Bar{\kappa}$;
        \item \eqref{eqq::x}$_{\kappa}$ is in Case C for $\kappa < \Bar{\kappa}$.
    \end{itemize}
    i.e.~there exists a unique transversal transition from Case A to C via Case B at rate $\Bar{\kappa}$.
\end{corollary}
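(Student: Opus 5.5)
The plan is to combine \autoref{thm::casesopen}, \autoref{thm::largekappa} and \autoref{thm::transversaltipping}, working throughout with the $x$-system; the $y$-system is equivalent by \autoref{pro::xysamecase}.

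\textbf{Step 1: existence of a Case B rate.} Since $\attr^x(0)>0>-\sqrt{\lambda}$, \autoref{thm::largekappa} gives a $\kappa_2$ such that \eqref{eqq::x}$_\kappa$ is in Case A for every $\kappa>\kappa_2$. Fix some $\kappa_3>\max(\kappa_1,\kappa_2)$. Then \eqref{eqq::x}$_{\kappa_1}$ is in Case C and \eqref{eqq::x}$_{\kappa_3}$ is in Case A, so \autoref{thm::casesopen}(iii) yields a $\Bar{\kappa}\in(\kappa_1,\kappa_3)$ with \eqref{eqq::x}$_{\Bar{\kappa}}$ in Case B.

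\textbf{Step 2: uniqueness.} Suppose $\Bar{\kappa}'\neq\Bar{\kappa}$ is also a Case B rate, and without loss of generality $\Bar{\kappa}'<\Bar{\kappa}$. This directly contradicts \autoref{thm::transversaltipping}(ii), which applies because $\attr^x(0)>0$. So $\Bar{\kappa}$ is unique.

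\textbf{Step 3: Case C below $\Bar{\kappa}$.} Let $\kappa<\Bar{\kappa}$. By Step 2 the system is not in Case B at $\kappa$. Suppose it were in Case A. Then $\kappa_1$ and $\kappa$ must be ordered. If $\kappa_1<\kappa$, then $\kappa_1$ is Case C and $\kappa$ is Case A, so \autoref{thm::casesopen}(iii) gives a Case B rate in $(\kappa_1,\kappa)$, which lies below $\Bar{\kappa}$ and contradicts uniqueness. If instead $\kappa\le\kappa_1$, then $\kappa<\kappa_1$ because the two cases differ, and \autoref{thm::transversaltipping}(i) forces Case C at $\kappa$, a contradiction. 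Hence every $\kappa<\Bar{\kappa}$ is in Case C.

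\textbf{Step 4: Case A above $\Bar{\kappa}$.} Let $\kappa>\Bar{\kappa}$. Again it is not Case B. If it were Case C, \autoref{thm::transversaltipping}(i) would force Case C at every smaller rate, including $\Bar{\kappa}$, which is a contradiction. By the trichotomy of \autoref{thm::casesx}, $\kappa$ is therefore in Case A.

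Finally, $\Bar{\kappa}>\kappa_1$ holds by construction in Step 1.

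The main subtlety is the bookkeeping in Step 3. One must use (iii) of \autoref{thm::casesopen} on the correct interval rather than appealing to monotonicity of Case A, which has not been proved directly. Everything else is a direct citation of the earlier results.
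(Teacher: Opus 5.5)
Your proof is correct and follows the same route as the paper: \autoref{thm::largekappa} gives Case A at large rates, \autoref{thm::casesopen}(iii) produces a Case B rate in $(\kappa_1,\kappa_3)$, and \autoref{thm::transversaltipping}(ii) and (i) give uniqueness and the ordering of the cases. Your Steps 3 and 4, in particular the case split $\kappa_1<\kappa$ versus $\kappa<\kappa_1$, spell out the Case A above / Case C below partition that the paper's short proof only asserts.
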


\begin{proof}
    By \autoref{thm::largekappa} (i), there exists $\kappa_2$ such that \eqref{eqq::x}$_\kappa$ is in Case A for $\kappa > \kappa_2$. Therefore, by \autoref{thm::casesopen}, there is a transition from Case A to C. By \autoref{thm::transversaltipping} (i), there is only one such transition. By \autoref{thm::casesopen}, the existence of such a transition implies that there exists a $\Bar{\kappa}$ such that \eqref{eqq::x}$_{\Bar{\kappa}}$ is in Case B. By \autoref{thm::transversaltipping} (ii) this $\Bar{\kappa}$ is unique.
\end{proof}

\begin{remark}
By \autoref{pro::xysamecase}, the same holds for \eqref{eqq::y} with $\attr^y(0) > 0 + \int_{-\infty}^0 \alpha(t) \dd t$, however we only need the result for \eqref{eqq::x}. One might wonder how large the group of functions $g$ is where \autoref{cor::transversalTipping} can be applied, i.e.~where there exist $\kappa_1$ such that \eqref{eqq::x}$_{\kappa_1}$ is in Case C. One can prove that this at least includes all $g \in \SET(\R^+)$ with $l\{t: g(t) >\lambda\}>0$ (see Appendix \autoref{pro::smallKappa_underCond}).
\end{remark}

\autoref{cor::transversalTipping} allows us to define a function $\Kappa$ that maps $g\in \SET(\R^+)$ to the corresponding unique critical rate $\Bar{\kappa}$, whenever the \autoref{cor::transversalTipping} can be applied.

\begin{definition}\label{def::Kappa}
Given $\alpha$ such that $\attr^x (0) > 0$, we define the \textit{rate function}
\begin{align*}
    \Kappa: (0,\infty) \times \SET(\R) \times \SET(\R^+) \longrightarrow (0,\infty), \ \Kappa(\lambda, \alpha, g) = \left\{\begin{array}{cc}
       \Bar{\kappa}(\lambda, \alpha,g)  &  \text{\autoref{cor::transversalTipping} applies}\\
       1  &  \text{else,}
    \end{array} \right. 
\end{align*}
where $\Bar{\kappa} = \Bar{\kappa}(\lambda, \alpha,g)$ is the unique transition rate from \autoref{cor::transversalTipping} such that \eqref{eqq::x}$_{\Bar{\kappa}}$ is in Case B.
\end{definition}

Note that \autoref{def::Kappa} ensures that the attractor $\attr^x_{\Kappa(\lambda, \alpha,g)}$ of the x-system \eqref{eqq::x}
\begin{align*}
    \dot{x}(t) = \lambda - h(t; \Kappa(\lambda, \alpha,g)) - x^2(t)
\end{align*}
is 
\begin{itemize}
    \item in Case B, whenever \autoref{cor::transversalTipping} applies,
    \item in Case A or B, whenever \autoref{cor::transversalTipping} does not apply.
\end{itemize}
In particular, $\attr^x_{\Kappa(\lambda, \alpha,g)}$ is always bounded.

\clearpage
\section{Reformulation of the optimal control problem} \label{sec::reformOCproblem}

Let $\lambda >0$ and $\alpha \in \SET(\R)$ such that the solution to the uncontrolled equation
\begin{align}\label{eqq::sec3uncontrolled}
    \dot{x}(t) = \lambda - \alpha(t) - x^2(t) \qquad x(0) = \mu^x
\end{align}
does not exist on the whole of $\R$.
The optimal control problem that we consider is 
\begin{align}\tag{\textbf{OCP}}%\label{prob::OCP}
        \min_{u \in \SET(\R^+)} \int_0^\infty u^2(t) \dd t \qquad s.t.~
        \left\{ 
            \begin{array}{lll}
            &\dot{x}(t) = \lambda - \alpha(t) + u(t) - x^2(t)  \\
            & x(0) = \mu^x >0 \\
            &\exists  \  M \in \N \ s.t.~\ x(t) \geq -M \quad \forall \  t \in \R.
            \end{array}
        \right.
\end{align}
Note that our assumption on $\alpha$ guarantee that a control is necessary in the first place. We further define the \textit{controlled equation}
\begin{align}\label{eqq::sec3controlled}
    \dot{x}(t) = \lambda - \alpha(t) + u(t) - x^2(t).
\end{align}

The nonautonomous theory developed in Section \ref{sec::nonautonomous_fold} concerns itself with the properties of the pullback attractor on the whole real line and therefore, in particular, with the dynamics on $(-\infty,0)$. This stands in contrast to optimal control, where one observes a state $\mu^x$ at $t=0$ and the goal is to control it on $[0,\infty)$. To still be able to apply the theory developed in Section \ref{sec::nonautonomous_fold} one needs to connect both viewpoints. In that regard, note that for any $\mu^x > 0$ the trajectory $x(t,0,\mu^x)$ solving
\begin{align*}
    \dot{x} = \lambda - \alpha(t) + u(t) - x^2 \qquad x(0)=\mu^x
\end{align*}
 can be identified with the pullback attractor of 
 \begin{align*}
    \dot{x} = \lambda - \alpha(t) + u(t) - x^2 \qquad s.t. \qquad \left\{\begin{array}{cc}
       \textit{supp}(u) \subset [0,\infty)  &  \\
        \alpha(t) = \lambda + 1  & t \in [-T,0),
    \end{array}\right. 
\end{align*}
where $T = (\arctan(\sqrt{\lambda}) - \arctan(\mu^x))$.
To see this, first note that if $\alpha$ is supported on $[-T,\infty)$, then $\attr^x$ is the trajectory with $\attr^x(-T) = \sqrt{\lambda}$. This is a consequence of $\attr^x$ being the only solution that goes to $\sqrt{\lambda}$ as $t\to-\infty$ (cf. \autoref{thm::casesx}). Further, the solution of
\begin{align*}
    \dot{z} = -1 -z^2 \qquad z(-T) = \sqrt{\lambda}
\end{align*}
is given by
\begin{align*}
    z(t) = \tan(\arctan(\sqrt{\lambda}) - (T+t)).
\end{align*}
It follows that $z(0) = \mu^x$. Combined with the uniqueness of solutions, this shows the claim.
Keeping this in mind, we can, without loss of generality, use the statements of Section \ref{sec::nonautonomous_fold} for trajectories $x(t)$ with $x(0)=\mu^x >0$. In particular, any statements in Sections \ref{sec::reformOCproblem} and \ref{sec::analyticalsol} are written in terms of trajectories of \eqref{eqq::sec3controlled} with $x(0) = \mu^x$ and not in terms of pullback attractors.
\\
\\
This section is split into two parts: In the first part, Section \ref{sec::reformsubsec}, we reformulate \eqref{prob::OCP} into an unconstrained minimization problem using the rate function $\Kappa$ from \autoref{def::Kappa}. 

Section \ref{sec::rateApproxTheory} is devoted to the study of how the optimal cost obtained changes with approximations of $\Kappa$. We find that it is continuous and, under mild assumptions, even locally Lipschitz in the approximation. 

We start by showing that \eqref{prob::OCP} has a solution.

\begin{proposition}\label{pro::ocp_has_solution}
    The optimal control problem \eqref{prob::OCP} has a solution.
\end{proposition}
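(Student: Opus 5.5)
The plan is the direct method of the calculus of variations on $L^2(\R^+)$. Let $J(u) = \int_0^\infty u^2(t)\dd t$ and denote by $\mathcal{U}$ the admissible set of controls $u\in\SET(\R^+)$ whose trajectory of \eqref{eqq::sec3controlled} with $x(0)=\mu^x$ exists for all $t\geq 0$ and is bounded below.

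\textbf{Nonemptiness.} The admissible set is nonempty. Choose $u=\alpha|_{\R^+}$. Then $\dot x = \lambda - x^2$ with $x(0)=\mu^x>0$, and this trajectory converges to $\sqrt{\lambda}$, so it is bounded. Hence $m\coloneqq \inf_{\mathcal{U}} J \leq \|\alpha\|_{2;\R^+}^2<\infty$.

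\textbf{Compactness of a minimizing sequence.} Take a minimizing sequence $(u_n)\subset\mathcal{U}$. It is bounded in $L^2(\R^+)$, so after passing to a subsequence $u_n\rightharpoonup u^*$ weakly in $L^2$, and $J(u^*)\leq\liminf J(u_n)=m$ by weak lower semicontinuity of the norm. Membership of $u^*$ in $\SET(\R^+)=L^1\cap L^\infty$ is not automatic from an $L^2$ bound. I would handle this by reducing to controls with an a priori $L^\infty$ and $L^1$ bound without increasing the cost. Replace $u_n$ by its truncation $u_n\indicator_{\{u_n\geq 0\}}$, capped at $\max(\alpha,0)$ plus a constant. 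Lowering a negative $u$ to $0$ only increases $\dot x$, so by the comparison principle (\autoref{thm::comparison}) the trajectory stays bounded below. I also expect $u$ to be needed only where $\alpha$ is active, i.e.\ $u\leq \alpha^+$ should suffice: the control $\min(u,\alpha^+)$ gives $\dot x \geq \lambda - x^2 - (\alpha-\min(u,\alpha^+))$. Making this cap rigorous is not immediate. Capping $u$ at $\alpha^+$ \emph{decreases} $\dot x$ where $u>\alpha^+$. Instead I would compare with the trajectory of the capped control and argue that on the set where $u>\alpha^+$ the capped dynamics satisfy $\dot x\geq \lambda - x^2$, which keeps $x$ above $-\sqrt{\lambda}$ locally. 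If this fails, I would simply add the constraint $\|u\|_{\infty}\le C$ with $C$ large, since the minimizer is expected to satisfy it. With $0\leq u_n\leq \alpha^+$ we get uniform $L^1\cap L^\infty$ bounds, and these pass to the weak limit, so $u^*\in\SET(\R^+)$.

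\textbf{Admissibility of the limit.} This is the main obstacle. Let $x_n$ be the trajectories. On each compact $[0,T]$ the right-hand side depends on $u$ only through $\int_0^t u$, and weak convergence gives $\int_0^t u_n\to\int_0^t u^*$ uniformly on $[0,T]$ (Arzel\`a–Ascoli with the uniform $L^2$ bound). Using the $y$-type transformation $y = x + \int_0^t(\alpha-u)$, the equation becomes $\dot y = \lambda - (y - G_n(t))^2$ with $G_n\to G^*$ uniformly on compacts. Continuous dependence then gives $x_n\to x^*$ locally uniformly, as long as $x^*$ stays bounded. The difficulty is the uniform lower bound. I would use the fact that $\attr^x$ bounded corresponds to Case A or B, and that the Case C set is open (\autoref{thm::casesopen}(ii), whose proof rests on continuity of $\lambda^*$ in the $L^\infty$ topology of $H$). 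For this I need $H_{u_n}\to H_{u^*}$ uniformly on all of $\R$, and the uniform $L^1$ bound, even with tails, is not enough. So I would additionally show the tails $\int_T^\infty |u_n|$ are uniformly small. This holds if $u_n\leq\alpha^+$, since $\alpha\in L^1$ gives domination. Given uniform convergence of $H$, the limit cannot be in Case C, because Case C is an open condition. The limit trajectory is therefore bounded, $u^*\in\mathcal{U}$, and $J(u^*)=m$.
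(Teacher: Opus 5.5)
\textbf{There is a genuine gap}, at the step you yourself identify as the main obstacle. Everything after the compactness step rests on the cap $0\le u_n\le\alpha^+$. (The positive-part truncation before it is fine.) The cap is what gives $u^*\in\SET(\R^+)$. It also gives the uniform tail bound on $\int_T^\infty u_n$, which you need for $H_{u_n}\to H_{u^*}$ uniformly on all of $\R$, and even for $H_{u^*}$ to be bounded. Without that convergence you cannot invoke the openness of Case C from \autoref{thm::casesopen} and \autoref{thm::lambdastar}.

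The cap is not merely hard to justify; it is false in general, because efficient controls anticipate the forcing. Take $\alpha=A\indicator_{[1,1+\varepsilon]}$ with $S\coloneqq A\varepsilon$ large and fixed, and $\varepsilon$ small.
\begin{itemize}
\item Any control supported in $[1,1+\varepsilon]$ leaves $x(1)\le\max(\mu^x,\sqrt{\lambda})$. It must therefore satisfy $\int u\ge D>0$, with $D$ independent of $\varepsilon$, so by Cauchy--Schwarz its cost is at least $D^2/\varepsilon$.
\item A control acting on $[0,1]$, where $\alpha=0$, can raise $x(1)$ above $S+\sqrt{\lambda}$ at a cost independent of $\varepsilon$.
\end{itemize}
So for small $\varepsilon$ the infimum over $\{0\le u\le\alpha^+\}$ is strictly larger than the true infimum. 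Hence no admissibility-preserving reduction of a minimizing sequence to that class exists. Your fallback, imposing $\|u\|_\infty\le C$ ``since the minimizer is expected to satisfy it'', is circular. It presupposes the minimizer whose existence you are proving, and you would still have to show that the constrained infimum equals the unconstrained one.

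\textbf{The missing idea}, which is the core of the paper's proof, is an a priori lower bound on the state derived from the cost.
\begin{itemize}
\item Since $u=\alpha$ is admissible with cost $\|\alpha\|_{2;[0,\infty)}^2$, you may restrict to controls with $J(u)\le\|\alpha\|_{2;[0,\infty)}^2$.
\item There is $M=M(\lambda,\alpha)$ such that any admissible trajectory reaching $-M$ costs more than this. By \autoref{thm::casesx}, a trajectory bounded below converges to $\pm\sqrt{\lambda}$, so it must climb back from $-M$ to a fixed level such as $-(\lambda+\|\alpha\|_{\infty}+\sqrt{\lambda}+1)$. While $x\le -1$ one has $\dot x\le \lambda+\|\alpha\|_\infty+u+x$, so Gronwall plus Cauchy--Schwarz force $\|u\|_2$ to be large.
\item Hence every element of a minimizing sequence satisfies $x_n\ge -M$ on $[0,\infty)$, with one fixed $M$.
\end{itemize}
Combine this with the locally uniform convergence $x_n\to x^*$ that you already obtained via the $y$-transformation. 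It gives $x^*\ge -M$ on every compact interval, hence global existence and admissibility of the limit. You need no cap, no global convergence of $H$, and no Case C openness.

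The paper then passes to the problem with the fixed state constraint $x\ge -M$ and cites an existence theorem for it. Membership of the minimizer in $\SET(\R^+)$ is delegated to that citation. If you run the direct method yourself, that point still needs a separate argument, since a weak $L^2$ limit need not lie in $L^1\cap L^\infty$.
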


\begin{proof}
    We show that \eqref{prob::OCP} is equivalent to the problem 
    \begin{align} \label{prob::bounded}
    \min_{u\in \SET(\R^+)} \int_0^\infty u^2(t) \dd t \qquad s.t.~\qquad \left\{\begin{array}{lll}
    &\dot{x}(t) = \lambda - \alpha(t) + u(t) - x^2(t) \\
    &x(0)= \mu^x > 0 \\
     &x \geq -M
\end{array}\right.
\end{align}
for some $M>0$. The latter has a solution by \cite[Thm.~1]{existenceofsolutions}. \\ 
First, note that $u_{triv} = \alpha$ is an admissible control with cost $||\alpha||_{2;[0,\infty)}^2$. Moreover, we define the auxiliary function 
$f(s) = e^s/ \sqrt{s}$ which has a minimum at $s=1/2$ with minimal value $C>0$. Using this, we define
\begin{align*}
    M \coloneqq \lambda + ||\alpha||_{\infty;[0,\infty)} + \sqrt{\lambda} + 1+ \frac{(||\alpha||_{2;[0,\infty)} + 1)^\frac{1}{2}}{C}.
\end{align*}
Assume $u$ is an admissible control such that there exists $t_0>0$ with $x(t_0) = -M$. Since $u$ is feasible, it follows by \autoref{thm::casesx} that $x\to\pm\sqrt{\lambda}$. In particular, this means there exists $t>t_0$ such that $x(t) = -(\lambda + ||\alpha||_{\infty;[0,\infty)} + \sqrt{\lambda} +1)$. Further, we have \
\begin{align*}
    \dot{x} \leq \lambda + ||\alpha||_{\infty;[0,\infty)} + u + x \qquad \forall \  x\in[-1,-\infty)
\end{align*}
and therefore by Gronwall's inequality
\begin{align*}
    & x(t) \leq -M e^{(t-t_0)} + \int_{t_0}^t e^{(t-t_0)} (\lambda + ||\alpha||_{\infty;[0,\infty)} + u(s)) \dd s \\
    \Longrightarrow \  &- (\lambda + ||\alpha||_{\infty;[0,\infty)} +\sqrt{\lambda} +1) \leq -M e^{(t-t_0)} + \int_{t_0}^t e^{(t-t_0)} (\lambda + ||\alpha||_{\infty;[0,\infty)} + u(s)) \dd s \\
    \Longrightarrow \ &- (\lambda + ||\alpha||_{\infty;[0,\infty)} + \sqrt{\lambda} +1) \leq -M e^{(t-t_0)} + (e^{(t-t_0)}-1) (\lambda + || \alpha||_{\infty;[0,\infty)}) + \int_{t_0}^t u(s) \dd s.
\end{align*}
It follows by Cauchy-Schwarz that
\begin{align*}
    &- (\sqrt{\lambda} +1) + (M-\lambda - ||\alpha||_{\infty;[0,\infty)}) e^{(t-t_0)} \leq \int_{t_0}^t u(s) \dd s \leq ||u||_{1;[0,\infty)]} \leq \sqrt{t - t_0} ||u||_{2;[0,\infty)} \\
    \Longrightarrow \ &(M-\lambda - ||\alpha||_{\infty;[0,\infty)} - (\sqrt{\lambda} +1)e^{-(t-t_0)}) e^{(t-t_0)} \leq \sqrt{t - t_0} ||u||_{2;[0,\infty)}.
\end{align*}
Therefore
\begin{alignat*}{2}
    & \mathrlap{(M-\lambda - ||\alpha||_{\infty;[0,\infty)} - \sqrt{\lambda} -1) e^{(t-t_0)} \leq \sqrt{t - t_0} ||u||_{2;[0,\infty)}}  \\
    \Longrightarrow \ &||u||_{2;[0,\infty)}
    && \geq (M-\lambda - ||\alpha||_{\infty;[0,\infty)}  - \sqrt{\lambda} -1) \frac{e^{(t-t_0)}}{\sqrt{t - t_0}} 
    =  \frac{(||\alpha||_{2;[0,\infty)} + 1)^\frac{1}{2}}{C} f(t-t_0) \\
    & &&{} \geq (||\alpha||_{2;[0,\infty)} + 1)^\frac{1}{2}.
\end{alignat*}
Hence, 
\begin{align*}
    \int_0^\infty u^2(s) \dd s \geq \int_0^\infty \alpha^2(s) \dd s + 1 > \int_0^\infty u_{triv}^2(s) \dd s.
\end{align*}
It follows that any control $u$ that produces $x$-values smaller than $-M$ has a higher cost than the trivial control $u_{triv}$. In particular, the Optimal Control problem \eqref{prob::OCP} is equivalent to \eqref{prob::bounded}, since $x\geq -M$ necessarily needs to be fulfilled for any optimal control. As \eqref{prob::bounded} has a solution by \cite{existenceofsolutions}, it follows that \eqref{prob::OCP} also has a solution. 
\end{proof}

\subsection{Reformulation}\label{sec::reformsubsec}

In this section, we demonstrate how to reformulate \eqref{prob::OCP} to an unconstrained optimization problem. Since \eqref{prob::OCP} has a solution, so does the new unconstrained problem. \\
We start by showing some dynamical properties that an optimal control of \eqref{prob::OCP} must satisfy.

\begin{proposition}\label{pro::caseAnotOptimal}
    Let $x(0) = \mu^x >0$ and $\alpha \in \SET(\R)$. Furthermore, assume that the uncontrolled system \eqref{eqq::sec3uncontrolled} is in Case C. 

    \begin{itemize}
        \item If $\Tilde{u}$ is such that $x_{\Tilde{u}}$ is in Case A, then $\Tilde{u}$ is not optimal for \eqref{prob::OCP}.
    \end{itemize}

\end{proposition}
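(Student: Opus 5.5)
Suppose $\Tilde{u}$ is admissible and $x_{\Tilde{u}}$ is in Case A. The plan is to scale the control down, $u_s \coloneqq s\Tilde{u}$ for $s\in[0,1]$, and show that some $s<1$ still gives a bounded trajectory. Since $\int_0^\infty u_s^2 = s^2\int_0^\infty \Tilde{u}^2$, this is strictly cheaper as soon as $\Tilde{u}\neq 0$. And $\Tilde{u}\neq 0$ holds automatically: for $\Tilde{u}=0$ the system is the uncontrolled one, which is in Case C.

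The key step is an openness/perturbation argument for Case A with respect to the forcing. By the identification at the start of Section \ref{sec::reformOCproblem}, the trajectory with $x(0)=\mu^x$ is the pullback attractor of an $x$-system on $\R$ whose forcing is $\alpha$ (extended by $\lambda+1$ on $[-T,0)$) minus $u$ on $[0,\infty)$. Case A means this attractor is a hyperbolic attracting solution. The perturbed forcing with $u_s$ differs from the $\Tilde{u}$-forcing by $(1-s)\Tilde{u}$. Its sup-norm is $(1-s)\|\Tilde{u}\|_{\infty}$, which tends to $0$ as $s\uparrow 1$ because $\Tilde{u}\in L^\infty$.

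By \autoref{pro::robustnesshyperbolic}, with $p(x)=\lambda-x^2$ and forcing $-\alpha+u$, the perturbed system has an attractive hyperbolic solution for $s$ close to $1$. It also has a repulsive one, again by \autoref{pro::robustnesshyperbolic}, applied to $\rep^x$. So hyperbolic solutions exist, Case B is excluded (it has none), and Case C is excluded (it has no globally defined solutions). Hence the perturbed system is in Case A by \autoref{thm::casesx}. Alternatively, I could run the argument through the $y$-system and continuity of $\lambda^*$ as in \autoref{thm::casesopen}, since the primitives $\int_0^t (1-s)\Tilde{u}$ converge uniformly to $0$ once $\Tilde{u}\in L^1$.

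The main obstacle is transferring Case A of the perturbed pullback-attractor system back to the trajectory from $\mu^x$. The perturbation is supported on $[0,\infty)$, so the trajectory on $(-\infty,0]$ is unchanged, and uniqueness gives $\attr^x_{u_s}(0)=\mu^x$. Therefore $x_{u_s}$ is the attractor of a Case A system: it is globally bounded and tends to $\sqrt{\lambda}$. In particular $x_{u_s}\geq -M$ for some $M$, so $u_s$ is admissible for \eqref{prob::OCP}. Its cost is strictly smaller than that of $\Tilde{u}$, so $\Tilde{u}$ is not optimal.

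I also need to check that the perturbation lemma indeed yields an attracting solution that coincides with the continuation of our trajectory, rather than merely some hyperbolic solution. This is handled by \autoref{thm::casesx}: in Case A the only hyperbolic attracting solution is $\attr^x$.
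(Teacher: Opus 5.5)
Your proof is correct and follows essentially the same route as the paper: you perturb $\Tilde{u}$ by a small $L^\infty$ amount that strictly lowers the cost, and you use \autoref{pro::robustnesshyperbolic} to stay in Case A. The difference is the choice of perturbation. The paper subtracts $\varepsilon\indicator_{\mathcal{B}}$ on a positive-measure set where $\Tilde{u}>\delta$, which needs $\Tilde{u}>0$ on a set of positive measure (implicitly a comparison argument) and possibly a finite-measure restriction. Your scaling $s\Tilde{u}$ only needs $\Tilde{u}\neq 0$, works for either sign, and stays in $\SET(\R^+)$ automatically; you also make explicit why the perturbed attractor still passes through $\mu^x$ at $t=0$.
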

\begin{proof}
    Since the uncontrolled system is in Case C, this means that there exists a set of positive Lebesgue measure $\mathcal{A}$ such that $\Tilde{u}>0$ on $\mathcal{A}$. Therefore, there exists $\delta>0$ and a set of positive Lebesgue measure $\mathcal{B}$ such that $\Tilde{u}>\delta$ on $\mathcal{B}$, since otherwise
    \begin{align*}
        l(\mathcal{A}) \leq l(\{\Tilde{u} > 0\}) = l\left(\bigcup_{n> 0} \left\{ \Tilde{u} > \frac{1}{n}\right\}  \right)  \leq \sum_{n>0}  l\left( \left\{\Tilde{u} > \frac{1}{n}\right\}\right) = 0.
    \end{align*}
    By \autoref{thm::casesx}, Case A implies that $x_{\Tilde{u}}$ is hyperbolic. By the robustness of hyperbolic solutions under small perturbations (\autoref{pro::robustnesshyperbolic}) there exists $0<\varepsilon<\delta$ such that for 
    \begin{align*}
        \Tilde{u}_\varepsilon = \Tilde{u} - \varepsilon \indicator_{\mathcal{B}}
    \end{align*}
    $x_{\Tilde{u}_\varepsilon}$ is hyperbolic and therefore in Case A. Moreover, $\Tilde{u}_\varepsilon^2 < \Tilde{u}^2$ on $\mathcal{B}$. It follows that
    \begin{align*}
        \int_0^\infty \Tilde{u}_\varepsilon^2(t) \dd t = \int_{\R^+\backslash\mathcal{B}}  \Tilde{u}_\varepsilon^2(t) \dd t + \int_{\mathcal{B}}  \Tilde{u}_\varepsilon^2(t) \dd t < \int_{\R^+\backslash\mathcal{B}}  \Tilde{u}^2(t) \dd t + \int_{\mathcal{B}}  \Tilde{u}^2(t) \dd t = \int_0^\infty \Tilde{u}^2(t) \dd t.
    \end{align*}
    Hence, $\Tilde{u}_\varepsilon$ is a feasible control with a strictly lower cost than $\Tilde{u}$. Therefore, $\Tilde{u}$ cannot be optimal. Note, if necessary, for example, if $\Tilde{u}_\varepsilon$ is not $L^1$, one can restrict to a subset of $\mathcal{B}$ with finite Lebesgue measure.
\end{proof}

\autoref{pro::caseAnotOptimal} has the following consequences: First, an optimally controlled trajectory cannot be in Case C, since it would not be bounded (cf.~\autoref{thm::casesx}). Furthermore, an optimally controlled trajectory cannot be in Case A, since we can otherwise construct an admissible control with lower cost. In particular, any optimally controlled trajectory must be in Case B. As a consequence, we do not change the optimizer by using the function $\Kappa$ to time-rescale every effective forcing $g= \alpha - u$ such that the trajectory is in Case B. However, by the definition of $\Kappa$ (cf.~\autoref{def::Kappa}) any control, $u(t) = \alpha(t) -g(t\Kappa(g))$ obtained by time-rescaling the effective forcing, is admissible. Hence, by the time-rescaling, we restrict to a set that:
\begin{enumerate}[label=(\roman*)]
    \item Is admissible;
    \item Contains the optimizer.
\end{enumerate}
This intuition is made rigorous below.

\begin{problem}[Reformulated Optimal Control Problem]
    Let $x(0) = \mu^x > 0$ and $\alpha\in\SET(\R^+)$ such that the uncontrolled equations \eqref{prob::sec1uncontrolled} satisfies $x \to -\infty$ on $\R^+$. Further, let $\Kappa$ be the rate function from \autoref{def::Kappa}. We consider the unconstrained minimization problem
    \begin{align}\tag{\textbf{RCP}}\label{prob::RCP}
        \min_{g\in\SET(\R^+)} \int_0^\infty [\alpha(t) - g(t \Kappa(\lambda, \alpha, g))]^2 \dd t.
    \end{align}
\end{problem}

\begin{theorem}[Equivalence of Problems]\label{thm::reformulation}
Consider the problems \eqref{prob::OCP} and \eqref{prob::RCP},
then $g^*\in\SET(\R^+)$ is an optimal solution of \eqref{prob::RCP} if and only if
    $u^*(t) = \alpha(t) - g^*(t \Kappa(\lambda,\alpha,g^*))$ 
is an optimal solution of \eqref{prob::OCP}.
\end{theorem}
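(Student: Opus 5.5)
The plan is to compare the optimal values of the two problems through the map $\Phi: g \mapsto u_g(t) \coloneqq \alpha(t) - g(t\Kappa(\lambda,\alpha,g))$. Since the objective of \eqref{prob::RCP} evaluated at $g$ equals the cost of \eqref{prob::OCP} evaluated at $\Phi(g)$, it suffices to show two things. First, $\Phi$ maps $\SET(\R^+)$ into the feasible set of \eqref{prob::OCP}. Second, the image of $\Phi$ contains every optimizer of \eqref{prob::OCP}. With these in hand, the two problems have the same optimal value $J^*$, which is attained because \autoref{pro::ocp_has_solution} guarantees a solution of \eqref{prob::OCP}. The equivalence of optimizers then follows directly.

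\textbf{Feasibility.} For $g\in\SET(\R^+)$, the rescaled function $t\mapsto g(t\Kappa)$ lies again in $\SET(\R^+)$, because $\Kappa>0$. Hence $u_g\in\SET(\R^+)$. By \autoref{def::Kappa} and the remark following it, the trajectory of \eqref{eqq::x} with forcing $h(\cdot;\Kappa(\lambda,\alpha,g))$ is in Case A or B. Using the identification from the beginning of this section, the trajectory starting at $x(0)=\mu^x$ plays the role of $\attr^x$. By \autoref{thm::casesx}, this trajectory is therefore globally defined and bounded, so the constraint $x\ge -M$ holds and $u_g$ is admissible. Consequently the objective of \eqref{prob::RCP} at any $g$ is at least $J^*$.

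\textbf{Surjectivity onto optimizers.} Let $u^*$ be optimal for \eqref{prob::OCP} and set $g_0 = \alpha - u^*$. The trajectory $x_{u^*}$ is bounded, so it is not in Case C. It is not in Case A by \autoref{pro::caseAnotOptimal}, whose hypothesis holds because the uncontrolled system is in Case C. Hence it is in Case B at rate $\kappa=1$. Because the uncontrolled forcing produces Case C, \autoref{cor::transversalTipping} should apply with $\kappa_1$ taken as the rate producing Case C; if necessary this is justified via \autoref{thm::casesopen} and \autoref{thm::transversaltipping}(i). Uniqueness of the Case B rate then gives $\Kappa(\lambda,\alpha,g_0)=1$. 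It follows that $\Phi(g_0)=u^*$, and the objective of \eqref{prob::RCP} at $g_0$ equals $J^*$. So $g_0$ is optimal for \eqref{prob::RCP}, and the two optimal values coincide.

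\textbf{Conclusion in both directions.} If $g^*$ is optimal for \eqref{prob::RCP}, its value is $J^*$. Then $u^*=\Phi(g^*)$ is feasible and has cost $J^*$, so it is optimal for \eqref{prob::OCP}. Conversely, suppose $u^*=\Phi(g^*)$ is optimal for \eqref{prob::OCP}. Then the objective of \eqref{prob::RCP} at $g^*$ is $J^*$, which is the minimum, so $g^*$ is optimal.

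\textbf{Main obstacle.} I expect the delicate step to be showing $\Kappa(\lambda,\alpha,g_0)=1$. This requires checking that \autoref{cor::transversalTipping} genuinely applies to $g_0$, meaning some rate $\kappa_1$ puts the system in Case C, rather than falling into the ``else'' branch of \autoref{def::Kappa}. The first approach I would try is to use Case B at $\kappa=1$ together with the openness in \autoref{thm::casesopen} and monotonicity in \autoref{thm::transversaltipping}. If that fails, I would fall back on \autoref{pro::smallKappa_underCond}, which needs $g_0>\lambda$ on a set of positive measure. I would also have to settle the case where Case B occurs without any Case C rate. There the ``else'' value of $\Kappa$ is $1$ anyway, so $\Kappa(\lambda,\alpha,g_0)=1$ holds in either branch, and the argument goes through.
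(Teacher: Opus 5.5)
Your proposal is correct and uses essentially the same ingredients as the paper's proof: existence of an optimizer from \autoref{pro::ocp_has_solution}, admissibility of $\alpha - g(\cdot\,\Kappa(\lambda,\alpha,g))$ via \autoref{def::Kappa} and \autoref{thm::casesx}, and \autoref{pro::caseAnotOptimal} forcing Case~B at rate $1$ so that $\Kappa(\lambda,\alpha,\alpha-u^*)=1$; you package these as an equality of optimal values rather than the paper's two contradiction arguments. Your closing observation that both branches of \autoref{def::Kappa} give $\Kappa(\lambda,\alpha,g_0)=1$ is exactly what is needed (the paper asserts this without comment), whereas your earlier suggestion that the uncontrolled system being in Case~C supplies a Case~C rate for $g_0$ does not hold as stated, since that Case~C concerns the forcing $\alpha$ rather than the family $g_0(\cdot\,\kappa)$ --- but that step is not needed.
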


\begin{proof}
    ``$\Longrightarrow$": Assume that $g^*$ is an optimal solution of \eqref{prob::RCP} and that $u^*(t) = \alpha(t) - g^*(t\Kappa(\lambda,\alpha,g^*))$ is not optimal for \eqref{prob::OCP}. Then, according to \autoref{pro::ocp_has_solution}, there exists $\Tilde{u}\in\SET(\R^+)$ that is optimal for \eqref{prob::OCP} with
    \begin{align*}
        \int_0^\infty \Tilde{u}^2(t)  \dd t < \int_0^\infty (u^*(t))^2  \dd t = \int_0^\infty [\alpha(t) - g^*(t \Kappa(\lambda, \alpha, g^*)]^2 \dd t.
    \end{align*}
    Define 
    \begin{align*}
        \Tilde{g}(t) \coloneqq \alpha(t) - \Tilde{u}(t) \in \SET(\R^+).
    \end{align*}
    By \autoref{pro::caseAnotOptimal} the controlled equation \eqref{eqq::sec3controlled}$_{\Tilde{u}}$ is in Case B and therefore $\Kappa(\lambda, \alpha, \Tilde{g}) = 1$. It follows that 
    \begin{align*}
        &\int_0^\infty [\alpha(t) - \Tilde{g}(t \Kappa(\lambda, \alpha, \Tilde{g}))]^2  \dd t
        = \int_0^\infty [\alpha(t) - \Tilde{g}(t)]^2  \dd t 
        = \int_0^\infty \Tilde{u}^2(t)  \dd t \\
        &< \int_0^\infty (u^*(t))^2  \dd t
        = \int_0^\infty [\alpha(t) - g^*(t \Kappa(\lambda, \alpha, g^*))]^2  \dd t.
    \end{align*}
    This contradicts the optimality of $g^*$. It remains to show that $u^*$ is admissible for \eqref{prob::OCP}. However, by the definition of $\Kappa$, the equation
    \begin{align*}
        \dot{x}(t) = \lambda - g^*(t\Kappa(\lambda,\alpha,g^*)) - x^2(t) = \lambda - \alpha(t) + u^*(t) - x^2(t)
    \end{align*}
    is in Case A or B, and hence $x_{u^*}$ is bounded by \autoref{thm::casesx}. Further, it is easy to see that $u^* \in \SET(\R^+)$.
    \\
    ``$\Longleftarrow$": Assume that $u^*$ is an optimal solution of \eqref{prob::OCP} and define $g^* \coloneqq \alpha - u^*$. Naturally, $g^*\in \SET(\R^+)$. By \autoref{pro::caseAnotOptimal}, \eqref{eqq::sec3controlled}$_{u^*}$ is in Case B and therefore $\Kappa(\lambda, \alpha, g^*) = 1$. Assume there exists $\Tilde{g} \in \SET(\R^+)$ such that
    \begin{align*}
        \int_0^\infty [\alpha(t) - \Tilde{g}(t \Kappa(\lambda, \alpha, \Tilde{g}))]^2 \dd t < \int_0^\infty [\alpha(t) - g^*(t \Kappa(\lambda, \alpha, g^*))]^2 \dd t 
    \end{align*}
    Let $\Tilde{u}(t) \coloneqq \alpha(t) - \Tilde{g}(t\Kappa(\lambda, \alpha, \Tilde{g}))$. By the definition of $\Kappa$, $\Tilde{u}$ is admissible for \eqref{prob::OCP} and further
    \begin{align*}
        \int_0^\infty \Tilde{u}^2(t) \dd t &= \int_0^\infty [\alpha(t) - \Tilde{g}(t \Kappa(\lambda, \alpha, \Tilde{g}))]^2 \dd t < \int_0^\infty [\alpha(t) - g^*(t \Kappa(\lambda, \alpha, g^*))]^2 \dd t \\
        &= \int_0^\infty [\alpha(t) - g^*(t)]^2 \dd t  = \int_0^\infty (u^*(t))^2 \dd t,
    \end{align*}
    contradicting the optimality of $u^*$.
\end{proof}

\begin{remark}\label{rem::projection}
Note that the choice of the cost function is not important for the reformulation in \autoref{thm::reformulation}. In fact, $\int u^2$ can be replaced by any cost function $\mathcal{L}(u)$ that depends solely on $u$ and for which it can be shown that the equivalent of \eqref{prob::OCP} has solutions (e.g.~by checking the assumptions of \cite[Thm.~1]{existenceofsolutions}). However, naturally the choice of the cost function influences whether the solutions are unique. Further, note that while \eqref{prob::OCP} might have a unique solution, \eqref{prob::RCP} does not. This can be easily seen since if $g(t)$ is an optimal solution, then $g_c(t) \coloneqq g(ct)$ for any $c>0$ satisfies
\begin{align*}
    g(t \Kappa(\lambda, \alpha, g)) = g_c(t \Kappa(\lambda,\alpha,g_c))
\end{align*}
and therefore $g_c$ is also an optimal solution to \eqref{prob::RCP}. 
\end{remark}

\subsection{Rate approximation}\label{sec::rateApproxTheory}

\autoref{thm::reformulation} shows that the Optimal Control problem \eqref{prob::OCP} can equivalently be formulated as a nonlinear unconstrained optimization problem \eqref{prob::RCP}. However, this comes at the cost of determining the rate function $\Kappa$. There are multiple promising ways to do this numerically, among them the power series expansion in \cite{Kuehn2022}, the time-compactification arguments in \cite{Tipping_thresholdsWieczorek_2023}, or even a Taylor expansion. Furthermore, one might also be interested in implementing a cheaper but less precise expression for $\Kappa$ to balance precision and computation efficiency. In any case, it is important to know whether the cost one obtains for any such approximation behaves well in the ``goodness" of the approximation. In the following, we find that the optimal cost function value is continuous in the approximation and in some cases even locally Lipschitz.

\begin{definition}\label{def::feasibleRateFunction}
Similarly to \autoref{def::Kappa}, a function 
$\Tilde{\Kappa} : (0,\infty) \times \SET(\R) \times \SET(\R^+) \to (0,\infty)$ is a \textit{feasible rate function} if, for any set of parameters $\{\lambda, \alpha,g\}$ and $h$ as defined in \eqref{eqq::defh}, the equation
\begin{align*}
    \dot{x}(t) = \lambda - h(t;\Tilde{\Kappa}(\lambda,\alpha,g)) - x^2(t)
\end{align*}
is in Case A or B.

Given a feasible rate function $\Tilde{\Kappa}$, e.g.~as an approximation of $\Kappa$, it is natural to consider the minimization problem
\begin{align}\label{prob::sec3approximatedProblem}
    \min_{g\in\SET(\R^+)} \int_0^\infty [\alpha(t) - g(t\Tilde{\Kappa}(\lambda, \alpha, g))]^2 \dd t
\end{align}
we call \eqref{prob::sec3approximatedProblem} the \textit{approximated OCP} and a solution to \eqref{prob::sec3approximatedProblem} (if existent) an \textit{approximated solution}.
\end{definition}

 Note that $\Kappa$ from \autoref{def::Kappa} is clearly a feasible rate function. 

\begin{remark}
The definition of \eqref{prob::sec3approximatedProblem} opens the question, under what conditions on $\Tilde{\Kappa}$ an approximated OCP has a solution. Research in this regard is postponed to future papers, and for now we simply assume that any feasible rate approximation $\Tilde{\Kappa}$, we consider, is such that the approximated OCP has a solution.    
\end{remark}

\begin{remark}\label{rem::apprx_g_admissable}
Note that by the same steps as in \autoref{thm::reformulation}, one can show that if $\Tilde{g}$ is an optimal solution to \eqref{prob::sec3approximatedProblem} then $\Tilde{u} = \alpha - \Tilde{g}(\cdot \Tilde{\Kappa}(\lambda, \alpha, \Tilde{g})) $ is an admissible solution to \eqref{prob::OCP}, with cost $\int_0^\infty \Tilde{u}(t) \dd t = \int_0^\infty [\alpha(t) - g(t \Tilde{\Kappa}(\lambda, \alpha, g))]^2 \dd t$.
\end{remark}

The following theorem shows that the optimal value is continuous, and under mild conditions even Lipschitz in the rate function $\Kappa$.

\begin{theorem}\label{thm::continuity}
    Let $\alpha\in\SET(\R^+)$ such that the solution to the uncontrolled problem \eqref{eqq::sec3uncontrolled} diverges in finite time and let $g^*$ be an optimal solution to \eqref{prob::RCP}.
    \begin{enumerate}[label=(\roman*)]
        \item Let $(\Kappa_n)_{n\in\N}$ be a sequence of feasible rate functions such that $|\Kappa_n(\lambda, \alpha,g^*) - \Kappa(\lambda, \alpha,g^*)| \to 0$ as $n\to\infty$. The optimal value is \textit{continuous} in the rate function, i.e.~
        \begin{align*}
        \min_{g_n\in\SET(\R^+)} ||\alpha - g_n(\cdot \Kappa_n(\lambda, \alpha, g_n))||_{2;[0,\infty)}^2 \longrightarrow ||\alpha - g^*(\cdot \Kappa(\lambda, \alpha, g^*))||_{2;[0,\infty)}^2.
        \end{align*}

        \item Assume additionally that $g^*\in BV(\R^+)$ with $t(g^*)'\in L^2(\R^+)$. Further, let $0< C \coloneqq  \Kappa(\lambda, \alpha, g^*)$,
        then there exists $L=L(\alpha, g^*, C)>0$ such that
        \begin{align*}
            \bigg|\min_{g\in\SET(\R^+)} ||\alpha - g(\cdot \Tilde{\Kappa}(\lambda, \alpha, g))||_{2;[0,\infty)}^2 -  ||\alpha - g^*(\cdot \Kappa(\lambda, \alpha, g^*))||_{2;[0,\infty)}^2 \bigg| \leq L |\Tilde{\Kappa}(\lambda, \alpha,g^*) - \Kappa(\lambda, \alpha,g^*)|
        \end{align*}
        i.e.~the optimal value obeys a type of \textit{global Lipschitz bound} in the rate function.
    \end{enumerate}
\end{theorem}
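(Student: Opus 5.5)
The argument is a two-sided sandwich around the optimal value $V^* \coloneqq ||\alpha - g^*(\cdot\Kappa(\lambda,\alpha,g^*))||_{2;[0,\infty)}^2$ of \eqref{prob::RCP}. For any feasible rate function $\Tilde{\Kappa}$, \autoref{rem::apprx_g_admissable} shows that an approximated solution $\Tilde{g}$ yields an admissible control for \eqref{prob::OCP} with the same cost. By \autoref{thm::reformulation}, $V^*$ is the optimal value of \eqref{prob::OCP}, so
\[
\min_{g} ||\alpha - g(\cdot\Tilde{\Kappa}(\lambda,\alpha,g))||_{2;[0,\infty)}^2 \geq V^*.
\]
It therefore suffices to produce a good competitor for the upper bound. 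I would test the approximated problem with $g = g^*$. This gives
\[
0 \leq \min_g(\dots) - V^* \leq ||\alpha - g^*(\cdot\Tilde{\Kappa}^*)||_{2}^2 - ||\alpha - g^*(\cdot C)||_{2}^2,
\]
where $\Tilde{\Kappa}^* \coloneqq \Tilde{\Kappa}(\lambda,\alpha,g^*)$ and $C = \Kappa(\lambda,\alpha,g^*)$. Only the value of the rate function at $g^*$ then enters, which matches the form of the statement.

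For (i), apply this with $\Tilde{\Kappa} = \Kappa_n$ and $c_n \coloneqq \Kappa_n(\lambda,\alpha,g^*) \to C$. It remains to show that $c \mapsto g^*(\cdot c)$ is continuous into $L^2(\R^+)$ at $c = C > 0$. The rescaling $f \mapsto f(\cdot c)$ is an isometry up to the factor $c^{-1/2}$, and it is uniformly bounded for $c$ near $C$. Approximating $g^* \in L^2$ by $C_c(\R^+)$ functions, for which continuity follows from uniform continuity and compact support, and closing with an $\varepsilon/3$ argument gives continuity of $c \mapsto g^*(\cdot c)$ in $L^2$. Expanding the squared norm, the difference of costs is bounded by
\[
||g^*(\cdot c_n) - g^*(\cdot C)||_2 \bigl(2||\alpha||_2 + ||g^*(\cdot c_n)||_2 + ||g^*(\cdot C)||_2\bigr) \to 0.
\]

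For (ii), the same factorization gives
\[
|\text{difference}| \leq \bigl(2||\alpha||_2 + ||g^*(\cdot\Tilde{\Kappa}^*)||_2 + ||g^*(\cdot C)||_2\bigr)\, ||g^*(\cdot\Tilde{\Kappa}^*) - g^*(\cdot C)||_2 .
\]
The central estimate I would aim for is
\[
||g^*(\cdot c) - g^*(\cdot C)||_2 \leq |c - C| \int_0^1 \Bigl|\Bigl| t\,(g^*)'\bigl(t(C + s(c - C))\bigr)\Bigr|\Bigr|_2 \dd s .
\]
It comes from writing $g^*(tc) - g^*(tC) = \int_0^1 t\,(g^*)'(t(C + s(c - C)))(c - C)\dd s$ and applying Minkowski's integral inequality. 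This step is legitimate because $g^* \in BV$: the derivative is a measure, and the hypothesis $t(g^*)' \in L^2$ is read as requiring it to be absolutely continuous with the stated integrability. Substituting $\tau = t\,\kappa_s$ with $\kappa_s = C + s(c - C)$ yields $||t(g^*)'(t\kappa_s)||_2 = \kappa_s^{-3/2}||\tau(g^*)'||_2$.

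The main obstacle is that this constant blows up as $\kappa_s \to 0$, and the norms $||g^*(\cdot c)||_2 = c^{-1/2}||g^*||_2$ blow up as $c \to 0$, so a truly global $L$ is not available from this computation. I would handle it by splitting into two cases. If $|c - C| \leq C/2$, the constants are uniform, with $L$ depending on $C$, $||\alpha||_2$, $||g^*||_2$ and $||t(g^*)'||_2$. If $|c - C| > C/2$, then $c < C/2$ is impossible for a feasible rate, since by \autoref{cor::transversalTipping} rates below $\Kappa$ give Case C. When $\Kappa$ is the true critical rate this forces $c \geq C$, and then $c > 3C/2$. In that regime both costs are bounded by constants depending only on $C$, so the difference is at most $\text{const}\cdot|c - C|/(C/2)$, which can be absorbed into $L$. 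This is the step I expect to need care, in particular the fact that feasibility forces $\Tilde{\Kappa}^* \geq C$, which holds whenever \autoref{cor::transversalTipping} applies to $g^*$.
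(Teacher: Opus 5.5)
Your proposal is correct and is essentially the paper's proof: the lower bound comes from admissibility of every feasibly rescaled control together with \autoref{thm::reformulation}, the upper bound from testing the approximated problem with $g^*$, and the rest is continuity (via density) resp.\ Lipschitz continuity (via Minkowski's inequality and the substitution $\tau=t\kappa_s$) of $\kappa\mapsto\|\alpha-g^*(\cdot\kappa)\|_{2;[0,\infty)}^2$, which is \autoref{lem::propertieslossfunc}. The only difference is in (ii): the paper simply asserts $\Tilde{\Kappa}(\lambda,\alpha,g^*)\geq C$ and applies the lemma on $I=[C,\infty)$, while you justify the inequality via \autoref{cor::transversalTipping}; once it holds, every $\kappa_s$ lies in $[C,\infty)$ and the constants are already uniform, so your case split is unnecessary.
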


For the proof of \autoref{thm::continuity}, we need the following Lemma, the proof of which has been moved to the appendix.

\begin{lemma}\label{lem::propertieslossfunc}
     The following holds:
    Fix $\alpha \in \SET(\R), \ g \in \SET(\R^+)$, then
    \begin{enumerate}[label=(\roman*)]
        \item The map $\R^+ \ni \kappa \mapsto ||\alpha - g(\cdot \kappa)||_{2;[0,\infty)}^2$ is continuous;
        \item Let $I \subset [\kappa_-,\kappa_+)$ for $0<\kappa_-<\kappa_+\leq \infty$. If additionally $g \in BV(\R^+)$ with $tg'\in L^2(\R^+)$, then there exists $L=L(\alpha,g,I)$ such that
        \begin{align*}
            \bigg| ||\alpha - g(\cdot \kappa_1)||_{2;[0,\infty)}^2 - ||\alpha - g(\cdot \kappa_2)||_{2;[0,\infty)}^2 \bigg| \leq L |\kappa_1 - \kappa_2| \qquad \forall \  \kappa_1,\kappa_2 \in I.
        \end{align*}
    \end{enumerate} 
\end{lemma}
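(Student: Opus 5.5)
The plan is to treat the map $\kappa\mapsto g(\cdot\kappa)$ as a curve in $L^2(\R^+)$ and show it is continuous, and Lipschitz under the extra assumptions. Both statements then follow from the elementary estimate
\begin{align*}
\Big| \|a\|_{2;[0,\infty)}^2 - \|b\|_{2;[0,\infty)}^2 \Big| \leq \|a-b\|_{2;[0,\infty)} \big(\|a\|_{2;[0,\infty)} + \|b\|_{2;[0,\infty)}\big),
\end{align*}
applied with $a = \alpha - g(\cdot\kappa_1)$ and $b = \alpha - g(\cdot\kappa_2)$. Then $a-b = g(\cdot\kappa_2)-g(\cdot\kappa_1)$. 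The change of variables $s = t\kappa$ gives $\|g(\cdot\kappa)\|_{2;[0,\infty)} = \kappa^{-1/2}\|g\|_{2;[0,\infty)}$. Since $g\in\SET(\R^+)\subset L^2(\R^+)$, the factor $\|a\|+\|b\|$ is bounded by $2\|\alpha\|_{2;[0,\infty)} + 2\kappa_-^{-1/2}\|g\|_{2;[0,\infty)}$ whenever $\kappa_1,\kappa_2\geq\kappa_->0$. This bound also holds when $\kappa_+=\infty$. It therefore suffices to control $\|g(\cdot\kappa_2)-g(\cdot\kappa_1)\|_{2;[0,\infty)}$.

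\textbf{Continuity (i).} Fix $\kappa>0$ and $\varepsilon>0$. By density, choose $\psi\in C_c(\R^+)$ with $\|g-\psi\|_{2;[0,\infty)}<\varepsilon$. The dilation $D_\kappa f = f(\cdot\kappa)$ has operator norm $\kappa^{-1/2}$ on $L^2$, which is uniformly bounded for $\kappa$ in a compact subinterval of $(0,\infty)$. This gives
\begin{align*}
\|D_{\kappa_n}g - D_\kappa g\|_{2;[0,\infty)} \leq \big(\kappa_n^{-1/2}+\kappa^{-1/2}\big)\varepsilon + \|D_{\kappa_n}\psi - D_\kappa\psi\|_{2;[0,\infty)}.
\end{align*}
For $\kappa_n\to\kappa$, the last term tends to zero. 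Indeed, $\psi$ is uniformly continuous, and all the supports $\operatorname{supp}(D_{\kappa_n}\psi)$ lie in one fixed compact set, so dominated convergence applies. This proves that $\kappa\mapsto g(\cdot\kappa)$ is continuous into $L^2$, and (i) follows from the estimate above.

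\textbf{Lipschitz bound (ii).} For fixed $t$, I would write
\begin{align*}
g(t\kappa_2)-g(t\kappa_1) = \int_{\kappa_1}^{\kappa_2} t\, g'(ts)\,\dd s .
\end{align*}
Minkowski's integral inequality then gives
\begin{align*}
\|g(\cdot\kappa_2)-g(\cdot\kappa_1)\|_{2;[0,\infty)} \leq \Big|\int_{\kappa_1}^{\kappa_2} \|t\,g'(ts)\|_{2,t}\,\dd s\Big| .
\end{align*}
Substituting $\tau = ts$, so that $t\,g'(ts) = s^{-1}\,\tau g'(\tau)$ and $\dd t = s^{-1}\dd\tau$, yields $\|t\,g'(ts)\|_{2,t} = s^{-3/2}\|\tau g'(\tau)\|_{2;[0,\infty)}$. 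Hence
\begin{align*}
\|g(\cdot\kappa_2)-g(\cdot\kappa_1)\|_{2;[0,\infty)} \leq \kappa_-^{-3/2}\,\|tg'\|_{2;[0,\infty)}\,|\kappa_2-\kappa_1| .
\end{align*}
Combining this with the bound on $\|a\|+\|b\|$ gives the Lipschitz constant
\begin{align*}
L = 2\kappa_-^{-3/2}\|tg'\|_{2;[0,\infty)}\big(\|\alpha\|_{2;[0,\infty)}+\kappa_-^{-1/2}\|g\|_{2;[0,\infty)}\big),
\end{align*}
which depends only on $\alpha$, $g$ and $I$.

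\textbf{Main obstacle.} The step I expect to be hardest is justifying the fundamental-theorem identity when $g$ is only $BV$, so that $g'$ is a priori a measure. I would read the hypothesis $tg'\in L^2(\R^+)$ as saying that the distributional derivative is a function on $(0,\infty)$. Together with $g\in BV$, this makes $g$ absolutely continuous on every $[\delta,\infty)$ with $\delta>0$. The identity then holds for every $t>0$, and the single point $t=0$ has measure zero. If that reading is too strong, I would instead mollify $g$, prove the bound for the smooth approximants with a constant that is uniform in the mollification parameter, and pass to the limit using the continuity already established in (i).
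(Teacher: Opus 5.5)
Your proposal is correct and follows the paper's proof essentially step for step: for (ii) the paper likewise writes $g(t\kappa_2)-g(t\kappa_1)=\int_{\kappa_1}^{\kappa_2} t\,g'(t\kappa)\,d\kappa$, applies Minkowski's integral inequality with the scaling $\kappa^{-3/2}\|tg'\|_{2;[0,\infty)}$ and the bound $\big|\|u\|^2-\|v\|^2\big|\le\|u+v\|\,\|u-v\|$, and obtains the same constant $L$; for (i) it uses the same density argument, except that it takes $C_c^\infty$ approximants and controls them via (ii), where you take $C_c$ approximants and use dominated convergence. The paper uses the fundamental-theorem identity without justification, so your reading of $tg'\in L^2(\R^+)$ as a condition on the distributional derivative supplies a step that the argument genuinely needs: for $\alpha=0$ and $g$ the indicator of $[0,1]$, the a.e.\ derivative vanishes and the formula would give $L=0$, yet $\|\alpha-g(\cdot\kappa)\|_{2;[0,\infty)}^2=1/\kappa$ is not constant, and for the same reason your mollification fallback would not help, since $\|t g_\varepsilon'\|_{2;[0,\infty)}$ blows up for such $g$.
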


\begin{proof}[Proof of \autoref{thm::continuity}]
    (i) Let $(g_n^*)_{n\in\N}$ be solutions to 
    \begin{align}\label{eqq::proof::gn}    
    \min_{g_n\in\SET(\R^+)} ||\alpha - g_n(\cdot \Kappa_n(\lambda, \alpha, g_n))||_{2;[0,\infty)}^2.
    \end{align}
    Note that 
    \begin{align}\label{eqq::proof::origKappaBetter}
        ||\alpha - g_n^*(\cdot \Kappa_n(\lambda,\alpha,g_n^*))||_{2;[0,\infty)}^2 \geq ||\alpha - g^*(\cdot\Kappa(\lambda, \alpha, g^*))||_{2;[0,\infty)}^2
    \end{align}
    for all $n\in\N$, since otherwise, by the same steps as in the proof of \autoref{thm::reformulation}, $u^*_n(t) \coloneqq \alpha(t) - g_n^*(t \Kappa_n(\lambda, \alpha,g_n^*))$ would be an admissible control for \eqref{prob::OCP} with strictly lower cost than $u^*(t) \coloneqq \alpha(t) - g^*(t \Kappa(\lambda, \alpha,g^*))$. This contradicts the statement of \autoref{thm::reformulation}.
    \\\\
    Fix $\varepsilon>0$. By \autoref{lem::propertieslossfunc} (i), there exists $\delta>0$ such that for all $\kappa,\kappa_n>0$ with $|\kappa - \kappa_n| < \delta$
    \begin{align}\label{eqq::proof::continuityInKappa}
        \bigg|||\alpha - g^*(\cdot\kappa)||_{2;[0,\infty)}^2 - ||\alpha -g^*(\cdot\kappa_n)||_{2;[0,\infty)}^2\bigg| < \varepsilon.
    \end{align}
    Further, by the assumptions on $(\Kappa_n)_{n\in\N}$ there exists $n_0\in\N$ such that for all $n\geq n_0$, \\
    $|\Kappa(\lambda, \alpha,g^*) - \Kappa_n(\lambda, \alpha,g^*)| < \delta$. Hence, for all $n\geq n_0$: 
    \begin{align*}
        ||\alpha - g_n^*(\cdot\Kappa_n(\lambda,\alpha,g_n^*))||_{2;[0,\infty)}^2& \geq ||\alpha - g^*(\cdot\Kappa(\lambda,\alpha,g^*))||_{2;[0,\infty)}^2 \geq ||\alpha - g^*(\cdot\Kappa_n(\lambda,\alpha,g^*))||_{2;[0,\infty)}^2 - \varepsilon \\
        &\geq ||\alpha - g_n^*(\cdot\Kappa_n(\lambda,\alpha,g_n^*))||_{2;[0,\infty)}^2 - \varepsilon, 
    \end{align*}
    where the first inequality comes from \eqref{eqq::proof::origKappaBetter}, the second from the continuity \eqref{eqq::proof::continuityInKappa} and the third inequality follows from the optimality of $g_n^*$ for the problem \eqref{eqq::proof::gn}. Therefore,
    \begin{align*}
        \varepsilon & \geq  ||\alpha - g_n^*(\cdot\Kappa_n(\lambda,\alpha,g_n^*))||_{2;[0,\infty)}^2 - ||\alpha - g^*(\cdot\Kappa(\lambda,\alpha,g^*))||_{2;[0,\infty)}^2 \\
        &= \bigg| ||\alpha - g_n^*(\cdot\Kappa_n(\lambda,\alpha,g_n^*))||_{2;[0,\infty)}^2 - ||\alpha - g^*(\cdot\Kappa(\lambda,\alpha,g^*))||_{2;[0,\infty)}^2 \bigg|
    \end{align*}
    for all $n\geq n_0$, which proofs the claim.
    \\\\
    (ii) Let additionally $g^*\in BV$, $t(g^*)' \in L^2$ and $C\coloneqq \Kappa(\lambda,\alpha,g^*)$. Note that $\Kappa(\lambda,\alpha,g^*) \leq \Tilde{\Kappa}(\lambda,\alpha,g^*)$ and therefore 
    \autoref{lem::propertieslossfunc} (ii) can be applied with $I = [C,\infty)$. Hence, there exists $L=L(\alpha,g^*,C)$ such that 
    \begin{align*}
        \bigg| ||\alpha - g^*(\cdot \Tilde{\Kappa}(\lambda, \alpha, g^*)) ||_{2;[0,\infty)}^2 - ||\alpha - g^*(\cdot \Kappa(\lambda, \alpha, g^*)) ||_{2;[0,\infty)}^2 \bigg| 
        &\leq L \big| \Tilde{\Kappa}(\lambda, \alpha, g^*) - \Kappa(\lambda, \alpha, g^*) \big|.
    \end{align*}
    Let $\Tilde{g}^*$ be a solution to $ 
    \min_{g\in\SET(\R^+)} ||\alpha - g(\cdot \Tilde{\Kappa}(\lambda, \alpha, g))||_{2;[0,\infty)}^2
    $.
    By the same steps as in the proof of (i), we get
    \begin{align*}
        ||\alpha - \Tilde{g}^*(\cdot\Tilde{\Kappa}(\lambda,\alpha,\Tilde{g}^*))||_{2;[0,\infty)}^2& \geq ||\alpha - g^*(\cdot\Kappa(\lambda,\alpha,g^*))||_{2;[0,\infty)}^2 \\
        &\geq ||\alpha - g^*(\cdot\Tilde{\Kappa}(\lambda,\alpha,g^*))||_{2;[0,\infty)}^2 - L |\Tilde{\Kappa}(\lambda, \alpha,g^*) - \Kappa(\lambda, \alpha,g^*)| \\
        &\geq ||\alpha - \Tilde{g}^*(\cdot\Tilde{\Kappa}(\lambda,\alpha,\Tilde{g}^*))||_{2;[0,\infty)}^2 - L |\Tilde{\Kappa}(\lambda, \alpha,g^*) - \Kappa(\lambda, \alpha,g^*)|,
    \end{align*}
    and therefore 
    \begin{align*}
        L |\Tilde{\Kappa}(\lambda, \alpha,g^*) - \Kappa(\lambda, \alpha,g^*)|
        &\geq ||\alpha - \Tilde{g}^*(\cdot\Tilde{\Kappa}(\lambda,\alpha,\Tilde{g}^*))||_{2;[0,\infty)}^2 - ||\alpha - g^*(\cdot\Kappa(\lambda,\alpha,g^*))||_{2;[0,\infty)}^2 \\
        &= \bigg| ||\alpha - \Tilde{g}^*(\cdot\Tilde{\Kappa}(\lambda,\alpha,\Tilde{g}^*))||_{2;[0,\infty)}^2 - ||\alpha - g^*(\cdot\Kappa(\lambda,\alpha,g^*))||_{2;[0,\infty)}^2\bigg|.
    \end{align*}
\end{proof}

\begin{remark}    
Note, we do not claim $g_n^* \to g^*$ in any norm. Further, \autoref{thm::continuity} shows that the rate approximation $\Tilde{K}$ only needs to be good at the optimal solution $g^*$. This can be used in numerical implementations. In particular, if one has prior information about $g^*$ one can use this to refine a rate approximation around this prior.
\end{remark}

\clearpage

\section{Closed form solution through approximated rate function}\label{sec::analyticalsol}

In the following, we present how a feasible rate function, i.e.~an approximation of $\Kappa$ can lead to a closed form expression of the solution to the approximated OCP \eqref{prob::sec3approximatedProblem}. It is important to note that by the definition of a feasible rate function \autoref{def::feasibleRateFunction}, the closed form expression of the approximated control avoids tipping.
\\
This section is split into two parts. In the first, we present one such approximation, which then leads to a closed form expression of the solution to the approximated OCP \eqref{prob::sec3approximatedProblem}. Following the discussion in Section \ref{sec::rateApproxTheory} this leads to a closed form expression of an admissible control for \eqref{prob::OCP}. Naturally, this approximated control has a higher cost than the optimal control. Note, one can show that the control one obtains in this way is guaranteed to perform strictly better than the trivial control $u_{triv} = \alpha$. 

A closed form expression for an admissible control is of major importance in situations where controls have to be calculated frequently (e.g.~for uncertainty quantification) or in scenarios when computation time is of utmost importance.
In the second part of this section, we compare the obtained approximated control $\Tilde{u}(t) = \alpha(t) - \Tilde{g}(t\Tilde{K}(\Tilde{g}))$ for \eqref{prob::OCP} with the numerical solution obtained by passing the problem as a constrained nonlinear optimization problem to IPOPT (\cite{ipopt}). Comparisons are made in execution time, objective function values, and shape of the controls and the resulting trajectories.

\subsection{Derivation of closed form control for the approximated rate}

Let $\lambda>0$, $\alpha \in \SET(\R)$ such that the solution of 
\begin{align*}
    \dot{x}(t) = \lambda - \alpha(t) - x(t)^2 \qquad x(0) = \mu^x \in (-\sqrt{\lambda}, \sqrt{\lambda})
\end{align*}
diverges to negative infinity on $(0,\infty)$.

Note, since this section does not use the uniqueness of the critical rate (\autoref{thm::transversaltipping}) one can loosen the assumption $x(0) > 0$ of the foregoing sections. However, we require two additional assumptions in this section, namely that $\alpha,g \geq 0$. The former means that the given external forcing is solely destabilizing the system and never stabilizing, while the latter means that the control is not allowed to stabilize the system beyond undoing the external forcing completely (recall $g(t\kappa) = \alpha(t) - u(t)$). For simplicity, we additionally assume $x(0) \in (-\sqrt{\lambda}, \sqrt{\lambda})$ to have a more concise formulation of the feasible rate function.
\\\\
By \autoref{thm::largekappa} (ii) the function 
\begin{align*}
\Tilde{\Kappa} : (0,\infty) \times \SET(\R) \times \SET(\R^+), \ \Tilde{\Kappa} (\lambda, \alpha, g) \coloneqq \frac{||g||_{1;[0,\infty)}}{\sqrt{\lambda} + x(0)} = \frac{||g||_{1;[0,\infty)}}{m_{\lambda,\alpha}},
\end{align*}
where $m_{\lambda,\alpha} \coloneqq \lambda + x(0)$, is a feasible rate approximation. In the following, we use this rate and consider the approximated control problem 
\begin{align}\label{prob::sec4approx}
    \min_{g\in\SET(\R^+), g\geq 0} \int_0^\infty \left[\alpha(t) - g\left(t \frac{||g||_{1;[0,\infty)}}{m_{\lambda,\alpha}}\right)\right]^2 \dd t .
\end{align}

\begin{theorem}\label{thm::approximatedcontrol}
    Let $(f)^+ $ denote the positive part of $f$ and choose $\Tilde{\nu} \geq 0$ such that \\
    $\int_0^\infty  \big(\alpha(sm_{\lambda, \alpha}) - \frac{\Tilde{\nu}}{2}\big)^+ \dd s = 1$. Then:
    \begin{enumerate}[label=(\roman*)]
        \item The function $\Tilde{g}(s) = \big(\alpha(sm_{\lambda, \alpha}) - \frac{\Tilde{\nu}}{2}\big)^+$
            is a solution to \eqref{prob::sec4approx};
        \item The control
                $\Tilde{u}(s) = \alpha(s) - \big(\alpha(s) - \frac{\Tilde{\nu}}{2} \big)^+ $
            is an admissible control for the original problem \eqref{prob::OCP}, corresponding to $\Tilde{g}$ through $\Tilde{u}(s) = \alpha(s) - \Tilde{g}(s\Tilde{\Kappa}(\lambda, \alpha, \Tilde{g}))$.
    \end{enumerate}

\end{theorem}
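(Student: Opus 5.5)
The plan is to exploit the scaling invariance from \autoref{rem::projection}, which collapses \eqref{prob::sec4approx} to a convex projection problem. Given $g\geq 0$, $g\in\SET(\R^+)$, $g\neq 0$, write $c = \|g\|_{1;[0,\infty)}/m_{\lambda,\alpha}$ and define $G(t) \coloneqq g(tc)$. Then $G\geq 0$ and
\begin{align*}
\|G\|_{1;[0,\infty)} = \frac{1}{c}\|g\|_{1;[0,\infty)} = m_{\lambda,\alpha}.
\end{align*}
Conversely, any $G\geq 0$ with $\|G\|_1 = m_{\lambda,\alpha}$ arises in this way. For instance, take $g(s) = G(s/m_{\lambda,\alpha})$; then $\|g\|_1 = m_{\lambda,\alpha}^2$, so $c = m_{\lambda,\alpha}$ and $g(tc) = G(t)$. (For $g = 0$ the rate is not positive; I will treat it separately, or exclude it, noting that the problem's $g=0$ corresponds to the trivial control $u=\alpha$.)

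Consequently \eqref{prob::sec4approx} is equivalent to
\begin{align*}
\min\Big\{ \|\alpha - G\|_{2;[0,\infty)}^2 \;:\; G\geq 0,\ \textstyle\int_0^\infty G = m_{\lambda,\alpha}\Big\}.
\end{align*}
This is the projection of $\alpha$ onto a closed convex subset of $L^2$. I will apply the KKT / Lagrangian argument pointwise. Minimize $\int (\alpha - G)^2 + \nu(\int G - m)$ over $G\geq 0$; the pointwise minimizer is $G = (\alpha - \nu/2)^+$, with $\nu$ fixed by the mass constraint.

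For rigor, I will avoid duality and instead verify optimality directly via the variational inequality. For any feasible $G$ and $G^* = (\alpha-\nu/2)^+$, I need
\begin{align*}
\int (G^*-\alpha)(G-G^*) \geq 0 .
\end{align*}
Since $\int (G - G^*) = 0$, we may replace $G^*-\alpha$ by $G^*-\alpha+\nu/2$. On $\{G^*>0\}$ this quantity is zero. On $\{G^*=0\}$ it equals $\nu/2-\alpha \geq 0$ and multiplies $G - 0 \geq 0$. So the integral is nonnegative, and
\begin{align*}
\|\alpha-G\|^2 \geq \|\alpha-G^*\|^2 + \|G-G^*\|^2 .
\end{align*}

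It remains to match the normalization in the statement. Substituting $s = t/m_{\lambda,\alpha}$, the condition $\int_0^\infty(\alpha(t)-\Tilde\nu/2)^+ \dd t = m_{\lambda,\alpha}$ becomes $\int_0^\infty (\alpha(sm_{\lambda,\alpha})-\Tilde\nu/2)^+\dd s = 1$. Hence $\Tilde g(s) = G^*(s m_{\lambda,\alpha})$, which is exactly the preimage construction above, and (i) follows.

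I also need existence of $\Tilde\nu$. The map $\nu\mapsto \int(\alpha-\nu/2)^+$ is continuous and nonincreasing, tends to $0$ as $\nu\to\infty$ (by dominated convergence, using $\alpha\in L^1$), and equals $\|\alpha\|_1$ at $\nu=0$. So the intermediate value theorem gives $\Tilde\nu$ provided $\|\alpha\|_{1;[0,\infty)} \geq m_{\lambda,\alpha}$. I expect this inequality to be the main obstacle. The intended argument is that uncontrolled divergence forces $\|\alpha\|_1 > \sqrt\lambda + x(0)$, via the moving-equilibria argument of \autoref{thm::largekappa} (ii) applied with $g = \alpha$, $\kappa = 1$. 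If instead $\|\alpha\|_1 < m_{\lambda,\alpha}$, then $\kappa=1$ would already be feasible, so the uncontrolled system would not tip. By contraposition the normalization is attainable. (Note also that the statement's $m_{\lambda,\alpha}$ should read $\sqrt\lambda + x(0)$.)

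For (ii), I will compute $\Tilde\Kappa(\lambda,\alpha,\Tilde g) = \|\Tilde g\|_1/m_{\lambda,\alpha} = m_{\lambda,\alpha}$. Hence $\Tilde g(s\Tilde\Kappa) = (\alpha(s)-\Tilde\nu/2)^+$, which gives the stated formula for $\Tilde u$. Admissibility then follows from \autoref{rem::apprx_g_admissable} together with the feasibility of $\Tilde\Kappa$ from \autoref{thm::largekappa} (ii), which applies because $\Tilde g\geq0$ and $x(0)\in(-\sqrt\lambda,\sqrt\lambda)$. Finally, I will check that $\Tilde u\in\SET(\R^+)$: we have $0\leq\Tilde u\leq\alpha$, so $\Tilde u$ inherits integrability and boundedness from $\alpha$.
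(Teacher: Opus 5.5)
Your proof is correct in substance. It uses the same time-rescaling reduction as the paper but certifies optimality differently. The paper reduces \eqref{prob::sec4approx} to minimizing $\int_0^\infty[\alpha(s m_{\lambda,\alpha})-g(s)]^2\,ds$ over $g\ge 0$ with $\|g\|_{1;[0,\infty)}=1$. It then invokes an abstract infinite-dimensional Lagrange multiplier sufficiency theorem, with multipliers $\tilde\nu$ and $\tilde\mu=2(\tilde\nu/2-\alpha(\cdot\, m_{\lambda,\alpha}))^+$. Your normalization $\int G=m_{\lambda,\alpha}$ in unscaled time is the same problem. Your variational-inequality check is exactly that KKT certificate verified by hand: the mass constraint lets you absorb $\tilde\nu/2$, and the sign of $\tilde\nu/2-\alpha$ on $\{G^*=0\}$ plays the role of $\tilde\mu\ge 0$. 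What your route buys is a self-contained argument with no external theorem. The Pythagorean inequality also shows that the optimal $G$, and hence the resulting control $\tilde u$, is unique, even though $\tilde g$ itself is only determined up to rescaling. You should drop the phrase ``closed convex subset of $L^2$'': the feasible set is not $L^2$-closed on $[0,\infty)$ (e.g.\ $(m_{\lambda,\alpha}/n)\indicator_{[0,n]}\to 0$), but your direct verification never uses closedness. Your existence argument for $\tilde\nu$ is the same as in \autoref{lem::integral1nu}.

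There is one computational error in (ii) that you should fix. By the choice of $\tilde\nu$, $\|\tilde g\|_{1;[0,\infty)}=\int_0^\infty(\alpha(s m_{\lambda,\alpha})-\tilde\nu/2)^+\,ds=1$. Hence $\Tilde{\Kappa}(\lambda,\alpha,\tilde g)=1/m_{\lambda,\alpha}$, not $m_{\lambda,\alpha}$. With your value one gets $\tilde g(s m_{\lambda,\alpha})=(\alpha(s m_{\lambda,\alpha}^2)-\tilde\nu/2)^+$, which is not the stated formula. The confusion comes from calling $\tilde g(s)=G^*(s m_{\lambda,\alpha})$ ``exactly'' your earlier preimage $G^*(s/m_{\lambda,\alpha})$, whose $L^1$ norm is $m_{\lambda,\alpha}^2$; it is a different rescaling. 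Every $G(\beta\,\cdot)$ with $\beta>0$ is mapped back to $G$ by $g\mapsto g(\cdot\,\|g\|_{1;[0,\infty)}/m_{\lambda,\alpha})$, so part (i) is unaffected. With the correct rate, $\tilde g(s/m_{\lambda,\alpha})=(\alpha(s)-\tilde\nu/2)^+$ gives the formula for $\tilde u$. Finally, $\Tilde{\Kappa}(\lambda,\alpha,\tilde g)$ coincides exactly with the threshold $\kappa_2$ of \autoref{thm::largekappa}(ii). Admissibility therefore rests on part (iii) of that theorem (Case A or B at $\kappa_2$), not on (ii) alone; the paper glosses over this point as well.
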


\vspace{1cm}

\begin{proof}[Proof of \autoref{thm::approximatedcontrol}]
    Consider the problem \eqref{prob::sec4approx}. Upon the time-rescaling $s = t/ m_{\lambda,\alpha}$, one obtains
    \begin{align*}
        \min_{g\in\SET, g\geq 0}m_{\lambda,\alpha} \int_0^\infty \left[\alpha(sm_{\lambda,\alpha}) - g\left(s ||g||_{1;[0,\infty)}\right)\right]^2 \dd s.
    \end{align*}
    Naturally, the prefactor $m_{\lambda,\alpha}$ does not influence the optimizer. Furthermore, note that $g\geq0$ implies $||g(\cdot ||g||_{1;[0,\infty)})||_{1;[0,\infty)} =1$. Therefore, \eqref{prob::sec4approx} can be equivalently written as 
    \begin{align}\label{prob::sec4approx_proj}
        \min_{g\in\SET(\R^+), g\geq 0, ||g||_{1,[0,\infty)} = 1} \int_0^\infty \left[\alpha(s m_{\lambda,\alpha}) - g(s) \right]^2 \dd t .
    \end{align}

    To find the minimizer of \eqref{prob::sec4approx_proj} we use infinite dimensional Lagrange multipliers and in particular the theory from \cite{infiniteDimensionalLagrangeMultiplier}.
    For that, we consider $\SET(\R^+)$ as a convex subspace of $L^2(\R^+)$, the convex cone $C \coloneqq \{g\in L^2(\R^+) : g\geq 0\} \subset L^2(\R^+)$ and its dual
    \begin{align*}
        C^* \coloneqq \{p \in L^2 : \langle p, g \rangle \geq 0 \ \forall \ g\in C \}    
    \end{align*}
    Note that $C^* = C$, and define
    \begin{align*}
    & f: \SET(\R^+) \to \R, \ f(g) \coloneqq \int_0^\infty [\alpha(sm_{\lambda,\alpha}) - g( s)]^2 \dd s && \text{is convex}, \\
    & f_1: \SET(\R^+) \to L^2, \ f_1(g) \coloneqq -g && \text{is convex, and} \\
    & f_2: \SET(\R^+) \to \R, \ f_2(g) \coloneqq \int_0^\infty g(s) \dd s - 1&& \text{affine linear}.
\end{align*}
Thus \eqref{prob::sec4approx_proj} becomes
\begin{align*}
    \min_g f(g) \qquad s.t.~\left\{\begin{array}{ccc}
        g \in \SET(\R^+) &  \\
        f_1(g) \in -C & \\
        f_2(g) = 0.
    \end{array}\right.
\end{align*}
We define the Lagrangian
\begin{align*}
    \mathcal{L}: \SET(\R^+) \times C \times \R \to \R, \ \mathcal{L}(g,\mu,\nu) = f(g) + \langle \mu, f_1(g)\rangle_{L^2} + \langle\nu, f_2(g)\rangle_\R.
\end{align*}
Let $\mu\in C,\  \nu \in \R$ and $g,g_0\in \SET(\R^+)$, and note
\begin{align*}
    & f'(g_0) (g-g_0) = \int_0^\infty 2(g_0(s) - \alpha(sm_{\lambda,\alpha})) (g(s) - g_0(s)) \dd s \\
    &\langle \mu, f_1'(g_0) (g-g_0) \rangle_{L^2}= \int_0^\infty -\mu(s) (g(s)-g_0(s)) \dd s \\
    & \langle \nu, f_2'(g_0) (g-g_0)\rangle_\R = \int_0^\infty \nu (g(s) - g_0(s)) \dd s,
\end{align*}
where the derivatives are directional derivatives. Hence,
\begin{align*}
    \mathcal{L}'(g_0,\mu,\nu) (g-g_0) 
    &= \big(f'(g_0) + \langle \mu, f_1'(g_0)\rangle_{L^2} + \langle \nu, f_2'(g_0) \rangle_\R\big) ( g- g_0) \\
    &= \int_0^\infty \big[2(g_0(s) - \alpha(s m_{\lambda, \alpha})) - \mu(s) + \nu\big] (g(s)-g_0(s))  \dd s.
\end{align*}
The necessary optimality condition (\cite{infiniteDimensionalLagrangeMultiplier}) is $\mathcal{L}'(g_0,\mu,\nu)(g-g_0) \geq 0$ for all $g\in\SET(\R^+)$. With that in mind, let
\begin{align*}
    &\Tilde{g} \coloneqq \big(\alpha(sm_{\lambda, \alpha}) - \frac{\Tilde{\nu}}{2}\big)^+ 
    & \Tilde{\mu} \coloneqq 2\big( \frac{\Tilde{\nu}}{2} - \alpha(s m_{\lambda, \alpha})\big)^+
    && \Tilde{\nu} \ \text{such that} \ \int_0^\infty \Tilde{g}(s) \dd s = 1.
\end{align*}
Note, such a $\Tilde{\nu}$ exists by Appendix \autoref{lem::integral1nu}. It follows that $\Tilde{g}\in\SET(\R^+)$, $g\geq 0$, $||\Tilde{g}||_1 =1$, $\Tilde{\mu}\in C$ and $\langle \Tilde{\mu}, \Tilde{g} \rangle_{L^2} = 0$. Moreover, 
\begin{align*}
    &\mathcal{L}'(\Tilde{g},\Tilde{\mu},\Tilde{\nu}) (g-\Tilde{g})  = \\
    &= \int_0^\infty\left(2\left(\big(\alpha(sm_{\lambda, \alpha}) - \frac{\Tilde{\nu}}{2}\big)^+ - \alpha(s m_{\lambda, \alpha})\right) - 2\left( \frac{\Tilde{\nu}}{2} - \alpha(s m_{\lambda, \alpha})\right)^+  + \Tilde{\nu} \right) (g(s)-\Tilde{g}(s)) \dd s \\
    &= 0
\end{align*}
for any $g\in \SET(\R^+)$. By \cite[Thm.~3]{infiniteDimensionalLagrangeMultiplier}, it follows that $\Tilde{g}$ is an optimal solution to \eqref{prob::sec4approx_proj} and thus for \eqref{prob::sec4approx}. If $\Tilde{g}$ is an optimal solution to \eqref{prob::sec4approx}, i.e.~the approximated OCP, then it follows by \autoref{rem::apprx_g_admissable} that $\Tilde{u}(t) =\alpha(t) - \Tilde{g}(t \Tilde{K}(\lambda, \alpha, \Tilde{g}))$ is an admissible control to the original problem \eqref{prob::OCP}. Moreover, $||\Tilde{g}||_{1;[0,\infty)} = 1$ and therefore
\begin{align*}
    \Tilde{u}(t) &= \alpha(t) - \Tilde{g}(t \Tilde{K}(\lambda, \alpha, \Tilde{g})) 
    = \alpha(t) - \Tilde{g}\left(t\frac{||\Tilde{g}||_{1;[0,\infty)}}{m_{\lambda, \alpha}}\right)
    = \alpha(t) - \left( \alpha\left(t \frac{m_{\lambda, \alpha}}{m_{\lambda, \alpha}} \right) - \frac{\Tilde{\nu}}{2}  \right)^+ \\
    &= \alpha(t) - \left(\alpha(t) - \frac{\Tilde{\nu}}{2} \right)^+.
\end{align*}
\end{proof}

\subsection{Numerical validation of optimal control approximation}\label{sec::numerics}

In this section, we analyse how the approximated solution of \eqref{prob::OCP} derived in \autoref{thm::approximatedcontrol} performs, compared to a numerically computed solution. Comparisons are made in execution time, cost function value, shape of the controls and shape of the resulting trajectories.
For the numerical solution, we first discretize the dynamics and then write the optimal control problem as a constrained nonlinear optimization problem using CasADi (\cite{Casadi}). We use a multiple shooting method for the dynamics and the rockit library (\cite{rockit}) for easier implementation in CasADi. The  constrained nonlinear optimization problem is solved using IPOPT (\cite{ipopt}).
Note that since the loss landscape of the discretized dynamics might become arbitrarily complicated, the numerical solution might only be a local minimum of \eqref{prob::OCP}.
\\
For the numerical validation, we define three test cases:
\\\\    
(T1) We start with the forcing $\alpha(t) = \xi \sech^2(r (t-t_{peak}))$, for $r,\xi>0$ and $t_{peak}\in\R$. This type of forcing has recently been used to analyse overshoots and tipping mechanisms in a model of the Atlantic Meridional Overturning Circulation (\cite{Lux24}). We consider two cases: 
    \begin{enumerate}[label=(\alph*)]
        \item $\xi=r=1$ and $t_{peak}=0$;
        \item $\xi=5,\ r=4.5$ and $t_{peak}=0$.
    \end{enumerate}
(T2) We use forcing $\alpha = \big(B + A \sin (r t)\big) \indicator_{[t_0,t_1]}$ for $A,B,r>0$ and $t_0<t_1 \in \R$. This is a slight adaptation to \cite{avitabile22}, where a similar forcing was used to study the excitability of neurons in a Network model. Note, the indicator is used to ensure that $\alpha \in L^1$ and $B\geq A$ is necessary for $\alpha \geq 0$. We consider $A=B=1$ and $t_0=0, \ t_1=5$.
\\\\
(T3) Finally, we take a very rigid $\alpha$ to compare both methods for very nonsmooth forcings. In this case $\alpha(t) = \sum_{i=1}^{20} \xi_i \indicator_{t_i,t_{i+1}}(t)$, where $\xi_i$ are randomly sampled from a uniform distribution on $[0,4]$\footnote{The plots are generated for the specific realization of $\xi=$[1.87503898, 2.430023, 1.01158888, 1.93248402, 0.21160768, 3.77938894,
            2.07412297, 0.7612135,  1.53545624, 1.16285869, 0.41309513, 0.63971887, 
            2.60766738, 1.85923124, 3.41572873, 3.71215604, 0.56773364, 2.73073052, 
            1.71919692, 2.52966728].} and $0=t_0<t_1...<t_{20}=5$ are $20$ equidistant time steps between $0$ and $20$. 
\\\\
The results for the cost function values and the execution times for both methods for the different test cases are given in \autoref{tab::1000} for $n=1000$, and in \autoref{tab::5000} for $n=5000$ discretization points. 
The cost function values are computed using the left rectangle rule. Further, for the numerical solution only the time in IPOPT is reported and the overhead of building a CasADi model is neglected. Moreover, the following are single run results performed on an i7-13700H 2400MHz, 16GB RAM laptop. 
\\
\begin{table}[ht!]
 \centering
\begin{tabular}{ |p{3cm}||p{3.5cm}|p{3.5cm}|p{1.9cm}|p{1.9cm}| }
 \hline
External Forcing & Cost Function Num. & Cost Function Aprx. & Exec. Time Num. & Exec. Time Aprx. \\
 \hline
 (T1a) & 0.06 & 0.16 & 1.55s & 0.00022s\\
 (T1b) & 3.08 & 3.19 & 7.59s & 0.00026s\\
 (T2)  & 3.21 & 5.74 & 1.41s & 0.00022s\\
 (T3)  & 11.4 & 16.2 & 2.09s & 0.00024s \\
 \hline
\end{tabular}
    \caption{Table for $n=1000$ discretization points.}
    \label{tab::1000}
\end{table}

\begin{table}[ht!]
 \centering
\begin{tabular}{ |p{3cm}||p{3.5cm}|p{3.5cm}|p{1.9cm}|p{1.9cm}| }
 \hline
External Forcing & Cost Function Num. & Cost Function Aprx. & Exec. Time Num. & Exec. Time Aprx. \\
 \hline
 (T1a) & 0.06 & 0.16 & 7.44s & 0.00023s\\
 (T1b) & 3.09 & 3.13 & 57.6s & 0.00027s\\
 (T2)  & 3.21 & 5.74 & 8.42s & 0.00033s\\
 (T3)  & 11.4 & 16.2 & 11.8s & 0.00040s\\
 \hline
\end{tabular}
    \caption{Table for $n=5000$ discretization points.}
    \label{tab::5000}
\end{table}

\textbf{Execution time:} \\
We see that the approximated solution has a vastly shorter execution time than the numerically computed solution. This is no surprise given the closed formula of the approximated solution \autoref{thm::approximatedcontrol}. In particular, we see that the execution time for the approximated solution does not scale poorly with the number of discretization steps. This is particularly useful in cases where rigid forcings or long time horizons require many discretization steps. For $n=5000$ steps, this can lead to the approximated calculation being $2\times 10^5$ times faster than the numerical calculation (see (T1b)).
\\\\
\textbf{Cost functions values:} \\
The cost functions values of the numerical solution are lower than the corresponding costs for the approximated solution. The differences range from a very large relative error of $1.66$ (see (T1a)) to a small relative error of $10^{-2}$ (see (T1b)). As a consequence of how the approximated rate function \autoref{thm::largekappa} (ii) was derived, by neglecting the recovery of \eqref{eqq::y} to the stable moving equilibrium, it is exact if applied to a delta function. Hence, heuristically speaking, the closer the applied input is to a delta function, the better the approximated control performs. This is exactly what one can observe in (T1a) and (T1b).
\\\\
In the following, we compare both methods with respect to the shape of the controls and the resulting trajectories. 
\autoref{fig::1a}, \autoref{fig::1b}, \autoref{fig::2} and \autoref{fig::3} present the results for all test cases and $n=1000$ discretization points.
In all figures, the red line denotes $-\sqrt{\lambda}$ the minimal value that may be obtained before diverging to $-\infty$. The light blue dotted line corresponds to the approximated control \autoref{thm::approximatedcontrol}, and the dark blue dash-dotted line corresponds to the numerically computed control using CasADi. The solid and dashed black lines in the plot on the bottom right correspond to the stable and unstable equilibria of the autonomous saddle node normal form. Further, $\lambda=0.3$ for all simulations.
\\\\
\textbf{Shape of controls:} \\
Surprisingly, the shapes of the approximated and numerical controls are comparable in all cases, even if the achieved cost function values are very different. The biggest differences are in amplitude and not necessarily general shape (e.g.~(T2)) or smoothness of the control (e.g.~(T2),(T3)).
\\\\
\textbf{Resulting Trajectories:} \\
Finally, the resulting trajectories also have a similar shape. Moreover, the trajectory of the numerical solution is always underneath that of the approximated solution. Furthermore, numerical and approximated trajectories have the same ``spikes'' although often slightly shifted (e.g.~(T2),(T3)).

\begin{figure}[ht!]
    \centering
\includegraphics[width=0.9\linewidth]{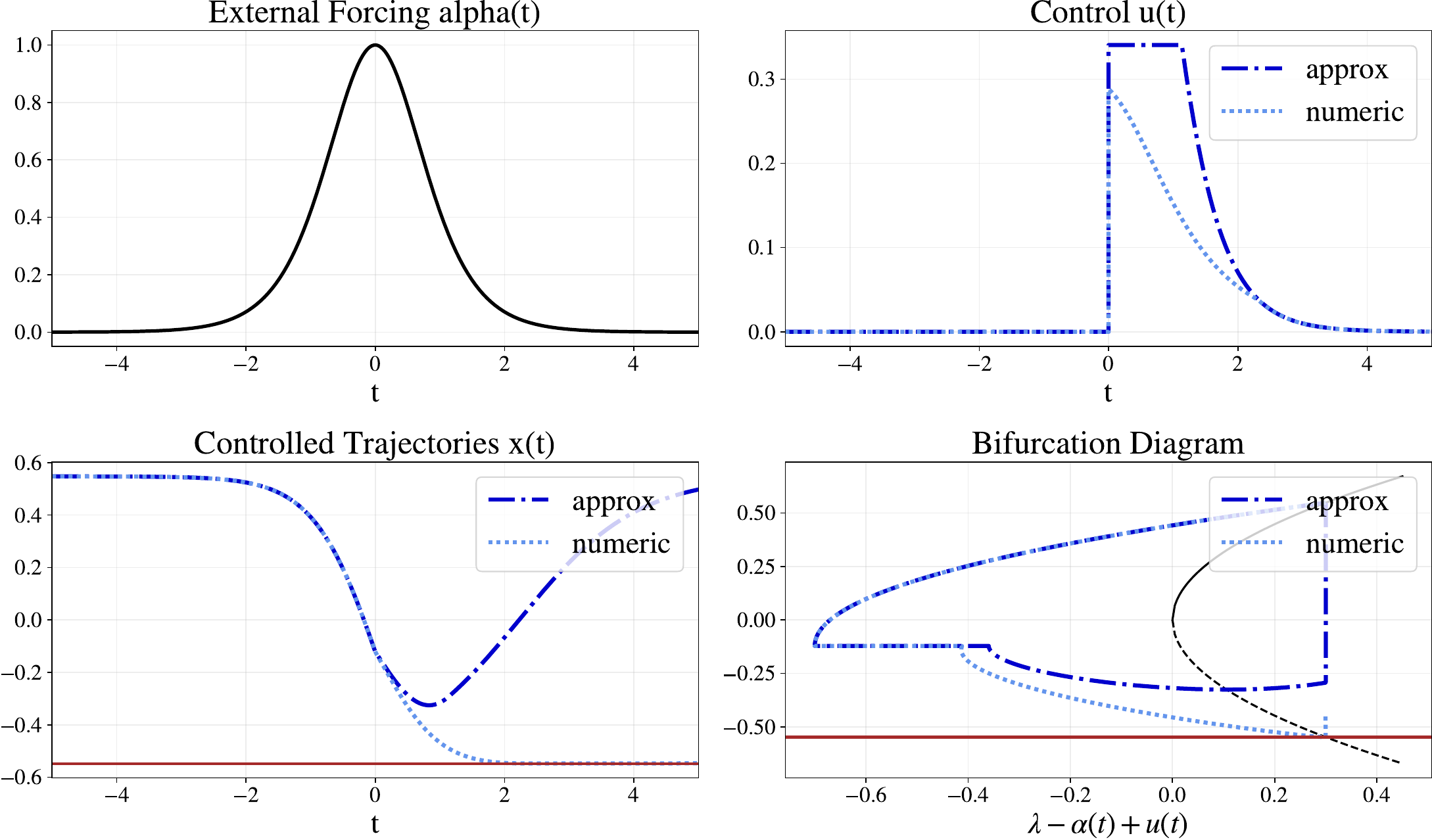}
    \caption{Resulting Trajectories for (T1a).}
    \label{fig::1a}
\end{figure}

\begin{figure}[ht!]
    \centering
    \includegraphics[width=0.9\linewidth]{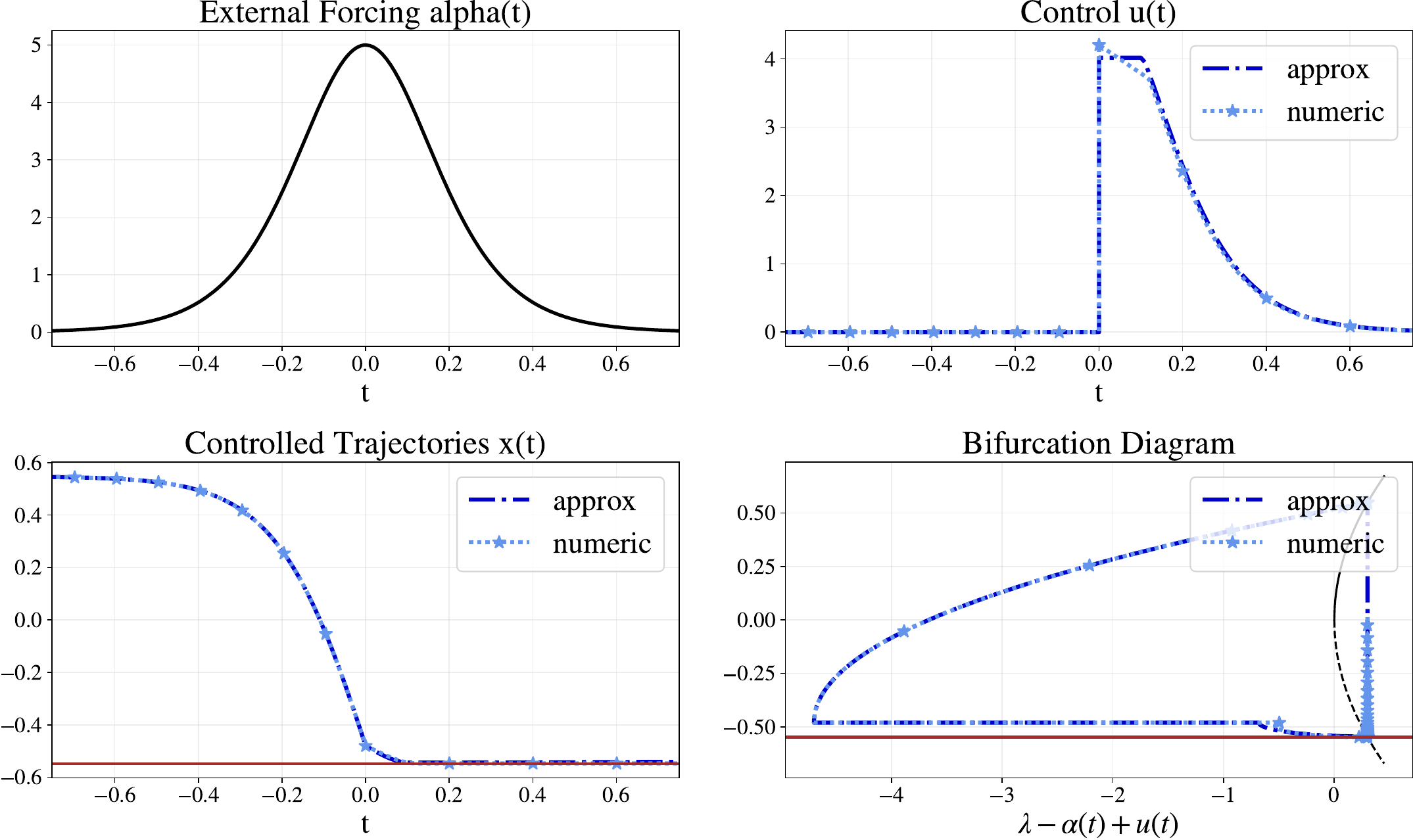}
    \caption{Resulting Trajectories for (T1b).}
    \label{fig::1b}
\end{figure}

\begin{figure}[ht!]
    \centering
    \includegraphics[width=0.9\linewidth]{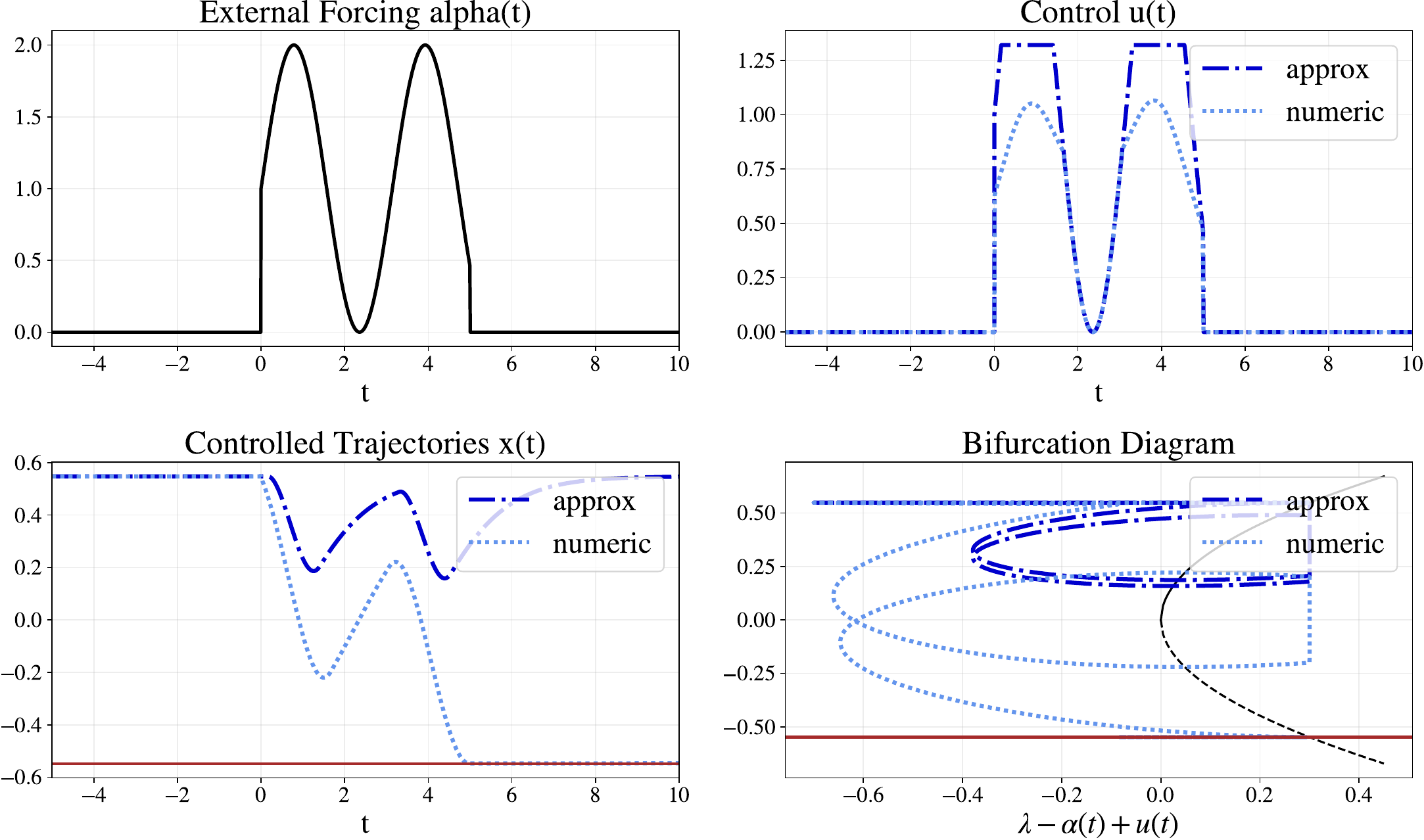}
    \caption{Resulting Trajectories for (T2).}
    \label{fig::2}
\end{figure}

\begin{figure}[ht!]
    \centering
    \includegraphics[width=0.9\linewidth]{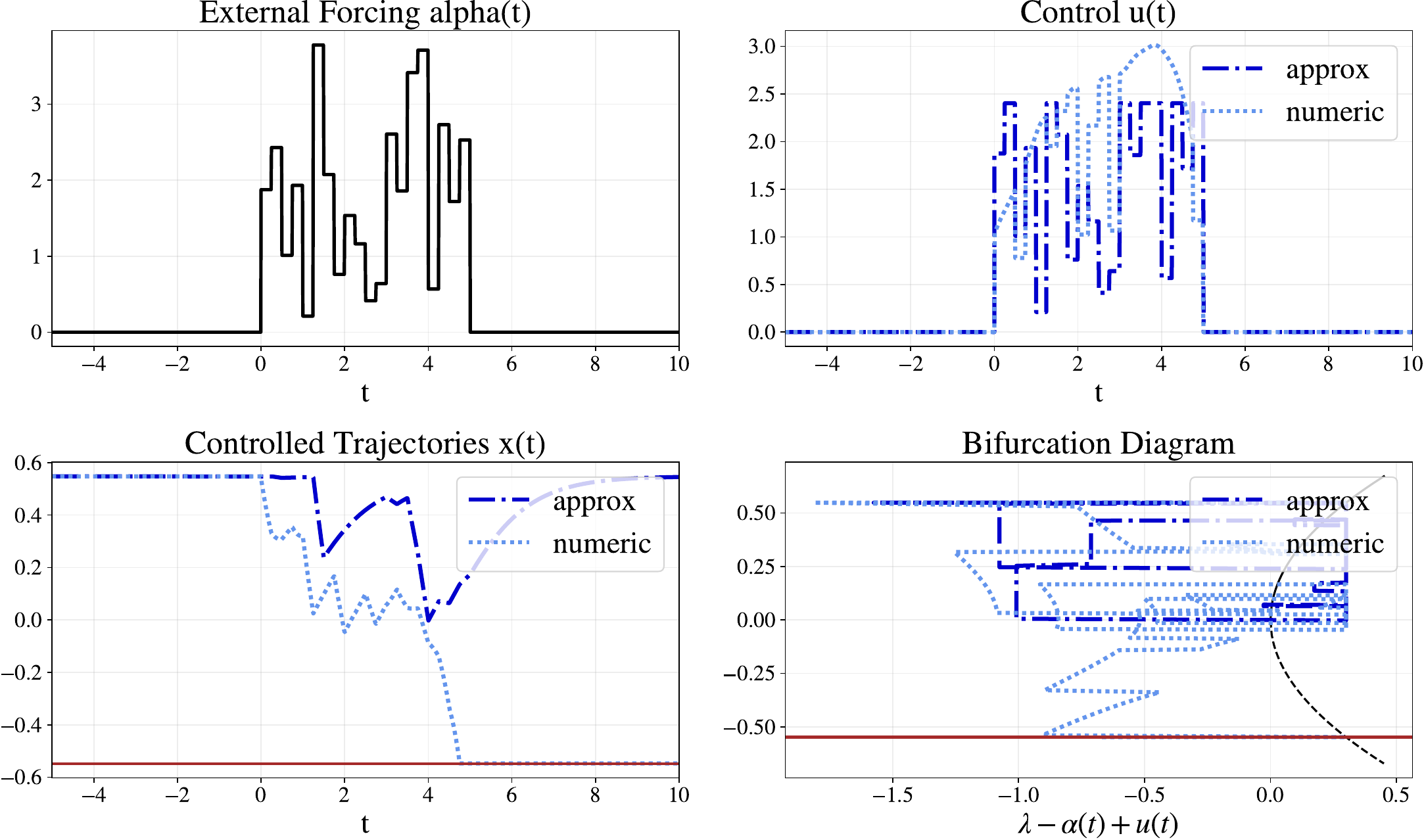}
    \caption{Resulting Trajectories for (T3).}
    \label{fig::3}
\end{figure}

\clearpage

\clearpage

\section{Conclusion and outlook}

We laid the foundation for a dynamically informed optimal control theory of nonautonomous saddle node bifurcations. The key step was the transition from the bifurcation-induced tipping to a rate-induced overshooting problem, where we introduced a rate function $\Kappa$ to assign the critical rate for tipping corresponding to the applied control. We showed that this rate function is indeed well-defined for the nonautonomous saddle node bifurcation normal form by showing that the critical rate is unique. We proved this for initial conditions $x(0)=\mu^x >0$, however, we expect similar arguments to work for $\mu^x > -\sqrt{\lambda}$. 
The introduction of this rate function led to the reformulation of our constrained optimal control problem \eqref{prob::OCP} into an unconstrained minimization problem \eqref{prob::RCP}. This enables the use of powerful optimization methods from the toolbox of unconstrained nonlinear optimization, which can be further exploited in future research. Additionally, we used an approximation for the rate function approximation, based on which we derived a closed form expression for the optimal control, which is still guaranteed to respect the non-tipping requirement. This opens up the floor to connect to the fields of Uncertainty Quantification (uncertain parameters in dynamics and/or noise), data assimilation and model predictive control, where the optimal control problem \eqref{prob::OCP} needs to be solved many times. 
\\
Moreover, our reformulation naturally provides flexibility between computational efficiency and precision, without dropping the non-tipping constraint, by choosing the accuracy at which the rate function is approximated. This was further explored in Section \ref{sec::rateApproxTheory} and numerically validated in Section \ref{sec::numerics}.
\\
\\
Note that the reformulation of \eqref{prob::OCP} using the critical rate function $\Kappa$ is not the only choice. Alternatively, one can also define a critical size function $\xi$ and consider
\begin{align*}
    \min_{g\in\SET(\R^+)} \int_0^\infty (\alpha(t) - \xi(g)g(t))^2 \dd t.
\end{align*}
Then, the argumentation in Section \ref{sec::reformOCproblem} would still hold if one can show that there exists a unique critical size for every forcing $g\in\SET(\R^+)$. In fact, one might even consider any combination of rate-, size- and phase-induced transitions. The reason we decided for the rate-induced approach here is that it seems the least invasive, as it only changes the time scale at which the effective forcing $g$ is applied, but not $g$ itself. However, if one aims to solve the reformulated problem using variational approaches, then, the size function may be more suitable as it yields a more tractable and more explicit functional derivative of the loss function.
\\
\\
Naturally, an important next step is to extend the theory developed here to more complex multidimensional models, such as the Hodgkin–Huxley model \cite{Wilson22} or the neuronal network model \cite{avitabile22}. One important advantage of our newly developed method compared to more general approaches for nonlinear constrained optimization problems is that the method really exploits the inherent bifurcation structure of the given problem, which suggests better scaling properties in the multi-dimensional setting for saddle node bifurcations. This is because the core bifurcating dynamics remain present, see \cite{universal_explosive_phenomena}. To benefit therefrom, the framework of time-dependent center manifold reduction (\cite[Sec.~4]{NonautonomousDynamicalSystems},\cite{wiggins2025}) seems well suited and is a very interesting starting point to extend our theory to more complex models with inherent saddle node bifurcation structure and use our technique in application scenarios.

\clearpage

\section{Appendix}\label{sec::appendix}

\begin{proposition}\label{app::prop::cond_allsystems}
    Let $p \in \SET(\R)$ and $\lambda, \kappa>0$, then:
    \begin{align*}
        & f(t,x) = \lambda - p(t) -x^2; \\
        & g(t,x) = \lambda - \left(x - \int_{-\infty}^t p(s) \dd s\right)^2; \\
        & q(t,x) = \frac{1}{\kappa^2} \cos^2(x) + (p(t) - \lambda) \sin^2(x);
    \end{align*}
    all satisfy \autoref{cond::1}. In addition, $f,g$ are strictly concave and coercive, which means that
    \begin{itemize}
        \item Coercivity: There exists a subset $\mathcal{R}\subset \R$ of full Lebesgue measure and $\delta>0$ such that 
        \begin{align*}
        \limsup_{x \to \pm \infty} f(t,x)/x^2 < -\delta  \qquad \limsup_{x \to \pm \infty} g(t,x)/x^2 < -\delta.
        \end{align*}
        uniformly for all $t\in \mathcal{R}$
        \item Strict Concavity: For all $j\in\N$ there exists a constant $\delta_j>0$ such that $l$-a.a. 
        \begin{align*}
            \sup_{x_1,x_2 \in[-j,j], x_1\neq x_2} \frac{f_x(t,x_2) - f_x(t,x_1)}{x_2-x_1} < -\delta_j \qquad \sup_{x_1,x_2 \in[-j,j], x_1\neq x_2} \frac{g_x(t,x_2) - g_x(t,x_1)}{x_2-x_1} < -\delta_j.
        \end{align*}
    \end{itemize}
\end{proposition}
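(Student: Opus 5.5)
The plan is to verify the four items of \autoref{cond::1} for each of $f,g,q$ directly, and then to check coercivity and strict concavity for $f$ and $g$ by computing their $x$-derivatives. Throughout I will use the elementary facts that $p\in\SET(\R)$ gives $\|p\|_{\infty;\R}<\infty$, and that $P(t)\coloneqq\int_{-\infty}^t p(s)\dd s$ is continuous with $|P(t)|\le\|p\|_{1;\R}$.

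\emph{Measurability and local boundedness.} $f(t,x)=\lambda-p(t)-x^2$ is a sum of a Borel function of $t$ and a continuous function of $x$, hence Borel measurable (after modifying $p$ on a null set to a Borel representative). The function $g$ is continuous in $(t,x)$, since $P$ is continuous, and is therefore Borel. The function $q$ is measurable by the same argument as for $f$. For item (ii), on $[-j,j]$ we get the bounds
\begin{align*}
|f|&\le\lambda+\|p\|_\infty+j^2,\\
|g|&\le\lambda+(j+\|p\|_1)^2,\\
|q|&\le 1/\kappa^2+\|p\|_\infty+\lambda,
\end{align*}
each valid for $l$-a.a.\ $t$.

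\emph{Regularity in $x$ and Lipschitz bounds.} For every $t$ the map $x\mapsto f,g,q$ is $C^\infty$. The derivatives are
\begin{align*}
f_x&=-2x,\\
g_x&=-2(x-P(t)),\\
q_x&=-(1/\kappa^2+\lambda-p(t))\sin(2x),
\end{align*}
and the second derivatives are $f_{xx}=g_{xx}=-2$ and $q_{xx}=-2(1/\kappa^2+\lambda-p(t))\cos(2x)$. By the mean value theorem, item (iv) holds with
\[
l_j=\max\bigl\{2j+2\|p\|_1,\;2(1/\kappa^2+\lambda+\|p\|_\infty),\;2\bigr\},
\]
since $\sup_{[-j,j]}|f_x|$, $\sup_{[-j,j]}|g_x|$ and $\sup|q_x|$ are bounded by this constant, and likewise for the second derivatives.

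\emph{Coercivity and strict concavity.} For coercivity, I compute
\[
\frac{f(t,x)}{x^2}=\frac{\lambda-p(t)}{x^2}-1\to-1
\]
uniformly on the full-measure set where $|p|\le\|p\|_\infty$. Similarly,
\[
\frac{g(t,x)}{x^2}=\frac{\lambda}{x^2}-\Bigl(1-\frac{P(t)}{x}\Bigr)^2\to-1
\]
uniformly in $t$, because $|P|\le\|p\|_1$. Hence $\delta=1/2$ works for both. For strict concavity, the difference quotients of $f_x$ and $g_x$ are both exactly $-2$, so $\delta_j=1$ works.

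The only point needing slight care is uniformity in $t$ of the Lipschitz constants for $g$, which rests on the uniform bound $\|P\|_\infty\le\|p\|_1$. I expect no real obstacle here.
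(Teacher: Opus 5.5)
Your proposal is correct and follows essentially the same route as the paper: you check the items of \autoref{cond::1} directly for each of $f,g,q$, and you read off coercivity and strict concavity from $f_{xx}=g_{xx}=-2$ together with the uniform bound $|P|\le\|p\|_{1;\R}$. The mean value theorem argument for (iv), the explicit coercivity limits and the Borel-representative remark only spell out what the paper calls ``clear'' or ``immediately apparent''; note also that your bound $|g|\le\lambda+(j+\|p\|_{1;\R})^2$ correctly uses $\|p\|_{1;\R}$, where the paper writes $\|p\|_{\infty;\R}$.
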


\begin{proof}
    We start by proving (i)-(iv) of \autoref{cond::1}. \\
    (i) \& (iii) Clearly, all three functions are Borel measurable and differentiable in $x$. \\\\
    (ii) Note that
    \begin{align*}
        & \sup_{x\in[-j,j]} f(t,x) \leq ||p||_{\infty;\R} + \lambda  \qquad  &&\text{and} \qquad\sup_{x\in[-j,j]} -f(t,x) \leq ||p||_{\infty;\R}+ j^2; \\
        & \sup_{x\in[-j,j]} g(t,x) \leq \lambda  \qquad && \text{and} \qquad \sup_{x\in[-j,j]} -g(t,x) \leq ||p||_{\infty;\R}^2 + j^2 + 2j||p||_{\infty;\R}; \\
        & \sup_{x\in[-j,j]} |q(t,x)| \leq \frac{1}{\kappa^2} + ||p||_{\infty;\R} + \lambda .
    \end{align*}
    \\
    (iv) We treat the functions separately. The statements for $f$ are trivial, since $f(t,x_1) - f(t,x_2) = x_2^2 -x_1^2$ and $f_x(t,x_1) - f_x(t,x_2) = 2(x_2 -x_1)$. For $g$, we get 
    \begin{align*}
        \sup_{x_1,x_2\in[-j,j]} \frac{|g(t,x_2) - g(t,x_1)|}{|x_2-x_1|} 
        &= \sup_{x_1,x_2\in[-j,j]} \frac{|x_1^2 -x_2^2 - 2(x_1-x_2) \int_0^t p(s) \dd s|}{|x_2-x_1|} \\
        &\leq \sup_{x_1,x_2\in[-j,j]} \frac{|x_1^2 -x_2^2|}{|x_2-x_1|} + 2 ||p||_{1;\R} \sup_{x_1,x_2\in[-j,j]} \frac{|x_1-x_2|}{|x_2-x_1|}
    \end{align*}
    from which the first claim follows. Further, 
    \begin{align*}
        \sup_{x_1,x_2\in[-j,j]} \frac{|g_x(t,x_2) - g_x(t,x_1)|}{|x_2-x_1|} 
        &= \sup_{x_1,x_2\in[-j,j]} \frac{|2(x_1-x_2)|}{|x_2-x_1|} 
    \end{align*}
    which shows the second claim. \\
    Finally, for $q$ we obtain
    \begin{align*}
        &\sup_{x_1,x_2\in[-j,j]} \frac{|q(t,x_2) - q(t,x_1)|}{|x_2-x_1|} 
        = \sup_{x_1,x_2\in[-j,j]} \frac{|\frac{1}{\kappa^2}(\cos^2(x_2) - \cos^2(x_1)) + (p(t) - \lambda) (\sin^2(x_2) -\sin^2(x_1)) |}{|x_2-x_1|} \\
        &\leq \frac{1}{\kappa^2}\sup_{x_1,x_2\in[-j,j]} \frac{|\cos^2(x_2) - \cos^2(x_1) |}{|x_2-x_1|} + (||p||_{\infty;\R} + \lambda)\sup_{x_1,x_2\in[-j,j]} \frac{|\sin^2(x_2) -\sin^2(x_1) |}{|x_2-x_1|},
    \end{align*}
    which by the differentiability of $\sin$ and $\cos$ can naturally be upper bounded for any $j\in\N$. Moreover, 
    \begin{align*}
        \sup_{x_1,x_2\in[-j,j]} \frac{|q_x(t,x_2) - q_x(t,x_1)|}{|x_2-x_1|} 
        &= \sup_{x_1,x_2\in[-j,j]} \frac{|2(-1/\kappa^2 + p(t) - \lambda)(\sin(x_2)\cos(x_2) - \sin(x_1)\cos(x_1))|}{|x_2-x_1|} \\
        &\leq 2(1/\kappa^2 + ||p||_{\infty;\R} + \lambda) \sup_{x_1,x_2\in[-j,j]}\frac{|\sin(x_2)\cos(x_2) - \sin(x_1)\cos(x_1)|}{|x_2-x_1|},
    \end{align*}
    where again the smoothness of $\sin$ and $\cos$ implies that one can upper bound the last term for any $j\in \N$. 
    \\\\
    The coercivity of $f,g$ is immediately apparent, since $p\in L^1(\R)$. 
    \\
    For concavity, note that
    \begin{align*}
        \frac{f_x(t,x_2) - f_x(t,x_1)}{x_2-x_1} = -2\frac{x_2 - x_1}{x_2-x_1} = -2
    \end{align*}
    and 
    \begin{align*}
        \frac{g_x(t,x_2) - g_x(t,x_1)}{x_2-x_1} = -2\frac{x_2 - x_1}{x_2-x_1} = -2.
    \end{align*}
\end{proof}

\vspace{0.75cm}

\begin{proof}[Proof of \autoref{thm::largekappa}]
    We argue via the $y$-system \eqref{eqq::y}. The statements for \eqref{eqq::x} follow by \autoref{pro::xysamecase}. 
    \\\\
    (ii) Let $\attr^x(0) \in (-\sqrt{\lambda}, \sqrt{\lambda})$ and $g\geq 0$. Note that $ \attr^y(0) = \attr^x(0) + \int_{-\infty}^0 \alpha(t) \dd t$ and let $\kappa > \frac{||g||_{1;[0,\infty)}}{\sqrt{\lambda} + \attr^x(0)}$. 
    Define
    \begin{align*}
        \varepsilon \coloneqq \min\left\{ \sqrt{\lambda}, \  \sqrt{\lambda} + \attr^x(0) - \frac{1}{\kappa} ||g||_{1;[0,\infty)} \right\} \quad \in (0,\sqrt{\lambda}),
       \end{align*}
       then
       \begin{align}\label{eqq::proof::normgestimate}
        \frac{1}{\kappa} ||g||_{1;[0,\infty)} \leq \sqrt{\lambda} + \attr^x(0) - \varepsilon
    \end{align}
    and therefore 
    \begin{align}\label{eqq::proof::initialcondbigger}
        \begin{split}
        &-\sqrt{\lambda} + \int_{-\infty}^0 \alpha(t) \dd t + \frac{1}{\kappa} \int_0^{\infty} g(t) \dd t + \frac{\varepsilon}{2} 
        \leq -\sqrt{\lambda} + \int_{-\infty}^0 \alpha(t) \dd t + \frac{1}{\kappa} ||g||_{1,[0,\infty)} + \frac{\varepsilon}{2} \\
        &\leq \attr^x(0) + \int_{-\infty}^0 \alpha(t) \dd t - \frac{\varepsilon}{2} < \attr^y(0).
        \end{split}
    \end{align}
    Assume that \eqref{eqq::y}$_\kappa$ is in Case B or C, by \autoref{thm::casesy} this means that either $\lim_{t\to\infty} \attr^y(t) = -\sqrt{\lambda} + \int_{-\infty}^0 \alpha(t) \dd t + \frac{1}{\kappa} \int_0^\infty g(t) \dd t$ or $\attr^y$ diverges to $-\infty$ in finite time. In both cases the continuity of $\attr^y$ combined with \eqref{eqq::proof::initialcondbigger} implies the existence of $t_1, \delta > 0$ such that
    \begin{align}
        \nonumber &\attr^y(t_1) = -\sqrt{\lambda} + \int_{-\infty}^0 \alpha(t) \dd t + \frac{1}{\kappa} \int_0^{\infty} g(t) \dd t + \frac{\varepsilon}{2} \qquad \text{and}\\
         \label{eqq::proof::smallerinterval}&\attr^y(t) < -\sqrt{\lambda} + \int_{-\infty}^0 \alpha(s) \dd s + \frac{1}{\kappa} \int_0^{t} g(s) \dd s + \frac{\varepsilon}{2} \qquad \forall \  t\in(t_1, t_1+\delta).
     \end{align} 
    However,
     \begin{align*}
         \dot{\attr}^y(t_1) &= \lambda + \left( \attr^y(t_1) - \int_{-\infty}^0 \alpha(t) \dd t + \int_0^{t_1} g(t \kappa) \dd t\right)^2 \\
         &= \lambda + \left( -\sqrt{\lambda} + \int_{-\infty}^0 \alpha(t) \dd t + \int_0^{\infty} g(t\kappa) \dd t + \frac{\varepsilon}{2}  - \int_{-\infty}^0 \alpha(t) \dd t + \int_0^{t_1} g(t \kappa) \dd t\right)^2 \\
         &= \lambda + \left( \int_{t_1}^\infty g(t\kappa) \dd t - \sqrt{\lambda} + \frac{\varepsilon}{2}\right)^2 
     \end{align*}
     and therefore $g\geq 0$ together with \eqref{eqq::proof::normgestimate} imply 
     \begin{align*}
         \dot{\attr}^y(t_1) = \lambda + \bigg(\underbrace{ \int_{t_1}^\infty g(t\kappa) \dd t  -\sqrt{\lambda} + \frac{\varepsilon}{2} }_{\in (-\sqrt{\lambda}, \attr^x(0) - \varepsilon/2) \subset (-\sqrt{\lambda},\sqrt{\lambda})}\bigg)^2 >0
     \end{align*}
     contradicting \eqref{eqq::proof::smallerinterval}. \\\\
     (i) We, again, start with the case $\attr^x(0) \in (-\sqrt{\lambda}, \sqrt{\lambda})$. The structure of the proof is similar to (ii), however, since $g$ can be negative now, we could have $\int_{t_1}^\infty g(t\kappa) \dd t - \sqrt{\lambda} + \varepsilon/2 \leq-\sqrt{\lambda}$. This needs to be circumvented. Let 
     \begin{align*}
         \varepsilon \coloneqq \min\left\{\sqrt{\lambda} + \attr^x(0) - \frac{1}{\kappa} ||g||_{1;[0,\infty)}, \sqrt{\lambda} - \attr^x(0) - \frac{1}{\kappa} ||g||_{1;[0,\infty)} \right\}
     \end{align*}
     and $\kappa_2$ large enough such that 
     \begin{align*}
         & \sqrt{\lambda} + \attr^x(0) - \frac{3}{\kappa} ||g||_{1;[0,\infty)} \in (0,2\sqrt{\lambda})\\
         &\sqrt{\lambda} - \attr^x(0) - \frac{3}{\kappa} ||g||_{1;[0,\infty)} \in (0,2\sqrt{\lambda})
     \end{align*}
     for all $\kappa>0$. Note that this also implies $\varepsilon > 0$. By the same steps as before, we arrive at \eqref{eqq::proof::smallerinterval}, where again
     \begin{align*}
         \dot{\attr}^y(t_1) = \lambda + \left( \int_{t_1}^\infty g(t\kappa) \dd t -\sqrt{\lambda} + \frac{\varepsilon}{2}\right)^2 .
     \end{align*}
     Further, for $\attr^x(0)\in [0,\sqrt{\lambda})$
     \begin{align*}
        1.\qquad &\int_{t_1}^\infty g(t\kappa) \dd t -\sqrt{\lambda} + \frac{\varepsilon}{2} 
         \geq -\frac{1}{\kappa} ||g||_{1;[0,\infty)} -\sqrt{\lambda} + \frac{\varepsilon}{2} \\
         &= -\frac{1}{\kappa} ||g||_{1;[0,\infty)} -\sqrt{\lambda} + \frac{1}{2} \left(\sqrt{\lambda} - \attr^x(0) -\frac{1}{\kappa} ||g||_{1;[0,\infty)} \right) \\
         &= \frac{1}{2} \left( -\sqrt{\lambda} - \attr^x(0) - \frac{3}{\kappa} ||g||_{1;[0,\infty)}  \right) > -\sqrt{\lambda}
         \\\\
        2.\qquad &\int_{t_1}^\infty g(t\kappa) \dd t -\sqrt{\lambda} +\frac{\varepsilon}{2} 
         \leq \frac{1}{\kappa} ||g||_{1;[0,\infty)} -\sqrt{\lambda} + \frac{\varepsilon}{2} \\
         &= \frac{1}{\kappa} ||g||_{1;[0,\infty)} -\sqrt{\lambda} + \frac{1}{2} \left(\sqrt{\lambda} - \attr^x(0) -\frac{1}{\kappa} ||g||_{1;[0,\infty)} \right) \\
         &= \frac{1}{2} \left( -\sqrt{\lambda} - \attr^x(0) + \frac{1}{\kappa} ||g||_{1;[0,\infty)}  \right) < \sqrt{\lambda}
     \end{align*}
     Similarly, for $\attr^x(0) \in (-\sqrt{\lambda},0)$:
     \begin{align*}
        1.\qquad &\int_{t_1}^\infty g(t\kappa) \dd t -\sqrt{\lambda} + \frac{\varepsilon}{2} 
         \geq -\frac{1}{\kappa} ||g||_{1;[0,\infty)} -\sqrt{\lambda} + \frac{\varepsilon}{2} \\
         &= -\frac{1}{\kappa} ||g||_{1;[0,\infty)} -\sqrt{\lambda} + \frac{1}{2} \left(\sqrt{\lambda} + \attr^x(0) -\frac{1}{\kappa} ||g||_{1;[0,\infty)} \right) \\
         &= \frac{1}{2} \left( -\sqrt{\lambda} + \attr^x(0) - \frac{3}{\kappa} ||g||_{1;[0,\infty)}  \right) > -\sqrt{\lambda};
         \\\\
        2.\qquad &\int_{t_1}^\infty g(t\kappa) \dd t -\sqrt{\lambda} +\frac{\varepsilon}{2} 
         \leq \frac{1}{\kappa} ||g||_{1;[0,\infty)} -\sqrt{\lambda} + \frac{\varepsilon}{2} \\
         &= \frac{1}{\kappa} ||g||_{1;[0,\infty)} -\sqrt{\lambda} + \frac{1}{2} \left(\sqrt{\lambda} + \attr^x(0) -\frac{1}{\kappa} ||g||_{1;[0,\infty)} \right) \\
         &= \frac{1}{2} \left( -\sqrt{\lambda} + \attr^x(0) + \frac{1}{\kappa} ||g||_{1;[0,\infty)}  \right) < \sqrt{\lambda}.
     \end{align*}
     In any case, we have 
     \begin{align*}
         \dot{\attr}^y(t_1) = \lambda + \bigg( \underbrace{\int_{t_1}^\infty g(t\kappa) \dd t -\sqrt{\lambda} + \frac{\varepsilon}{2}}_{\in (-\sqrt{\lambda}, \sqrt{\lambda})}\bigg)^2 > 0 
     \end{align*}
     contradicting \eqref{eqq::proof::smallerinterval}.
     \\\\
     Consider the solution $t\mapsto y_\kappa(t,0,0)$ of \eqref{eqq::y}$_\kappa$ with $y(0)=\int_{-\infty}^0 \alpha(t) \dd t$ and $\kappa$ large enough such that the above holds. By the same argumentation as before and \autoref{thm::casesy}
     \begin{align*}
         y_\kappa(t) \to \sqrt{\lambda} + \int_{-\infty}^0 \alpha(t) \dd t + \int_0^\infty g(t\kappa) \dd t .
     \end{align*}
     In particular, there exists $\Bar{t}$ such that 
     \begin{align*}
        y_\kappa(t) \geq \int_{-\infty}^0 \alpha(t) \dd t + \int_0^\infty g(t\kappa) \dd t  \qquad \forall \  t \geq \Bar{t}.
     \end{align*}
     By the comparison principle Appendix \autoref{thm::comparison}, we have $\attr^y_\kappa(t) \geq y_\kappa(t,0,0)$ for all $t\geq 0$. It follows that 
     \begin{align*}
         \lim_{t\to\infty} \attr^y_\kappa(t) \geq \int_{-\infty}^0 \alpha(s) \dd s + \int_0^\infty g(s\kappa) \dd s .
     \end{align*}
     By \autoref{thm::casesy} $\lim_{t\to\infty} \attr^x_\kappa(t) = \sqrt{\lambda} +\int_\R h(t;\kappa) \dd t$ and \eqref{eqq::y}$_\kappa$ is in Case A. \\\\
     (iii) Assume that \eqref{eqq::y}$_{\kappa_2}$ is in Case C. Since \eqref{eqq::y}$_{\kappa_2 + 1}$ is in Case A by (i), \autoref{thm::casesopen} (iii) shows that there is a $\kappa \in (\kappa_2,\kappa_2+1)$ such that \eqref{eqq::y}$_{\kappa}$ is in Case B. This is a contradiction to the properties of $\kappa_2$ shown in (i).
\end{proof}

\begin{proposition}\label{pro::smallKappa_underCond}
    Let $\lambda>0$ and $\alpha\in\SET(\R)$ such that $\attr^x(0)$ exists. Further, let $g\in\SET(\R^+)$ such that $l(\{g>\lambda\}) > 0$. Then, there exists $\kappa_1 >0$ such that \eqref{eqq::x}$_{\kappa_1}$ and \eqref{eqq::y}$_{\kappa_1}$ are in Case C.
\end{proposition}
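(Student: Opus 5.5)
The plan is to work directly with the $x$-system in the initial value form \eqref{eqq::initalvaluex}. For $t\geq 0$ the attractor $\attr^x_\kappa$ is the solution of $\dot{x}_\kappa = \lambda - g(t\kappa) - x_\kappa^2$ with $x_\kappa(0)=\attr^x(0)=:\mu$, and $\mu$ does not depend on $\kappa$ because $h$ coincides with $\alpha$ on $(-\infty,0)$. By \autoref{thm::casesx}, it suffices to show that $x_\kappa$ reaches $-\infty$ in finite time for some small $\kappa_1$. The conclusion for \eqref{eqq::y} then follows from \autoref{pro::xysamecase}.

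\textbf{Step 1: an interval with net destabilizing forcing.} Since $l(\{g>\lambda\})>0$, I claim there is a bounded interval $[a,b]\subset\R^+$ with
\begin{align*}
    c \coloneqq \int_a^b (g(s)-\lambda)\dd s > 0 .
\end{align*}
Otherwise $\int_a^b (g-\lambda)\,\dd s\leq 0$ for every interval. By the Lebesgue differentiation theorem this would force $g\leq\lambda$ a.e., which contradicts the assumption.

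\textbf{Step 2: a uniform upper bound before time $a/\kappa$.} Put $K\coloneqq \lambda + \|g\|_{\infty;[0,\infty)}$ and $X_0\coloneqq\max\{\mu,\sqrt{K}\}$. Whenever $x>\sqrt{K}$ we have $\dot{x}_\kappa\leq K-x^2<0$. Hence, by comparison (\autoref{thm::comparison}), $x_\kappa(t)\leq X_0$ for all $t\geq 0$ in its interval of existence, uniformly in $\kappa$. If $x_\kappa$ blows up before time $b/\kappa$, we are already in Case C, so assume it exists on $[0,b/\kappa]$.

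\textbf{Step 3: the slow passage through $[a/\kappa,b/\kappa]$.} Integrate the equation and drop the nonpositive term $-\int x^2$. After rescaling $s=t\kappa$ this gives
\begin{align*}
    x_\kappa(b/\kappa) \leq x_\kappa(a/\kappa) + \int_{a/\kappa}^{b/\kappa}(\lambda-g(t\kappa))\dd t \leq X_0 - \frac{c}{\kappa}.
\end{align*}
Choose $\kappa_1$ so small that $X_0 - c/\kappa_1 < -\sqrt{2K}$. This is the key mechanism: a slower traversal of the same forcing profile accumulates a deficit of order $1/\kappa$.

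\textbf{Step 4: finite-time blowup.} For $x<-\sqrt{2K}$ we have $\dot{x}_{\kappa_1}\leq K-x^2\leq -x^2/2$, so the region $x<-\sqrt{2K}$ is forward invariant. Comparison with the solution of $\dot{z}=-z^2/2$ started at $z(b/\kappa_1)=x_{\kappa_1}(b/\kappa_1)<0$ then shows that $x_{\kappa_1}\to-\infty$ in finite time. Hence $\attr^x_{\kappa_1}$ is unbounded, which is Case C by \autoref{thm::casesx}.

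I expect the main subtlety to be Step 1. Because $g$ is only measurable, $\{g>\lambda\}$ need not contain an interval, so the Lebesgue differentiation argument is what replaces a naive pointwise choice. Everything else is a comparison argument that requires no assumption on the sign of $\mu$ beyond the existence of $\attr^x(0)$.
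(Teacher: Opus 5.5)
Your proposal is correct and follows essentially the same route as the paper. Both arguments use a $\kappa$-uniform upper bound on $\attr^x_\kappa$, then a slow passage (small $\kappa$) through a region where $g$ exceeds $\lambda$ to produce a drop of order $1/\kappa$, and finally a threshold below which the trajectory escapes to $-\infty$; the only difference is that the paper picks a Lebesgue density point of $\{g>\lambda+\delta\}$ and bounds $\dot{x}$ separately on the dense part and its complement, whereas you integrate $\dot{x}_\kappa\le\lambda-g(t\kappa)$ over an interval with $\int_a^b(g(s)-\lambda)\,\dd s>0$, which gives the same estimate more directly.
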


\begin{proof}
We argue by \eqref{eqq::x}, the statement for \eqref{eqq::y} follows by \autoref{pro::xysamecase}. 
\\
We first show that if there exists $t\geq 0$ such that $\attr^x(t) < - \sqrt{\lambda + ||g||_{\infty;[0,\infty)}}$, then \eqref{eqq::x} is in Case C. Assume such $t$ exists and let $\delta \coloneqq (- \sqrt{\lambda + ||g||_{\infty;[0,\infty)}} - \attr^x)/2 >0$. Then
\begin{align*}
    \dot{\attr}^x(t) = \lambda - g(t\kappa) - x^2(t)  < \lambda - g(t\kappa) - (\lambda + ||g||_{\infty;[0,\infty)}) - \delta^2 \leq - \delta^2.
 \end{align*}
 Hence,
 \begin{align*}
     \attr^x(s) \leq \attr^x(t) - \delta^2(s-t) \longrightarrow - \infty 
 \end{align*}
as $(s-t) \to \infty$. Since, by \autoref{thm::casesx}, $\attr^x$ is unbounded if and only if it is in Case C this shows the first claim. 
\\
\\
Next, we show that $\attr^x(t) \leq \sqrt{\lambda + ||\alpha||_{\infty;\R} + ||g||_{\infty;[0,\infty)} + 1}$ for all $t\in \R$. To see that assume that there is $t\in\R$ such that $\attr^x = \sqrt{\lambda + ||\alpha||_{\infty;\R} + ||g||_{\infty;[0,\infty)} + 1}$, then
\begin{align*}
    \dot{\attr}^x(t) \leq \lambda + (||\alpha||_{\infty;\R} + ||g||_{\infty;[0,\infty)}) - \sqrt{\lambda + ||\alpha||_{\infty;\R} + ||g||_{\infty;[0,\infty)} + 1}^2 < 0.
\end{align*}
This shows the second claim.
\\
\\
We now show the statement of \autoref{pro::smallKappa_underCond}.  
Assume $l(\{g>\lambda\})>0$, then there is a $\delta>0$ such that $l(\{g>\lambda+\delta\})>0$. To see this, assume that for all $\delta >0$, we have $l(\{g>\lambda+\delta\})=0$. Then,
$$
l(\{g>\lambda\})>0 = l\left(\bigcup_{n\in\N} \left\{g>\lambda + \frac{1}{n}\right\}\right) \leq \sum_{n\in\N} l(\{g>\lambda+\frac{1}{n}\})=0
$$
gives a contradiction. \\
Hence, there is $\delta>0$ and a set $\mathcal{A}$ of positive measure such that $g > \lambda + \delta$ on $\mathcal{A}$. By the Lebesgue density theorem \cite[Thm.~21.29]{secondCourseOnRealFunctions}, there exists $\Bar{t} \in \mathcal{A}$ such that
$$
\frac{l(\mathcal{A} \cap B_\varepsilon(\Bar{t}))}{l(B_\varepsilon(\Bar{t}) } \overset{\varepsilon\to 0}{\longrightarrow} 1.
$$
In particular, $\forall \  \nu>0$ there is an $\varepsilon>0$ such that 
$$
\frac{l(\mathcal{A} \cap B_\varepsilon(\Bar{t}))}{l(B_\varepsilon(\Bar{t})) } \geq 1 - \nu \Longleftrightarrow l(\mathcal{A} \cap B_\varepsilon(\Bar{t})) \geq 2 \varepsilon(1-\nu).
$$
Define $A \coloneqq \mathcal{A}\cap B_\varepsilon (\Bar{t})$, $B \coloneqq B_\varepsilon(\Bar{t}) \backslash \mathcal{A}$ and note that $l(B) \leq 2\varepsilon\nu$. Similarly, by the scaling properties of Lebesgue measure, we obtain for $\kappa>0$ and by considering $g(\cdot \kappa)$ instead of $g(\cdot)$ that
\begin{align*}
    & l(A^\kappa) = l(\mathcal{A}^\kappa \cap B_{\varepsilon/\kappa}(\Bar{t})) \geq 2\frac{\varepsilon}{\kappa} (1-\nu) \\
    & l(B^\kappa) \leq 2\frac{\varepsilon}{\kappa}\nu.
\end{align*}
Further,
\begin{align*}
    & \dot{x}_\kappa = \lambda - g(t\kappa) - x^2_\kappa \leq -\delta \qquad &&\text{on } A^\kappa \\
    & \dot{x}_\kappa = \lambda - g(t\kappa) - x^2_\kappa \leq \lambda + ||g||_{\infty;[0,\infty)} \qquad &&\text{on } B^\kappa. 
\end{align*}
Hence
\begin{align*}
    &\attr^x_\kappa(t+\frac{\varepsilon}{\kappa}) = x(\Bar{t} + \frac{\varepsilon}{\kappa}, \Bar{t}-\frac{\varepsilon}{\kappa}, a(\Bar{t} - \frac{\varepsilon}{\kappa})) 
    \leq x(\Bar{t} + \frac{\varepsilon}{\kappa}, \Bar{t}-\frac{\varepsilon}{\kappa}, \sqrt{\lambda + ||\alpha||_{\infty;\R} + ||g||_{\infty;[0,\infty)} + 1}) \\ 
    &\leq \sqrt{\lambda + ||\alpha||_{\infty;\R} + ||g||_{\infty;[0,\infty)} + 1} + -\delta \frac{2\varepsilon}{\kappa} (1-\nu) + (\lambda + ||g||_{\infty;[0,\infty)})\frac{2\varepsilon}{\kappa}\nu  \\
    &= \sqrt{\lambda + ||\alpha||_{\infty;\R} + ||g||_{\infty;[0,\infty)} + 1} + 2\frac{\varepsilon}{\kappa} \big[(\lambda + ||g||_{\infty;[0,\infty)} + \delta) \nu - \delta \big],
\end{align*}
where upon choosing $\nu \coloneqq \frac{\delta}{2(\lambda + ||g||_{\infty ;[0\infty)}+ \delta)}> 0$, the inner bracket becomes negative and therefore for small enough $\kappa>0$, we obtain
$$
x(\Bar{t} + \varepsilon, \Bar{t}-\varepsilon, a(\Bar{t} - \varepsilon)) < -\sqrt{\lambda + ||g||_{\infty;[0,\infty)}},
$$
i.e.~$\attr^x$ diverges to $-\infty$ and therefore, $\eqref{eqq::x}$ is in Case C.
\end{proof}

The following is a straightforward application of \cite[Thm.~2.11]{PiecewiseUniformlyContinuous24} 
\begin{theorem}\label{thm::lambdastar}
    Let $q,p$ be bounded and piecewise uniformly continuous and consider the equation
    \begin{align}\label{eqq::thmlambda}
        \dot{x} = \lambda + p(t) + q(t) x - x^2.
    \end{align}
    Then, there exists a unique $\lambda^*=\lambda^*(q,p): C_{bu}(\R) \times C_{bu} (\R) \to \R$ such that:
    \begin{itemize}
        \item If $\lambda > \lambda^*$, \eqref{eqq::thmlambda} is in Case A;
        \item If $\lambda = \lambda^*$, \eqref{eqq::thmlambda} is in Case B;
        \item If $\lambda < \lambda^*$, \eqref{eqq::thmlambda} is in Case C.
    \end{itemize}

Moreover, let $\Bar{q},\Bar{p}$ be bounded and uniformly continuous, then there exists a constant $m$ such that
\begin{align*}
    | \lambda^*(q,p) - \lambda^*(\Bar{q}, \Bar{p})| \leq m (||\Bar{q} - q||_{\infty;\R} + ||\Bar{p}-p||_{\infty;\R}).
\end{align*}
In particular, $\lambda^*$ is continuous in the $L^\infty$ topology.
\end{theorem}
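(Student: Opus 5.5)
The plan is to reduce the statement to \cite[Thm.~2.11]{PiecewiseUniformlyContinuous24} by checking its hypotheses, and, where the reference is stated for a slightly different normal form, to recover the trichotomy and the Lipschitz bound from monotonicity and comparison arguments. First I verify that $f_\lambda(t,x)=\lambda+p(t)+q(t)x-x^2$ satisfies \autoref{cond::1} and is coercive and strictly concave. This goes exactly as in \autoref{app::prop::cond_allsystems}: $p,q$ are bounded, and $\partial_x^2 f_\lambda=-2$. Consequently the dichotomy of \autoref{thm::casesx} (i)--(v) and the classification into Cases A, B, C hold for \eqref{eqq::thmlambda}, with the limits replaced by those of the frozen equation. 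I expect the piecewise uniform continuity of $p,q$ to be precisely the regularity under which \cite{PiecewiseUniformlyContinuous24} proves the trichotomy for this family.

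Next I construct $\lambda^*$ through monotonicity in $\lambda$. Set $\Lambda_b:=\{\lambda:\eqref{eqq::thmlambda}_\lambda \text{ has a bounded solution}\}$. The key observation is that if $b$ is bounded for $\lambda_1$ and $\lambda_2>\lambda_1$, then $b$ is a strict subsolution for $\lambda_2$. By the comparison principle \autoref{thm::comparison}, $\attr_{\lambda_2}\ge b$ and $\rep_{\lambda_2}\le b$, so $\lambda_2\in\Lambda_b$. Moreover, the strict inequality $\dot b<f_{\lambda_2}(t,b)$, with a uniform gap $\lambda_2-\lambda_1$, forces $\attr_{\lambda_2}>b>\rep_{\lambda_2}$ with a separation bounded away from zero. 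Two bounded solutions that are uniformly separated cannot occur in Case B, so $\lambda_2$ is in Case A. Hence $\Lambda_b$ is an up-interval on which Case B occurs at most at its infimum. Nonemptiness and boundedness from below come from explicit estimates:
\begin{itemize}
\item For $\lambda$ large, the interval $[R,R+1]$ with $R^2>\|p\|_\infty+\|q\|_\infty(R+1)$ is not enough on its own, so instead I take constants $\pm c$ that are respectively a supersolution and a subsolution. This shows $\lambda\in\Lambda_b$.
\item For $\lambda<-\|p\|_\infty-\|q\|_\infty^2/4$, one has $\dot x\le -\delta$ for all $x$, so every solution blows up and the system is in Case C.
\end{itemize}
I then define $\lambda^*:=\inf\Lambda_b$. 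Case A is open by \autoref{pro::robustnesshyperbolic} (applied to the shift $\lambda\mapsto\lambda-\varepsilon$). Case C is open by continuous dependence of the finite-time blow-up on $\lambda$. Together these show that $\lambda^*$ itself is in Case B, which gives the trichotomy and uniqueness.

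For the Lipschitz estimate, the plan is to compare $\lambda^*(q,p)$ and $\lambda^*(\bar q,\bar p)$ directly. Take $\lambda>\lambda^*(q,p)$ and let $b$ be a bounded solution for $(q,p,\lambda)$, with an a priori bound $|b|\le R$. Then
\[
\dot b = \lambda+\bar p+\bar q b-b^2-\big[(\bar p-p)+(\bar q-q)b\big].
\]
Hence $b$ is a subsolution of the barred equation with parameter $\lambda+(1+R)(\|\bar q-q\|_\infty+\|\bar p-p\|_\infty)$. This yields $\lambda^*(\bar q,\bar p)\le\lambda^*(q,p)+m(\cdots)$ with $m=1+R$; the reverse inequality follows by symmetry. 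Continuity in the $L^\infty$ topology is then immediate.

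The main obstacle is making $R$ uniform: the bounded solutions near $\lambda^*$ must be bounded independently of $\lambda$ in a neighbourhood and of the perturbation. I would obtain this from the invariant-region argument used in \autoref{pro::smallKappa_underCond}, which shows that any bounded solution lies in $[-c,c]$ with $c=\sqrt{|\lambda|+\|p\|_\infty+\|q\|_\infty^2+1}+\|q\|_\infty$. Everything else is essentially the statement of \cite[Thm.~2.11]{PiecewiseUniformlyContinuous24}.
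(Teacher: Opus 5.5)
Your argument is essentially sound, but it takes a genuinely different route from the paper. The paper gives no proof beyond calling the statement a direct application of \cite[Thm.~2.11]{PiecewiseUniformlyContinuous24}; that theorem already contains both the trichotomy in $\lambda$ and the Lipschitz estimate for coefficients in that class.

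You instead rebuild the result from the concave--coercive trichotomy of \cite{Longo_2024CriticalTransitionsForCaratheodory} using elementary tools:
\begin{itemize}
\item monotonicity of $\Lambda_b$ via strict subsolutions;
\item the explicit bounds $-\|p\|_\infty-\|q\|_\infty^2/4\le\lambda^*\le\|p\|_\infty$;
\item an a priori bound $c$ on all bounded solutions;
\item the observation that a bounded solution for $(q,p,\lambda)$ is a subsolution for $(\bar q,\bar p,\lambda+(1+R)\Delta)$, where $\Delta=\|\bar q-q\|_\infty+\|\bar p-p\|_\infty$.
\end{itemize}
This never uses piecewise uniform continuity, so bounded measurable $p,q$ suffice. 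It also gives an explicit $m=1+\max(R,\bar R)$ that depends only on the sup norms of the coefficients and is therefore locally uniform. That local uniformity is exactly what the continuity of $\lambda^*$ in \autoref{thm::casesopen} needs. The citation is shorter but hides all of this inside the reference's hypotheses.

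Several steps need repair or more care.
\begin{itemize}
\item The symmetric constants $\pm c$ cannot in general be a super- and a subsolution. The conditions $f_\lambda(t,c)\le 0\le f_\lambda(t,-c)$ for all $t$ force $\sup_t(p+qc)\le\inf_t(p-qc)$, which fails, e.g., for $q\equiv0$ and nonconstant $p$. Use instead the subsolution $0$ (valid for $\lambda\ge\|p\|_\infty$) together with a large constant supersolution $c_+$. Then take the monotone pullback limit of $x(t,-n,c_+)$ to get a bounded solution.
\item In the Lipschitz step, $b$ is only a subsolution of the barred equation, not a solution of some member of the family. You should state that your argument $\rep\le b\le\attr$ (forward and backward comparison) uses only the subsolution property.
\item ``Case C is open by continuous dependence of the blow-up'' is not immediate. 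A solution started at the a priori bound at a generic time can lie above the repeller and be bounded forward. It is cleaner to prove that $\Lambda_b$ is closed: for $\lambda_n\downarrow\lambda^*$ the attractors $\attr_{\lambda_n}$ decrease and stay in $[-c,c]$. Their pointwise limit is then a bounded solution at $\lambda^*$ by dominated convergence.
\item For general bounded $p,q$ there are no limits at $\pm\infty$. Cases A--C must therefore be read as ``two hyperbolic / one non-hyperbolic bounded / no bounded solution''; your phrase about ``limits of the frozen equation'' is not meaningful here.
\item \autoref{pro::robustnesshyperbolic} as stated covers only right-hand sides $\alpha(t)+p(x)$. You need the general \cite[Prop.~3.3]{Longo_2024CriticalTransitionsForCaratheodory}, which applies because shifting $\lambda$ leaves $f_x$ unchanged.
\end{itemize}
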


\autoref{thm::lambdastar} applies in particular to the $y$-system \eqref{eqq::y} with $q=2H_\kappa$ and $p=-H_\kappa^2$, since
\begin{align*}
        \Dot{y}(t) = \lambda - \left( y(t) - H_\kappa(t)) \dd s\right)^2 = \lambda + 2 H_\kappa(t) - H_\kappa(t)^2 - y^2
\end{align*}
and $H_\kappa$ is bounded and uniformly continuous for all $\kappa>0$.

\vspace{0.75cm}

\begin{proof}[Proof of \autoref{lem::propH}] \label{app::proof:propH}
(i) Boundedness readily follows from $h(t;\kappa) \in L^1$ for any $\kappa>0$. Further, uniform continuity follows from the fact that $H_\kappa$ (resp $H_\kappa^2$) is differentiable with
\begin{align*}
    & |H_\kappa'(t)| = \left| \bigg( \int_{-\infty}^t h(t;\kappa) \bigg)' \right| = |h(t;\kappa)| \leq ||h||_{\infty;\R} \\
    & |H_\kappa'(t)| = \left| 2\bigg(\int_{-\infty}^t h(t;\kappa) \bigg) h(t;\kappa) \right| \leq  2||h||_1 ||h||_{\infty;\R}.
\end{align*}
Hence, $H_\kappa$ (resp. $H_\kappa^2$) are differentiable on $\R$ with bounded derivative, therefore uniformly continuous. 
\\\\
(ii) To shorten the notation, we write $H_n$ ($H_{n_k}$) for $H_{\kappa_n}$ ($H_{\kappa_{n_k}}$) and $H$ for $H_\kappa$. First note that the sequence $(H_n)_{n\in\N}$ is uniformly bounded and equicontinuous since:
\begin{align*}
|H_n(t)| \leq ||\alpha||_{1;\R} + \frac{1}{\kappa_n} ||g||_{1;[0,\infty)} \leq ||\alpha||_{1;\R} +\frac{1}{a} ||h||_{1;\R} 
\end{align*}
and
\begin{align*}   
|H_n(t) - H_n(s)| = \left\{\begin{array}{cccc}
    &\left| \int_s^t \alpha(l) \dd l \right| \leq |t-s|||\alpha||_{\infty;\R} &&  s,t <0\\ 
    &\left|\frac{1}{\kappa_n} \int_{s\kappa_n}^{t\kappa_n} g(l) \dd l \right| \leq |t-s| ||g||_{\infty;[0,\infty)} && s,t \geq 0 \\
    &\left| \int_s^0 \alpha(l) \dd l + \frac{1}{\kappa_n} \int_{0}^{t\kappa_n} g(l) \dd l \right| \leq |t-s| \max({||\alpha||_{\infty;\R}, ||g||_{\infty;[0,\infty)}}) && s<0<t \\
     &\left| \int_t^0 \alpha(l) \dd l + \frac{1}{\kappa_n} \int_{0}^{s\kappa_n} g(l) \dd l \right| \leq |t-s| \max({||\alpha||_{\infty;\R}, ||g||_{\infty;[0,\infty)}}) && t<0<s.
     \end{array}\right.
\end{align*}
Now let $\varepsilon>0$, we want to show that there exists $k_0$ and a subsequence $n_k$ such that for all $k\geq k_0$ 
$$
|H_{n_k}(t) -H(t)| = \left| \int_{-\infty}^{t} h(s;\kappa) \dd s - \int_{-\infty}^{t}h(s;\kappa_{n_k}) \dd s\right|< \varepsilon \qquad \forall \  t\in\R.
$$
For $t\in(-\infty,0)$, the assertion is trivial, since
$$
    |H_{n}(t) -H(t)| = \left|\int_{-\infty}^t \alpha(s) \dd s - \int_{-\infty}^t \alpha(s) \dd s  \right| = 0.
$$
Hence, in the following, we consider $t\geq 0$ and therefore 
$$
    |H_{n}(t) -H(t)| = \left| \frac{1}{\kappa_{n}}\int_{0}^{t\kappa_{n}} g(s) \dd s - \frac{1}{\kappa}\int_{0}^{t\kappa} g(s) \dd s  \right|.
$$

Since $g\in L^1$ and $\kappa_n,\kappa \subset [a,b]$ there exists $T>0$ such that $|ta| \geq \Bar{t}>0$ for all $t> T$ and 
$$
\frac{1}{\kappa}\int_{\Bar{t}}^\infty |g(t)| \dd t < \frac{1}{a}\int_{\Bar{t}}^\infty |g(t)| \dd t < \frac{\varepsilon} {2} .
$$
Let $t \in (T,\infty)$ and $n_1 \in \N$ such that for all $n\geq n_1$, we have $|\kappa -\kappa_n |  \frac{2||g||_{1;[0,\infty)}}{a^2} < \varepsilon/2$. Then,
\begin{align*}
    |H_n(t) -H(t)| &= \left|\frac{1}{\kappa}\int_{0}^{t\kappa} g(s) \dd s - \frac{1}{\kappa_n}\int_{0}^{t\kappa_n} g(s) \dd s\right| \\
    &= \left|\frac{1}{\kappa}\int_{0}^{t\kappa} g(s) \dd s - \left(\frac{1}{\kappa} + \frac{\kappa -\kappa_n}{\kappa \kappa_n} \right)\left(\int_{0}^{t\kappa} g(s) \dd s + \int_{t\kappa}^{t\kappa_n} g(s) \dd s \right) \right| \\
    &\leq \frac{1}{\kappa} \left|\int_{t\kappa}^{t\kappa_n}g(s) \dd s\right| + \frac{\left|\kappa -\kappa_n\right|}{\kappa \kappa_n} \left|\left(\int_{0}^{t\kappa} g(s) \dd s + \int_{t\kappa}^{t\kappa_n} g(s) \dd s \right)\right| \\
    &\leq |\kappa -\kappa_n |  \frac{2||g||_{1;[0,\infty)}}{a^2} + \frac{1}{\kappa}
    \left|\int_{t\kappa}^{t\kappa_n} g(s) \dd s\right| \\
    & \leq |\kappa -\kappa_n |  \frac{2||g||_{1;[0,\infty)}}{a^2} + \frac{1}{\kappa} \int_{\Bar{t}}^\infty |g(s)| \dd s < |\kappa -\kappa_n |  \frac{2||g||_{1;[0,\infty)}}{a^2} + \frac{\varepsilon}{2} < \varepsilon.
\end{align*}
For $t\in [0,T]$ recall that $H_n$ is equicontinuous and uniformly bounded, hence, by Arzel{\`a}-Ascoli, there exists a subsequence $n_k$ such that $H_{n_k} \to H$ uniformly on $[0,T]$, i.e.~there exists $k_2$ such that for all $k\geq k_2$, we have
$$
    ||H_{n_k} - H||_{\infty; [0,T]} < \varepsilon.
$$
By changing to a subsequence for $t>T$ and replacing $n_1$ by $k_1$, we find that there exists a subsequence $n_k$ such that for all $k\geq \max\{k_1,k_2\}$, we have
$$
||H_{n_k} - H||_{\infty;\R} < \varepsilon,
$$
i.e.~$H_{n_k} \to H$ uniformly on $\R$. The statement for $H_\kappa^2$ follows from the fact that $H_\kappa$ is bounded by $1/a ||g||_1 + ||\alpha||_1$ for $\kappa \in [a,b]$ and therefore
$$
||H_{n_k}^2 - H_\kappa^2 ||_{\infty;\R} \leq ||H_{n_k} + H_\kappa ||_{\infty;\R} \ ||H_{n_k} - H_\kappa  ||_{\infty;\R} \to 0.
$$
\end{proof}

The following is a slightly adapted version of \cite[Thm.~3.3]{ode_levi_2017}.
\begin{theorem}\label{thm::comparison}
    Let $f,g : \R^+ \times \R \to \R$ satisfy \autoref{cond::1}. Consider the two ODEs
    \begin{align*}
        & \dot{x} = f(t,x) \qquad x(0) = x_0\\
        & \dot{y} = g(t,y) \qquad y(0) = y_0.
    \end{align*}
    Assume $f(t,x) \leq g(t,x)$ for all $x\in\R$ and $x_0 < y_0$. Then $x(t) < y(t)$ for all $t\geq 0$ in the maximal interval of existence for both solutions.
\end{theorem}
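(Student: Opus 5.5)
The comparison principle (\autoref{thm::comparison}) will follow from a first-crossing argument on the difference $w \coloneqq y - x$, using only the local Lipschitz bound from \autoref{cond::1}. Let $J$ denote the common maximal interval of existence of both solutions, with $0 \in J$. Both solutions are absolutely continuous on compact subintervals of $J$ (Carathéodory solutions). Hence $w$ is absolutely continuous and $w(0) = y_0 - x_0 > 0$.

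Suppose the claim fails. By continuity, the set $\{t\in J, t\geq 0 : w(t)\le 0\}$ is then nonempty and closed in $J$, so it has a smallest element $t^*>0$. At this point $w(t^*)=0$ and $w>0$ on $[0,t^*)$. On the compact interval $[0,t^*]$ both $x$ and $y$ are bounded, so they take values in $[-j,j]$ for some $j\in\N$. By \autoref{cond::1}(iv) there is then $l_j$ such that, $l$-a.e. on $[0,t^*]$,
\begin{align*}
\dot w(t) = g(t,y(t)) - f(t,x(t)) \ge g(t,x(t)) - f(t,x(t)) + g(t,y(t)) - g(t,x(t)) \ge -l_j\,|w(t)| .
\end{align*}
Here the first inequality inserts $\pm g(t,x(t))$, and the second uses $f\le g$ together with the Lipschitz bound for $g$. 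On $[0,t^*]$ we have $w \geq 0$, so this reads $\dot w \ge -l_j w$ almost everywhere.

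Next, multiply by the integrating factor. The function $e^{l_j t}w(t)$ is absolutely continuous and has a.e. nonnegative derivative, so it is nondecreasing. This gives $w(t^*) \ge e^{-l_j t^*} w(0) > 0$, which contradicts $w(t^*)=0$. Therefore $x(t)<y(t)$ for all $t\ge0$ in $J$.

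The only delicate point is the low regularity in $t$. Since $f,g$ are merely measurable in $t$, the differential inequality holds only almost everywhere, so one cannot argue with a pointwise derivative at $t^*$. This is why I work with the integrated Gronwall form (monotonicity of the absolutely continuous function $e^{l_jt}w$) rather than comparing derivatives at the crossing time. A second point is that the Lipschitz bound in \autoref{cond::1}(iv) is only local. This is handled by restricting to the compact interval $[0,t^*]$, where both solutions are bounded by maximality of the existence interval.
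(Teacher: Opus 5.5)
Your proof is correct, and it takes a different route from the paper.

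The paper argues by strict perturbation. It introduces the solution $y_1^\varepsilon$ of $\dot y = g(t,y)+\varepsilon$ started at the midpoint $(x_0+y_0)/2$. It rules out a first crossing of $x$ and $y_1^\varepsilon$ by comparing derivatives at the crossing time $t_1$, i.e.\ $\dot x(t_1)=f(t_1,x(t_1))<g(t_1,y_1^\varepsilon(t_1))+\varepsilon=\dot y_1^\varepsilon(t_1)$. It then lets $\varepsilon\to 0$ via Gronwall to obtain $x\le y_1$ for the unperturbed solution $y_1$ from the midpoint. Only at the end does it recover strictness from uniqueness, $y_1<y$.

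You instead work directly with $w=y-x$. The local Lipschitz bound for $g$ turns $f\le g$ into the linear differential inequality $\dot w\ge -l_j w$ on $[0,t^*]$. Monotonicity of the absolutely continuous function $e^{l_j t}w$ then gives the strict inequality in one step, with no auxiliary equation, no limiting procedure and no appeal to uniqueness.

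Your version gains two further things. First, it is rigorous for right-hand sides that are merely measurable in $t$; in the paper's crossing step $\dot x(t_1)$ need not exist, so the pointwise comparison has to be replaced by an integrated estimate of the kind you use. Second, it gives the quantitative separation $y(t)-x(t)\ge e^{-l_j t}(y_0-x_0)$ on every compact interval on which both solutions stay in $[-j,j]$.

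The perturbation route is the classical one that survives without Lipschitz bounds (e.g.\ comparison with maximal solutions when uniqueness fails). Here, though, the paper needs the Lipschitz bound anyway for its Gronwall step and for uniqueness, so it gains nothing over your argument.
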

\begin{proof}
    Let $\varepsilon>0$ and $y_1, y_1^\varepsilon$ be the solution to 
    \begin{align*}
    & \dot{y_1} = g(t,y_1) \qquad y_1^\varepsilon(0) = x_0 + \frac{y_0-x_0}{2} \\
    & \dot{y_1^\varepsilon} = g(t,y_1^\varepsilon) + \varepsilon \qquad y_1^\varepsilon(0) = x_0 + \frac{y_0-x_0}{2}.
    \end{align*}
    Since $f,g,g+\varepsilon$ all satisfy \autoref{cond::1} there exist unique solutions to all here considered ODEs (\cite{odetheory}). \\
    Assume that there exists $\Bar{t}$ such that $x(\Bar{t}) < y_1^\varepsilon(\Bar{t})$. By the continuity of $x,y_1^\varepsilon$ there exists $t_1\geq 0$ such that $x(t_1) = y_1^\varepsilon(t_1)$ and $x(t) < y_1^\varepsilon(t)$ for all $t\in(t_1,\Bar{t})$. However, this yields a contradiction as $\dot{x}(t_1) = f(t_1,x) < g(t_1,y^\varepsilon_1) + \varepsilon = \dot{y_1^\varepsilon}$. Hence, $x\leq y_1^\varepsilon$ for all $\varepsilon > 0$. \\
    Let $T>0$ and consider $z \coloneqq y_1^\varepsilon - y_1$, then 
    \begin{align*}
        |z'(t) | = |g(t, y_1^\varepsilon) + \varepsilon- g(t, y_1)| \leq \varepsilon + ||g'||_{_{\infty;[0.T]}} |z|.
    \end{align*}
    By Gronwall's inequality we have $|z| \leq \varepsilon e^{||g'||_{_{\infty;[0.T]}} T}$. And hence $y_1^\varepsilon \to y_1$ uniformly on compact intervals as $\varepsilon\to 0$. In particular, $y_1^\varepsilon \to y_1 $ pointwise and hence $y_1 \geq x$. Finally, note that by uniqueness of solutions $y_1 < y$ and therefore $x < y$. 
\end{proof}

\vspace{0.75cm}

\begin{proof}[Proof of \autoref{lem::propertieslossfunc}]
    (ii) Let $I=[\kappa_-,\kappa_+) \subset \R^+$ and note that for $\kappa_1,\kappa_2 \in I$
    \begin{align}\label{eqq::proof::normlowerbound}
    \begin{split}
    || g(\cdot \kappa_1) - g(\cdot \kappa_2)||_{2;[0,\infty)} &= || \int_{\kappa_1}^{\kappa_2} \cdot g'(\cdot \kappa) \dd \kappa ||_{2;[0,\infty)} \leq \int_{\kappa_1}^{\kappa_2} ||\cdot g'(\cdot\kappa) ||_{2;[0,\infty)}   \dd \kappa \\
    &\leq |\kappa_2 - \kappa_1| \sup_{\kappa \in [\kappa_-,\kappa_+)} ||\cdot g'(\cdot\kappa)||_{2;[0,\infty)} \\
    & =|\kappa_2 - \kappa_1|\sup_{\kappa \in [\kappa_-,\kappa_+)} \frac{1}{\kappa^{3/2}} ||\cdot g'(\cdot)||_{2;[0,\infty)} = |\kappa_2 - \kappa_1| \frac{1}{\kappa_-^{3/2}} ||\cdot g'(\cdot)||_{2;[0,\infty)},
    \end{split}
    \end{align}
    where we used the Minkowski integral inequality for the first inequality.
Let $u = \alpha - g(\cdot \kappa_1)$ and $v = \alpha - g(\cdot \kappa_2)$, then 
\begin{align}\label{eqq::proof::normdecomp}\begin{split}
    \bigg|||\alpha - g(\cdot \kappa_1)||_{2;[0,\infty)}^2 &- ||\alpha - g(\cdot \kappa_2)||_{2;[0,\infty)}^2 \bigg| 
    = | \langle u,u\rangle - \langle v,v\rangle | 
    = | \langle u,u\rangle - \langle u,v \rangle + \langle u,v\rangle -\langle
    v,v\rangle | \\
    & = |\langle u+v, u-v \rangle| 
    \leq ||u+v||_{2;[0,\infty)} ||u-v||_{2;[0,\infty)} \\
    & \leq \bigg(||\alpha - g(\cdot \kappa_1)||_{2;[0,\infty)} + ||\alpha - g(\cdot \kappa_2)||_{2;[0,\infty)} \bigg) ||g(\cdot \kappa_1) - g(\cdot \kappa_2)||_{2;[0,\infty)},
    \end{split}
\end{align}
where we used Cauchy Schwartz for the first inequality. Naturally for $i=1,2$
\begin{align*}
    ||g(\cdot \kappa_i)||_{2;[0,\infty)} = \frac{1}{\sqrt{\kappa_i}} ||g||_{2;[0,\infty)} \leq \frac{1}{\sqrt{\kappa_-}} ||g||_{2;[0,\infty)}.
\end{align*}

Hence, \eqref{eqq::proof::normlowerbound} and \eqref{eqq::proof::normdecomp} yield
\begin{align*}
   \bigg|||\alpha - g(\cdot \kappa_1)||_{2;[0,\infty)}^2 &- ||\alpha - g(\cdot \kappa_2)||_{2;[0,\infty)}^2 \bigg| \\ 
   &\leq  \bigg(||\alpha - g(\cdot \kappa_1)||_{2;[0,\infty)} + ||\alpha - g(\cdot \kappa_2)||_{2;[0,\infty)} \bigg) ||g(\cdot \kappa_1) - g(\cdot \kappa_2)||_{2;[0,\infty)} \\
   &\leq \bigg(2||\alpha||_{2;[0,\infty)} + 2 \frac{1}{\sqrt{\kappa_-}} ||g||_{2;[0,\infty)} \bigg) ||g(\cdot \kappa_1) - g(\cdot \kappa_2)||_{2;[0,\infty)} \\
   & \leq  \bigg(2||\alpha||_{2;[0,\infty)} + 2 \frac{1}{\sqrt{\kappa_-}} ||g||_{2;[0,\infty)} \bigg) \frac{1}{\kappa_-^{3/2}} ||\cdot g'(\cdot)||_{2;[0,\infty)} |\kappa_2 - \kappa_1| \\
   & = L |\kappa_2 - \kappa_1|
\end{align*}
for $L \coloneqq \bigg(2||\alpha||_{2;[0,\infty)} + 2 \frac{1}{\sqrt{\kappa_-}} ||g||_{2;[0,\infty)} \bigg) \frac{1}{\kappa_-^{3/2}} ||\cdot g'(\cdot)||_{2;[0,\infty)} $.
\\\\
(i) Let $\kappa_n \to \kappa$, then there exists $n_0\in \N $ and a compact interval $I \subset \R^+$ such that $\kappa_n \in I$ for all $n\geq n_0$. By (ii) the claim holds for all $f \in C^\infty_c$ the set smooth functions with compact support as $f$ is naturally of bounded variation with $tf'\in L^2$. Since $C^\infty_c$ is dense in $L^2$, we may take a function $g_k \in C^\infty_c$ with $g_k \to g$ in $L^2$. Then
\begin{align*}
    &\bigg| ||\alpha - g(\cdot \kappa_n)||_{2;[0,\infty)} - ||\alpha - g(\cdot \kappa)||_{2;[0,\infty)}\bigg|
    \leq \bigg| ||\alpha - g(\cdot \kappa_n)||_{2;[0,\infty)} - ||\alpha - g_k(\cdot \kappa_n)||_{2;[0,\infty)}\bigg| \\
    & \qquad \qquad + \bigg| ||\alpha - g_k(\cdot \kappa_n)||_{2;[0,\infty)} - ||\alpha - g_k(\cdot \kappa)||_{2;[0,\infty)}\bigg| 
    +\bigg| ||\alpha - g_k(\cdot \kappa)||_{2;[0,\infty)} - ||\alpha - g(\cdot \kappa)||_{2;[0,\infty)}\bigg| \\
    &\leq || g(\cdot \kappa_n) - g_k(\cdot \kappa_n)||_{2;[0,\infty)}
    + || g_k(\cdot \kappa_n) - g_k(\cdot \kappa)||_{2;[0,\infty)}
    + || g_k(\cdot \kappa) - g(\cdot \kappa)||_{2;[0,\infty)}\longrightarrow 0 \qquad n,k \to \infty,
\end{align*}
which proves the claim.
\end{proof}

\begin{lemma}\label{lem::integral1nu}
    Let $\lambda>0$ and $\alpha \in \SET(\R)$ with $\alpha \geq 0$. Consider the equation
    \begin{align}\label{eqq::proof::nu_statement}
        \dot{x}(t) = \lambda - \alpha(t) - x^2(t)
    \end{align}
    and assume that $\alpha$ is such that $\attr^x(0) >-\sqrt{\lambda}$. Moreover, let $m_{\lambda, \alpha} = \sqrt{\lambda} + \attr^x(0)$ and assume that \eqref{eqq::proof::nu_statement} is in Case C (i.e.~$\attr^x$ diverges to $-\infty$ cf.~\autoref{thm::casesx}). Then there exists $\nu \geq0$ such that 
    \begin{align*}
        \int_0^\infty  \big(\alpha(sm_{\lambda, \alpha}) - \frac{\nu}{2}\big)^+ \dd s = 1.
    \end{align*}
\end{lemma}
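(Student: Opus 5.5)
Write $m \coloneqq m_{\lambda,\alpha}>0$ and consider
\[
F(\nu) \coloneqq \int_0^\infty \big(\alpha(sm) - \tfrac{\nu}{2}\big)^+ \dd s , \qquad \nu\ge 0 .
\]
I will treat this as an intermediate value problem: show that $F$ is continuous and nonincreasing on $[0,\infty)$, that $F(\nu)=0$ for large $\nu$, and that $F(0)\ge 1$. The existence of $\nu$ with $F(\nu)=1$ then follows from the intermediate value theorem.

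The first three properties are routine. Since $\alpha\ge 0$ and $\alpha\in\SET(\R)$, we have $F(0)=\frac1m\int_0^\infty\alpha(t)\dd t<\infty$. The map $\nu\mapsto(\alpha(sm)-\nu/2)^+$ is pointwise nonincreasing and $\tfrac12$-Lipschitz in $\nu$, and it is dominated by $\alpha(sm)\in L^1$. Dominated convergence therefore gives continuity of $F$, and monotonicity is immediate. For $\nu> 2\|\alpha\|_{\infty;\R}$ the integrand vanishes a.e., so $F(\nu)=0$.

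The main obstacle is the bound $F(0)\ge 1$, i.e.\ $\int_0^\infty \alpha(t)\dd t \ge m=\sqrt\lambda+\attr^x(0)$. Here I will use the Case C hypothesis together with the contrapositive of \autoref{thm::largekappa} (ii), taking $g:=\alpha|_{[0,\infty)}\ge 0$ and $\kappa=1$, so that $h(\cdot;1)=\alpha$.
\begin{itemize}
\item If $\attr^x(0)\ge\sqrt\lambda$, I first use that $\attr^x(t)\le\sqrt\lambda$ for all $t$ whenever $\alpha\ge0$: one has $\dot x\le\lambda-x^2$, and $\attr^x\to\sqrt\lambda$ at $-\infty$, so a comparison argument with \autoref{thm::comparison} applies. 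If this fails for some reason, the argument below handles the boundary case anyway, because the moving-equilibria argument in the proof of \autoref{thm::largekappa} (ii) only needs $\attr^y(0)$ to lie strictly above the unstable moving equilibrium.
\item If $\attr^x(0)\in(-\sqrt\lambda,\sqrt\lambda)$, I argue by contradiction. Suppose $\|g\|_{1;[0,\infty)}<m$. Then $\kappa=1>\|g\|_1/m$, and the proof of \autoref{thm::largekappa} (ii) (which shows Case A for every $\kappa>\|g\|_1/m$) shows that the system with $\kappa=1$ is in Case A, contradicting Case C.
\end{itemize}
Hence $\int_0^\infty\alpha\ge m$, so $F(0)\ge1$. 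Together with continuity, monotonicity and $F(\nu)=0$ for large $\nu$, the intermediate value theorem yields $\nu\ge0$ with $F(\nu)=1$.

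The only delicate point is the equality case $\|g\|_1=m$, where \autoref{thm::largekappa} (ii) gives no conclusion. In that case $F(0)=1$ directly, so $\nu=0$ works.
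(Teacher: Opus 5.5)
Your proof is correct and follows the same route as the paper. Both apply the intermediate value theorem to $F$, and both obtain $F(0)\ge 1$ by contradiction from \autoref{thm::largekappa}~(ii) at $\kappa=1$ with $g=\alpha|_{[0,\infty)}$, so that the system is exactly the uncontrolled one.

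You also check details the paper uses implicitly: continuity of $F$ via dominated convergence, and that $\alpha\ge 0$ forces $\attr^x(0)\le\sqrt{\lambda}$. The latter is most cleanly seen by backward blow-up, since $\attr^x$ must exist on a negative half-line. The boundary case $\attr^x(0)=\sqrt{\lambda}$ is still covered by the same moving-equilibrium argument, which matters because \autoref{thm::largekappa}~(ii) is stated only for $\attr^x(0)\in(-\sqrt{\lambda},\sqrt{\lambda})$.
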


\begin{proof}
Define
\begin{align*}
    \xi(\nu) \coloneqq \int_0^\infty  \big(\alpha(sm_{\lambda, \alpha}) - \frac{\nu}{2}\big)^+ \dd s 
\end{align*}
Naturally, $\xi(2||\alpha||_\infty) = 0$, if furthermore $\xi(0) = \int_0^\infty (\alpha(sm_{\lambda, \alpha}))^+ \dd s \geq 1$, then the intermediate value theorem shows that there exists $\nu$ with $\xi(\nu) =1$. Assume that $\xi(0) < 1$, then
\begin{align*}
    1 > \xi(0) = \int_0^\infty (\alpha(sm_{\lambda, \alpha}))^+ \dd s = \int_0^\infty \alpha(sm_{\lambda, \alpha}) \dd s = \frac{||\alpha||_{1;[0,\infty)}}{m_{\lambda, \alpha}}.
\end{align*}
Consider the equation
\begin{align}\label{eqq::proof::sec4uncontrolled}
    \dot{x}(t) = \lambda - \alpha(t\kappa) - x(t)^2 .
\end{align}
By \autoref{thm::largekappa} (ii), \eqref{eqq::proof::sec4uncontrolled} is in Case A for all $\kappa > \frac{||\alpha||_{1;[0,\infty)}}{m_{\lambda, \alpha}}$. However, $ 1> \frac{||\alpha||_{1;[0,\infty)}}{m_{\lambda, \alpha}}$, so in particular the uncontrolled equation
\begin{align*}
    \dot{x}(t) = \lambda - \alpha(t) - x^2(t)
\end{align*}
is in Case A. This contradicts the assumption that the uncontrolled equation is in Case C.
\end{proof}

\section*{Data availability}
No data was used for the research described in the article.

\section*{Acknowledgements}
Kerstin Lux-Gottschalk and Christopher Beekmann received funding via the Irène Curie fellowship of the
Eindhoven University of Technology.

\section*{Declaration of generative AI and AI-assisted technologies in the manuscript preparation process}

During the preparation of this work, the authors used Chat GPT-5 in order to gain a better overview on some tools from adjacent fields as well as for insights on some continuity aspects of the rate function. After using this tool, the authors reviewed and edited the content as needed and take full responsibility for the content of the published article.

\printbibliography[title={References}]
\end{document}